\DeclareFontFamily{OT1}{rsfs}{}
\DeclareFontShape{OT1}{rsfs}{n}{it}{<-> rsfs10}{}
\DeclareMathAlphabet{\mathscr}{OT1}{rsfs}{n}{it}
\newcommand{\Z}{{\mathbb Z}}
\newcommand{\C}{{\mathbb C}}
\newcommand{\Q}{{\mathbb Q}}
\newcommand{\cD}{\mathscr{D}}
\newcommand{\R}{{\mathbb R}}
\newcommand{\cF}{\mathscr{F}}
\newcommand{\mcK}{\mathcal{K}}
\newcommand{\Tr}{\mathrm{Tr}}
\newcommand{\cK}{\mathscr{K}}
\newcommand{\cA}{\mathscr{A}}
\newcommand{\cL}{\mathscr{L}}
\newcommand{\cE}{\mathscr{E}}
\newcommand{\mcG}{\mathcal{G}}
\newcommand{\cT}{\mathscr{T}}
\newcommand{\fX}{\mathfrak{X}}
\newcommand{\cO}{\mathscr{O}}
\newcommand{\Ga}{\mathrm{Gal}}
\newtheorem{thm}{Theorem}[section]
\newtheorem{lemma}[thm]{Lemma}
\newtheorem{prop}[thm]{Proposition}
\newtheorem{cor}[thm]{Corollary}
\newcommand{\cG}{\mathscr{G}}
\font\brus=wncyr10.240pk scaled 1200 .240pk
\begin{document}
\title[Lengths of closed geodesics]{On the fields generated by the lengths of closed geodesics in
locally symmetric spaces}

\author[Prasad]{Gopal Prasad}
\author[Rapinchuk]{Andrei S. Rapinchuk}

\address{Department of Mathematics, University of Michigan, Ann
Arbor, MI 48109}

\email{gprasad@umich.edu}

\address{Department of Mathematics, University of Virginia,
Charlottesville, VA 22904}

\email{asr3x@virginia.edu}

\maketitle

\section{Introduction}\label{S:I}

This paper is a sequel to our paper \cite{PR6} where we introduced
the notion of weak commensurability of Zariski-dense subgroups of
semi-simple algebraic groups and used our analysis of this
relationship to answer some differential-geometric questions about 
length-commensurable and isospectral locally symmetric spaces
that have received considerable amount of attention in recent
years (cf.\,\cite{CHLR}, \cite{Re1}; a detailed survey is given in
\cite{PR0}). More precisely, given a Riemannian manifold $M,$ 
the (weak) {\it length spectrum} $L(M)$ is the set of
lengths of all closed geodesics in $M,$ and  two Riemannian
manifolds $M_1$ and $M_2$ are said to be {\it iso-length} if $L(M_1) =
L(M_2),$ and {\it length-commensurable} if $\Q \cdot L(M_1) = \Q
\cdot L(M_2).$ It was shown in \cite{PR6} that
length-commensurability has strong consequences, one of which is
that length-commensurable arithmetically defined locally symmetric
spaces of certain types are necessarily commensurable, i.e. they have a
common finite-sheeted cover. In the current paper, we will study the following
two interrelated questions:
{\it Suppose that (locally symmetric spaces) $M_1$ and $M_2$ are
\emph{not} length-commensurable. Then

\vskip2mm

\noindent {\rm (1)} \parbox[t]{12cm}{How different are the sets
$L(M_1)$ and $L(M_2)$ $($or the sets  $\Q \cdot L(M_1)$ and
$\Q \cdot L(M_2)$$)$? }

\vskip2mm

\noindent {\rm (2)} \parbox[t]{13cm}{Can $L(M_1)$ and $L(M_2)$ be
related in \emph{any} reasonable way?}}

\vskip2mm

\noindent One can ask a variety of specific questions that fit the
general framework provided by (1) and (2): for example, can
$L(M_1)$ and $L(M_2)$ differ only in a finite number of elements, in
other words, can the symmetric difference \mbox{$L(M_1) \vartriangle
L(M_2)$} be finite? Regarding (2), the relationship between
$L(M_1)$ and $L(M_2)$ that makes most sense geometrically is that of
{\it similarity}, requiring that there be a real number $\alpha > 0$
such that $$L(M_2) = \alpha \cdot L(M_1) \ \ \text{(or} \ \Q \cdot
L(M_2) = \alpha \cdot \Q \cdot L(M_1)\ \text{),}$$ which
geometrically means that $M_1$ and $M_2$ can be made iso-length
(resp., length-commensurable) by scaling the metric on one
of them. At the same time, one can consider more general
relationships with less apparent geometric context like {\it
polynomial equivalence} which means that there exist polynomials
$p(x_1, \ldots , x_s)$ and $q(y_1, \ldots , y_t)$ with real
coefficients such that for any $\lambda \in L(M_1)$ one can find
$\mu_1, \ldots , \mu_s$ $\in L(M_2)$ so that $\lambda = p(\mu_1,
\ldots , \mu_s),$ and conversely, for any $\mu \in L(M_2)$ there
exist $\lambda_1, \ldots , \lambda_t \in L(M_1)$ such that $\mu =
q(\lambda_1, \ldots , \lambda_t).$ Our results show, in particular,
that for  most arithmetically defined locally symmetric spaces the
fact that they are not length-commensurable implies that the sets
$L(M_1)$ and $L(M_2)$ differ very significantly and in fact cannot
be related by any generalized form of polynomial equivalence (cf.\,\S
\ref{S:FGeod}).


\vskip2mm

To formalize the idea of ``polynomial relations" between the weak
length spectra of Riemannian manifolds, we need to introduce some
additional notations and definitions. For a Riemannian manifold $M,$
we let $\cF(M)$ denote the subfield of $\R$ generated by the set
$L(M).$ Given two Riemannian manifolds $M_1$ and $M_2$, for $i \in
\{1 , 2\}$, we set $\cF_i = \cF(M_i)$ and consider the following
condition

\vskip3mm

\noindent $(T_i)$ \parbox[t]{13cm}{the compositum $\cF_1 \cF_2$
has infinite transcendence degree over the field $\cF_{3-i}$.}

\vskip3mm

\noindent In simple terms, the fact that condition $(T_i)$ holds
means that $L(M_i)$ contains ``many'' elements which are
algebraically independent of all the elements of $L(M_{3-i})$. The goal of  this paper is to prove 
that $(T_i)$ indeed holds for at least one $i \in \{1 , 2\}$
in various situations where $M_1$ and $M_2$ are pairwise
non-length-commensurable locally symmetric spaces. These results can
be used to prove a number of results on the nonexistence of
nontrivial dependence between the weak length spectra along the
lines indicated above - cf.\:\S \ref{S:FGeod}. Here we only mention
that $(T_i)$ implies the following condition

\vskip2mm

\noindent $(N_i)$ \parbox[t]{13cm}{$L(M_i)\not\subset A \cdot  \Q
\cdot L(M_{3-i})$ for any finite set $A$ of real numbers,}

\vskip2mm

\noindent which informally means that the weak length spectrum of
$M_i$ is ``very far''from being similar to the length spectrum of $M_{3-i}$.

\vskip2mm

To give the precise statements of our main results, we need to fix
some notations most of which will be used throughout the paper. Let
$G_1$ and $G_2$ be connected absolutely almost simple real algebraic groups such that $\mathcal{G}_i := G_i(\R)$ is noncompact for both $i = 1$ and $2$. (In \S\S 2-5 we will assume that both $G_1$ and $G_2$ are of adjoint type.) 
We fix a maximal compact subgroup $\mathcal{K}_i$ of   
$ \mathcal{G}_i,$ and let $\fX_i = \mathcal{K}_i \backslash
\mathcal{G}_i$ denote the associated symmetric space. Furthermore,
let $\Gamma_i \subset \mathcal{G}_i$ be a discrete torsion-free
Zariski-dense subgroup, and let $\fX_{\Gamma_i} := \fX_i/\Gamma_i$
be the corresponding locally symmetric space. Set $M_i =
\fX_{\Gamma_i}$ and $\cF_i = \cF(M_i)$. We also let $K_{\Gamma_i}$
denote the subfield of $\R$ generated by the traces $\mathrm{Tr}\:
\mathrm{Ad}\: \gamma$ for $\gamma \in \Gamma_i.$ Let $w_i$ be the
order of the (absolute) Weyl group of $G_i$.

Before formulating our results, we need to emphasize that the proofs
{\it assume the validity of Schanuel's conjecture} in transcendental
number theory (cf.\,\S \ref{S:FGeod}), making the results {\it
conditional.}

\vskip2mm

%
%

\noindent {\bf Theorem 1.} {\it Assume that the subgroups $\Gamma_1$
and $\Gamma_2$ are finitely generated (which is automatically the
case if these subgroups are actually lattices).

\vskip2mm

\noindent {\rm (1)} If $w_1 > w_2$ then $(T_1)$ holds;

\noindent {\rm (2)} \parbox[t]{11.5cm}{If $w_1 = w_2$ but
$K_{\Gamma_1} \not\subset K_{\Gamma_2}$ then again $(T_1)$ holds.}

\vskip2mm

\noindent Thus, unless $w_1 = w_2$ and $K_{\Gamma_1} =
K_{\Gamma_2}$, condition $(T_i)$ holds for at least one $i \in
\{1 , 2\}$.}
%
%

\vskip1mm

(We recall that $w_1 = w_2$ implies that either $G_1$ and $G_2$ are
of the same Killing-Cartan type, or one of them is of type $B_n$ and
the other of type $C_n$ for some $n \geqslant 3$.)

\vskip2mm

Much more precise results are available when the groups $\Gamma_1$
and $\Gamma_2$ are arithmetic (cf.\:\cite{PR6}, \S 1, and \S
\ref{S:AG} below regarding the notion of arithmeticity). As follows
from Theorem 1, we only need to consider the case where $w_1 = w_2$
which we will assume. Then it is convenient to divide our results
into three theorems, two of which treat the case where $G_1$ and
$G_2$ are of the same Killing-Cartan type, and the third one the
case where one of the groups is of type $B_n$ and the other of type
$C_n$ for some $n \geqslant 3$ (we note that the combination of
these three cases covers all possible situations where $w_1 = w_2$).
When $G_1$ and $G_2$ are of the same type, we consider separately
the cases where the common type is not one of the following: $A_n,$
$D_{2n+1}\, (n > 1)$ and $E_6$ and where it is one of these types.

\vskip2mm

\noindent {\bf Theorem 2.} {\it Notations as above, assume that
$G_1$ and $G_2$ are of the same Killing-Cartan type which is
different from $A_n,$ $D_{2n+1}$ $(n > 1)$ and $E_6$ and that the
subgroups $\Gamma_1$ and $\Gamma_2$ are arithmetic. Then either
$M_{1} = \fX_{\Gamma_{1}}$ and $M_{2} = \fX_{\Gamma_{2}}$ are
commensurable, hence $\Q \cdot L(M_{1}) = \Q \cdot L(M_{2})$ and
$\cF_{1} = \cF_{2},$ or conditions $(T_i)$ and $(N_i)$ hold for at
least one $i \in \{1 , 2\}$.}

\vskip2mm

(We note that $(T_i)$ and $(N_i)$ may not hold for {\it both} $i =
1$ and $2$; in fact it is possible that $L(M_1) \subset L(M_2),$ cf.\,Example 7.4.)

\vskip2mm

\noindent {\bf Theorem 3.} {\it Again, keep the above notations and
assume that the common Killing-Cartan type of $G_1$ and $G_2$ is one
of the following: $A_n,$ $D_{2n+1} (n > 1)$ or $E_6$ and that the
subgroups $\Gamma_1$ and $\Gamma_2$ are arithmetic. Assume in
addition that $K_{\Gamma_i} \neq \Q$ for at least one $i \in \{1 ,
2\}$. Then either $\Q \cdot L(M_{1}) = \Q \cdot L(M_{2}),$ hence
$\cF_{1} = \cF_{2}$ (although $M_{1}$ and $M_{2}$ may not be
commensurable), or conditions $(T_i)$ and $(N_i)$ hold for at least
one  $i \in \{1 , 2\}$.}

\vskip2mm

These results can be used in various geometric situations. To
illustrate the scope of possible applications, we will now give
explicit statements for real hyperbolic manifolds (similar results are
available for complex and quaternionic hyperbolic manifolds).

\vskip2mm

\noindent {\bf Corollary 1.} {\it
Let $M_i$ $(i = 1, 2)$ be the quotient of the real hyperbolic space
$\mathbb{H}^{d_i}$ with $d_i \neq 3$ by a torsion-free Zariski-dense
discrete subgroup $\Gamma_i$ of  $G_i(\R)$ where $G_i =
\mathrm{PSO}(d_i , 1).$

\vskip2mm

\noindent \ (i) \parbox[t]{13cm}{If $d_1 > d_2$ then conditions
$(T_1)$ and $(N_1)$ hold.}

\vskip1mm

\noindent (ii) \parbox[t]{13cm}{If $d_1 = d_2$ but $K_{\Gamma_1}
\not\subset K_{\Gamma_2}$ then again conditions $(T_1)$ and $(N_1)$
hold.}

\vskip2mm

\noindent Thus, unless $d_1 = d_2$ and $K_{\Gamma_1} =
K_{\Gamma_2}$, conditions $(T_i)$ and $(N_i)$ hold for at least one
$i \in \{1 , 2\}$.

\vskip2mm

\noindent Assume now that $d_1 = d_2 =: d$ and the subgroups
$\Gamma_1$ and $\Gamma_2$ are arithmetic.

\vskip2mm

\noindent (iii) \parbox[t]{13cm}{If $d$ is either even or is
congruent to $3(\mathrm{mod}\: 4),$  then either  $M_{1}$ and
$M_{2}$ are commensurable, hence length-commensurable and $\cF_1 =
\cF_2$, or $(T_i)$ and $(N_i)$ hold for at least one $i \in \{1 ,
2\}$.}

\vskip2mm

\noindent \ (iv) \parbox[t]{13cm}{If $d \equiv 1(\mathrm{mod}\: 4)$
and in addition $K_{\Gamma_i} \neq \Q$ for at least one $i \in \{1 ,
2\}$ then either  $M_{1}$ and $M_{2}$ are length-commensurable
(although not necessarily commensurable), or conditions $(T_i)$ and
$(N_i)$ hold for at least one $i \in \{1 , 2\}$.}

}

\vskip2mm

The results of \cite{GR} enable us to consider the situation where
one of the groups is of type $B_n$ and the other is of type $C_n$.
\vskip2mm

\noindent {\bf Theorem 4.} {\it Notations as above, assume that
$G_1$ is of type $B_n$ and $G_2$ is of type $C_n$ for some $n
\geqslant 3$ and the subgroups $\Gamma_1$ and $\Gamma_2$ are
arithmetic. Then either $(T_i)$ and $(N_i)$ hold for at least one $i
\in \{1 , 2\}$, or $$\Q \cdot L(M_2) = \lambda \cdot \Q \cdot L(M_1)
\ \ \text{where} \ \ \lambda = \sqrt{\frac{2n+2}{2n-1}}.$$}

\vskip1mm

The following interesting result holds for all types.

\vskip2mm

\noindent {\bf Theorem 5.} {\it For $i =
1, 2$, let $M_i = \fX_{\Gamma_i}$  be an arithmetically defined locally symmetric space, and assume
that $w_1 = w_2$. If $M_2$ is compact and $M_1$ is not, then
conditions $(T_1)$ and $(N_1)$ hold.}

\vskip2mm

Finally, we have the following statement which shows that the notion
of ``similarity" (or more precisely, ``length-similarity") for
arithmetically defined locally symmetric spaces is redundant.

\vskip2mm

\noindent {\bf Corollary 2.} {\it Let $M_i = \fX_{\Gamma_i}$ for $i
=1, 2$ be arithmetically defined locally symmetric spaces. Assume
that there exists $\lambda \in \R_{> 0}$ such that
$$
\Q \cdot L(M_1) = \lambda \cdot \Q \cdot L(M_2).
$$
Then

\vskip1mm

\noindent (i) \parbox[t]{13cm}{if $G_1$ and $G_2$ are of the same
type which is different from $A_n,$ $D_{2n+1} (n > 1)$ and $E_6$,
then $M_1$ and $M_2$ are commensurable, hence length-commensurable;}

\vskip1mm

\noindent (ii) \parbox[t]{13cm}{if $G_1$ and $G_2$ are of the same
type which is one of the following: $A_n,$ $D_{2n+1} (n > 1)$ or
$E_6$ then, provided that $K_{\Gamma_i} \neq \Q$ for at least one $i
\in \{1 , 2\},$ the spaces $M_1$ and $M_2$ are length-commensurable
(although not necessarily commensurable).}}

\vskip2mm

(See Corollary 7.11 for a more detailed statement.)

\vskip2mm

While the geometric results in \cite{PR6} were derived from an 
analysis of the relationship between Zariski-dense subgroups of
semi-simple algebraic groups called {\it weak commensurability},
the results described above require a more general and technical
version of this notion which we call {\it weak containment.} We
recall that given two semi-simple groups $G_1$ and $G_2$ over a
field $F$ and Zariski-dense subgroups $\Gamma_i \subset G_i(F)$ for
$i = 1, 2,$ two semi-simple elements $\gamma_i \in \Gamma_i$ are
weakly commensurable if there exist maximal $F$-tori $T_i$ of $G_i$
such that $\gamma_i \in T_i(F),$ and for some characters $\chi_i$ of
$T_i$ (defined over an algebraic closure $\overline{F}$ of $F$), we
have
$$
\chi_1(\gamma_1) = \chi_2(\gamma_2) \neq 1.
$$
Furthermore, $\Gamma_1$ and $\Gamma_2$ are weakly commensurable if
every semi-simple element $\gamma_1 \in \Gamma_1$ of infinite order
is weakly commensurable to some semi-simple element $\gamma_2 \in
\Gamma_2$ of infinite order, and vice versa.
The following definition provides a generalization
of the notion of weak commensurability which is adequate for our
purposes.

\vskip2mm

\noindent {\bf Definition 1.} Notations as above, semi-simple
elements $\gamma^{(1)}_1, \ldots , \gamma^{(1)}_{m_1} \in \Gamma_1$
are {\it weakly contained} in $\Gamma_2$ if there are semi-simple
elements $\gamma^{(2)}_1, \ldots , \gamma^{(2)}_{m_2} \in \Gamma_2$
such that
$$
\chi^{(1)}_1(\gamma^{(1)}_1) \cdots
\chi^{(1)}_{m_1}(\gamma^{(1)}_{m_1}) \ = \
\chi^{(2)}_1(\gamma^{(2)}_1) \cdots
\chi^{(2)}_{m_2}(\gamma^{(2)}_{m_2}) \ \neq \  1.
$$
for some maximal $F$-tori $T^{(j)}_k$ of $G_j$ containing
$\gamma^{(j)}_k$ and some characters $\chi^{(j)}_k$ of  
$T^{(j)}_k$ for $j \in \{1 , 2\}$ and $k \leqslant m_j.$

\vskip1mm

(It is easy to see that this property is independent of the choice
of the maximal tori containing the elements in question.)

\vskip2mm

We also need the following.

\vskip.7mm

\noindent {\bf Definition 2.} (a) Let $T_1, \ldots , T_m$ be a
finite collection of algebraic tori defined over a field $K,$ and
for each $i \leqslant m,$ let $\gamma_i \in T_i(K).$ The elements
$\gamma_1, \ldots , \gamma_m$ are called {\it multiplicatively
independent} if a relation of the form
$$
\chi_1(\gamma_1) \cdots \chi_m(\gamma_m) = 1,
$$
where $\chi_j \in X(T_j),$ implies that
$$
\chi_1(\gamma_1) = \cdots = \chi_m(\gamma_m) = 1.
$$

\vskip1mm

(b) Let $G$ be a semi-simple algebraic $F$-group. Semi-simple
elements $\gamma_1, \ldots , \gamma_m \in G(F)$ are called {\it
multiplicatively independent} if for some (equivalently, any) choice
of maximal $F$-tori $T_i$ of $G$ such that $\gamma_i \in T_i(F)$ for
$i \leqslant m,$ these elements are multiplicatively independent in the sense of 
part (a).

\vskip2mm

We are now in a position to give a definition that plays the central
role in the paper.

\vskip1mm

\noindent {\bf Definition 3.} We say that $\Gamma_1$ and $\Gamma_2$
as above satisfy {\it property $(C_i)$}, where $i=$ 1 or 2, if
for any $m \geqslant 1$ there exist  semi-simple elements $\gamma_1,
\ldots , \gamma_m \in \Gamma_i$ of infinite order  that are
multiplicatively independent and are {\it not} weakly contained in
$\Gamma_{3-i}.$

\vskip2mm

Our main effort is focused on developing a series of conditions that
guarantee the fact that $\Gamma_1$ and $\Gamma_2$ satisfies $(C_i)$
for at least one $i \in \{1 , 2\}$ (in fact, typically we are
able to pin down the $i$). Before formulating a sample result, we
would like to note that the notion of the trace subfield (field
of definition) $K_{\Gamma_i} \subset F$
makes sense for {\it any} field $F$ and not only for $F = \R.$

\vskip1mm

\noindent {\bf Theorem \ref{T:F1}.} {\it Assume that $\Gamma_1$ and
$\Gamma_2$ are finitely generated.

\vskip2mm

\noindent \ (i) If $w_1 > w_2$ then condition $(C_1)$ holds;

\vskip1mm

\noindent (ii) If $w_1 = w_2$ but $K_{\Gamma_1} \not\subset
K_{\Gamma_2}$ then again $(C_1)$ holds.

\vskip2mm

\noindent Thus, unless $w_1 = w_2$ and $K_{\Gamma_1} =
K_{\Gamma_2}$, condition $(C_i)$ holds for at least one $i \in
\{1 , 2\}$.}

\vskip2mm

We prove much more precise results in the case where the $\Gamma_i$
are arithmetic. The statements however are somewhat  technical,
and we refer the reader to \S  \ref{S:AG} for their precise
formulations.

\vskip1.5mm

The reader may have already noticed similarities in the statements
of Theorem 1 and  Theorem \ref{T:F1}. The same similarities exist
also between the ``geometric'' Theorems 2-4 and the corresponding
``algebraic'' results in \S \ref{S:AG}. The precise connection
between ``algebra'' and ``geometry'' is given by  Proposition
\ref{P:LG1} which has the following consequence (Corollary \ref{C:LG1}):

\vskip2mm

\noindent {\it If $\fX_{\Gamma_1}$ and $\fX_{\Gamma_2}$ are locally
symmetric spaces as above with finitely generated fundamental groups
$\Gamma_1$ and $\Gamma_2$, then the fact that these groups satisfy
property $(C_i)$ for some $i \in \{1 , 2\}$ implies that the locally
symmetric spaces satisfy conditions $(T_i)$ and $(N_i)$ for the same
$i$.}

\vskip2mm

It should be noted that the
proof of Proposition \ref{P:LG1} assumes the truth of Schanuel's
conjecture, and in fact it is the only place in the paper where the
latter is used. In conjunction with the results of \S \ref{S:AG},
this facts provides a series of rather restrictive conditions on the
arithmetic groups $\Gamma_1$ and  $\Gamma_2$ in case $(T_i)$ fails
for both $i = 1$ and $2$. Eventually, these condition enable
us to prove that if $G_1$ and $G_2$ are of the same type which is
different from $A_n,$ $D_{2n+1} (n > 1)$ or $E_6$ then $G_1 \simeq
G_2$ over $K := K_{\Gamma_{1}} = K_{\Gamma_{2}}$ and hence the
subgroups $\Gamma_{1}$ and $\Gamma_{2}$ are commensurable in the
appropriate sense (viz., up to an isomorphism between $G_1$ and
$G_2$), yielding the commensurability of the locally symmetric
spaces $\fX_{\Gamma_{1}}$ and $\fX_{\Gamma_{2}}$ (cf.\:Theorem
2). If $G_1$ and $G_2$ are of the same type which is one of the
following $A_n,$ $D_{2n+1} (n > 1)$ or $E_6$, then $G_1$ and $G_2$
may not be $K$-isomorphic, but using the results from \cite{PR6}, \S
9, and \cite{PR7}, we show that (under some minor restrictions)
these groups necessarily have equivalent systems of maximal $K$-tori
(see \S \ref{S:ADE} for the precise definition) making the
corresponding locally symmetric spaces $\fX_{\Gamma_1}$ and
$\fX_{\Gamma_2}$ length-commensurable, and thereby proving
Theorem 3. To prove Theorem 4, we use the results of \cite{GR} that
describe when two absolutely almost simple $K$-groups, one of
type $B_n$ and the other of type $C_n$ $(n \geqslant 3)$, have the
same isomorphism classes of maximal $K$-tori.

\vskip2mm

\noindent {\bf Notations.} For a field $K$, $K_{\rm{sep}}$ will denote a separable closure.  
Given a (finitely generated) field $K$ of
characteristic zero, we let $V^K$ denote the set of (equivalence
classes) of nontrivial valuations $v$ of $K$ with locally compact
completion $K_v.$ If $v \in V^K$ is nonarchimedean, then $K_v$ is a
finite extension of the $p$-adic field $\Q_p$ for some $p$; in the
sequel this prime $p$ will be denoted by $p_v$.  
Given a subset $V$ of  $V^K$ consisting of nonarchimedean
valuations, we set $\Pi_V = \{ p_v \: \vert \: v \in V \}.$

\vskip2mm

\noindent {\bf Acknowledgements.} We thank Skip Garibaldi for
proving in \cite{Gar} Theorem \ref{T:D2n-G} which, in particular,
enabled us to include type $D_4$ in Theorem \ref{T:ArG2}. We also
thank Sai-Kee Yeung for a discussion of his paper \cite{SKY}. Both
authors were partially supported by the NSF (grants DMS-1001748 and
DMS-0965758) and the Humboldt Foundation. During the preparation of 
this paper, the second-named author was
visiting the Mathematics Department of the University of Michigan as
a Gehring Professor; the hospitality and generous support of this
institution are thankfully acknowledged.

\section{Weak containment}\label{S:WC}

The goal of this section is to derive several consequences of the
relation of weak containment (see Definition 1 of the Introduction)
that will be needed later. We begin with some definitions and
results for algebraic tori. Given a torus $T$ defined over a field
$K,$ we let $K_T$ denote its (minimal) splitting field over $K$
(contained in a fixed algebraic closure $\overline{K}$ of $K$). The
following definition goes back to \cite{PR3}.

\vskip2mm

\noindent {\bf Definition 4.} A $K$-torus $T$ is called $K$-{\it
irreducible} (or, {\it irreducible over} $K$) if it does not contain any proper $K$-subtori.

\vskip1mm

Recall that $T$ is $K$-irreducible if and only if $X(T) \otimes_{\Z} \Q$
is an irreducible $\Ga(K_T/K)$-module, cf.\,\cite{PR3}, Proposition
1. Now, let $G$ be an absolutely almost simple algebraic $K$-group.
For a maximal torus $T$ of $G,$ we let $\Phi = \Phi(G , T)$ denote
the corresponding root system, and let $\mathrm{Aut}(\Phi)$ be the
automorphism group of $\Phi$. As usual, the Weyl group $W(\Phi) \subset
\mathrm{Aut}(\Phi)$ will be identified with the Weyl group $W(G ,
T)$ of $G$ relative to $T.$ If $T$ is defined over a field extension
$L$ of $K,$ and $L_T$ is the splitting field of $T$ over $L$ in an
algebraic closure of the latter, then there is a natural
injective homomorphism
$$
\theta_T \colon \Ga(L_T/L) \to \mathrm{Aut}(\Phi).
$$
Since $W(\Phi)$ acts absolutely irreducibly on $X(T) \otimes_{\Z} \Q$,
we conclude that a maximal $L$-torus $T$ of $G$ such
that $\theta_T(\Ga(L_T/L)) \supset W(G , T)$ is automatically
$L$-irreducible. (We also recall for the convenience of further
reference that if $G$ is of inner type over $L$ then
$\theta_T(\Ga(L_T/L)) \subset W(G , T),$ cf.\,\cite{PR6}, Lemma 4.1.)

\vskip2mm

\noindent {\bf Definition 5.} Let $T_1, \ldots , T_m$ be $K$-tori.
We say that these tori are {\it independent} (over $K$) if their
splitting fields $K_{T_1}, \ldots, K_{T_m}$ are linearly disjoint
over $K,$ i.e. the natural map
$$
K_{T_1} \otimes_K \cdots \otimes_K K_{T_m} \longrightarrow K_{T_1}
\cdots K_{T_m}
$$
is an isomorphism.

\vskip2mm

\begin{lemma}\label{L:P1}
Let $T_1, \ldots , T_m$ be $K$-tori, and for $i\leqslant m$, let
$\gamma_i \in T_i(K)$ be an element of infinite order. Assume that
$T_1, \ldots , T_m$ are independent, irreducible and nonsplit over
some extension $L$ of $K.$ Then the elements $\gamma_1, \ldots ,
\gamma_m$ are multiplicatively independent (see Definition 2 in
\S\ref{S:I}).
\end{lemma}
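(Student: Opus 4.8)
The plan is to reduce the assertion, using linear disjointness of the splitting fields, to two single-torus facts: that evaluation at an infinite-order point $\gamma_i\in T_i(K)$ is injective on the character lattice $X(T_i)$, and that an $L$-irreducible torus that is nonsplit over $L$ has no nonzero $\Ga(L_{T_i}/L)$-invariant characters.

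Step 1 (single torus). For each $i$ I would look at the evaluation map $\phi_i\colon X(T_i)\to L_{T_i}^{*}$, $\chi\mapsto\chi(\gamma_i)$, which makes sense because $\gamma_i\in T_i(K)\subset T_i(L_{T_i})$ and every character is defined over the (finite Galois) splitting field $L_{T_i}$; it is $\Ga(L_{T_i}/L)$-equivariant since $\gamma_i\in T_i(L)$ is Galois-fixed. I claim $\phi_i$ is injective. Its kernel is a $\Ga(L_{T_i}/L)$-submodule of $X(T_i)$, so $(\ker\phi_i)\otimes_{\Z}\Q$ is a submodule of $X(T_i)\otimes_{\Z}\Q$, which is irreducible because $T_i$ is $L$-irreducible; hence $(\ker\phi_i)\otimes_{\Z}\Q$ is $0$ or everything. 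If it were everything, $\ker\phi_i$ would have finite index, so $\chi(\gamma_i)$ would be a root of unity for every $\chi\in X(T_i)$; taking $\chi$ through a basis of coordinate characters for a splitting of $T_i$ over $L_{T_i}$ would make $\gamma_i$ of finite order, contrary to hypothesis. So $(\ker\phi_i)\otimes_{\Z}\Q=0$, and since $X(T_i)$ is torsion-free, $\ker\phi_i=0$.

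Step 2 (decoupling a relation). Suppose $\chi_1(\gamma_1)\cdots\chi_m(\gamma_m)=1$ for some $\chi_j\in X(T_j)$, and set $u_j=\chi_j(\gamma_j)\in L_{T_j}^{*}$. Fix $i$. Independence of the $T_j$ over $L$ gives $\Ga(L_{T_1}\cdots L_{T_m}/L)\cong\prod_{j}\Ga(L_{T_j}/L)$ by restriction, so for any prescribed $\sigma_0\in\Ga(L_{T_i}/L)$ there is $\sigma$ in the Galois group of the compositum with $\sigma|_{L_{T_i}}=\sigma_0$ and $\sigma|_{L_{T_j}}=\id$ for $j\neq i$. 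Applying this $\sigma$ to the relation, whose value $1$ lies in $L$ and is therefore $\sigma$-fixed, and cancelling the $\sigma$-fixed units $u_j$ for $j\neq i$, yields $\sigma_0(u_i)=u_i$; letting $\sigma_0$ vary gives $u_i\in L_{T_i}^{\Ga(L_{T_i}/L)}=L$.

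Step 3 (conclusion and the only delicate point). Now $u_i\in L$ says $(\sigma\chi_i-\chi_i)(\gamma_i)=1$ for every $\sigma\in\Ga(L_{T_i}/L)$, hence $\sigma\chi_i=\chi_i$ by Step~1, i.e. $\chi_i\in X(T_i)^{\Ga(L_{T_i}/L)}$. Tensoring with $\Q$ and using $L$-irreducibility once more, this invariant submodule is either $0$ or all of $X(T_i)\otimes_{\Z}\Q$; in the latter case $\Ga(L_{T_i}/L)$, which acts faithfully on $X(T_i)$, would act trivially on $X(T_i)\otimes_{\Z}\Q$ and hence on $X(T_i)$, forcing $L_{T_i}=L$ and contradicting the nonsplitness of $T_i$ over $L$. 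Therefore $X(T_i)^{\Ga(L_{T_i}/L)}=0$, so $\chi_i=0$ and $u_i=\chi_i(\gamma_i)=1$; as $i$ was arbitrary, all $\chi_j(\gamma_j)=1$. I do not anticipate a serious obstacle; the only recurring subtlety is the passage between a submodule of the lattice $X(T_i)$ and its image in $X(T_i)\otimes_{\Z}\Q$ (proper as a subgroup need not remain proper), handled each time by torsion-freeness of $X(T_i)$ together with $L$-irreducibility.
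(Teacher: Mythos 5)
Your proof is correct and follows essentially the same route as the paper's: use linear disjointness of the splitting fields to force each $\chi_i(\gamma_i)$ into $L$, then apply $\sigma - 1$ together with the absence of nonzero $\Ga(L_{T_i}/L)$-fixed characters (from $L$-irreducibility and nonsplitness) to conclude $\chi_i = 0$ and $\chi_i(\gamma_i)=1$. The only cosmetic difference is that where the paper invokes Zariski-density of $\langle \gamma_i \rangle$ in the $L$-irreducible torus $T_i$ to pass from $(\sigma\chi_i - \chi_i)(\gamma_i) = 1$ to $\sigma\chi_i = \chi_i$, you prove injectivity of evaluation at $\gamma_i$ directly from irreducibility of $X(T_i)\otimes_{\Z}\Q$ and the infinite order of $\gamma_i$, which is an equivalent argument.
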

\begin{proof}
Suppose there exist characters $\chi_i \in X(T_i)$ such that
$$
\chi_1(\gamma_1) \cdots \chi_m(\gamma_m) = 1.
$$
Since $\chi_i(\gamma_i) \in L^{\times}_{T_i}$ and the tori $T_1, \ldots
, T_m$ are independent over $L,$ it follows that actually
$\chi_i(\gamma_i) \in L^{\times}$ for all $i \leqslant m.$ Then for
any $\sigma \in \Ga(L_{T_i}/L)$ we have
\begin{equation}\label{E:P140}
(\sigma \chi_i - \chi_i)(\gamma_i) = 1.
\end{equation}
Being a $L$-rational element of infinite order in an $L$-irreducible
torus $T_i,$ the element $\gamma_i$ generates a Zariski-dense
subgroup of the latter, so (\ref{E:P140}) implies that $\sigma
\chi_i = \chi_i.$ But $X(T_i)$ does not have nonzero
$\Ga(L_{T_i}/L)$-fixed elements. Thus, $\chi_i = 0$ and
$\chi_i(\gamma_i) = 1.$
\end{proof}

\vskip2mm

The following lemma is crucial for unscrambling relations of weak
containment.
\begin{lemma}\label{L:unscramble}
Let $T^{(1)}_1, \ldots , T^{(1)}_{m_1}$ and $T^{(2)}_1, \ldots ,
T^{(2)}_{m_2}$ be two finite families of algebraic $K$-tori, and
suppose we are given a relation of the form
\begin{equation}\label{E:FF160}
\chi_1^{(1)}(\gamma_1^{(1)}) \cdots
\chi_{m_1}^{(1)}(\gamma_{m_1}^{(1)}) = \chi_1^{(2)}(\gamma_1^{(2)})
\cdots \chi_{m_2}^{(2)}(\gamma_{m_2}^{(2)}),
\end{equation}
where $\gamma_i^{(s)} \in T_i^{(s)}(K)$ and $\chi_i^{(s)} \in
X(T_i^{(s)}).$ Assume that  $T_1^{(1)}, \ldots , T_{m_1}^{(1)}$ are
independent, irreducible and nonsplit over $K.$ Then for every $i
\leqslant m_1$ such that the corresponding character $\chi_i^{(1)}$
in {\rm (\ref{E:FF160})} is nontrivial, there exists an integer $d_i
> 0$ with the following property:

\vskip2mm

\ \ \ \ \ \ \parbox[t]{12.5cm}{For any $\delta_i^{(1)} \in
d_iX(T_i^{(1)})$ there are characters $\delta_j^{(2)} \in
X(T_j^{(2)})$ for $j \leqslant {m_2}$ for which
\begin{equation}\label{E:unscramble}
\delta_i^{(1)}(\gamma^{(1)}_i) = \delta_1^{(2)}(\gamma^{(2)}_1)
\cdots \delta_{m_2}^{(2)}(\gamma^{(2)}_{m_2}).
\end{equation}}

\vskip2mm

\noindent In addition, if $\gamma_i^{(1)}$ has infinite order  and
$\delta_i^{(1)} \neq 0$ then the common value in {\rm
(\ref{E:unscramble})} is $\neq 1.$
\end{lemma}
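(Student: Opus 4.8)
The plan is to exploit the structure imposed by Lemma~\ref{L:P1} together with a Galois-averaging argument. First I would observe that, since the tori $T_1^{(1)}, \ldots, T_{m_1}^{(1)}$ are independent, irreducible and nonsplit over $K$, Lemma~\ref{L:P1} applies verbatim: the elements $\gamma_1^{(1)}, \ldots, \gamma_{m_1}^{(1)}$ are multiplicatively independent in the sense of Definition~2. The key consequence I want to extract from the \emph{proof} of Lemma~\ref{L:P1} (not just its statement) is the intermediate fact that for any character $\psi \in X(T_i^{(1)})$, the value $\psi(\gamma_i^{(1)})$ lies in $K^\times$ as soon as it arises from a relation in which all other factors lie in $K^\times$; more precisely, the argument shows that modifying a character by $\sigma\psi - \psi$ kills $\gamma_i^{(1)}$, so that $\psi \mapsto \psi(\gamma_i^{(1)})$ factors through the coinvariants. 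This is what produces the integer $d_i$: it should be taken so that $d_i X(T_i^{(1)})$ lands inside the subgroup of $X(T_i^{(1)})$ on which the Galois action behaves predictably — concretely, $d_i$ can be chosen as the exponent of $H^1$ or of the torsion in the relevant coinvariant module, or simply the order of $\mathrm{Aut}(\Phi)$, so that for $\delta_i^{(1)} \in d_i X(T_i^{(1)})$ the element $\sum_{\sigma} \sigma \delta_i^{(1)}$ is a well-defined $K$-rational character differing from $|\Ga| \cdot \delta_i^{(1)}$ by something vanishing on $\gamma_i^{(1)}$.

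Next I would isolate the $i$-th factor from (\ref{E:FF160}). Applying a Galois automorphism $\sigma \in \Ga(\overline{K}/K)$ to the whole relation (\ref{E:FF160}) and using that each $\gamma_k^{(s)}$ is $K$-rational, $\sigma$ permutes the conjugates $\sigma\chi_k^{(s)}$; multiplying the resulting relations over a suitable set of $\sigma$'s and dividing by the original relation, the factors coming from the independent family $T_1^{(1)}, \ldots, T_{m_1}^{(1)}$ can be arranged to cancel except on the $i$-th slot, by linear disjointness of the splitting fields. This yields a relation of the shape $\delta_i^{(1)}(\gamma_i^{(1)}) = (\text{product over the second family})$ where $\delta_i^{(1)}$ is a $\Z$-linear combination of conjugates of $\chi_i^{(1)}$; by $K$-irreducibility of $T_i^{(1)}$ and the fact that $\chi_i^{(1)}$ is nontrivial, these conjugates span a finite-index subgroup of $X(T_i^{(1)})$, so after multiplying by a further bounded integer we can realize \emph{any} element of $d_i X(T_i^{(1)})$ as such a combination. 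This gives (\ref{E:unscramble}), with the characters $\delta_j^{(2)}$ on the right simply being the corresponding $\Z$-linear combinations of conjugates of the $\chi_j^{(2)}$ (which are automatically in $X(T_j^{(2)})$).

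Finally, for the last assertion, suppose $\gamma_i^{(1)}$ has infinite order and $\delta_i^{(1)} \neq 0$, but the common value in (\ref{E:unscramble}) equals $1$. Then $\delta_i^{(1)}(\gamma_i^{(1)}) = 1$. Since $\gamma_i^{(1)}$ generates a Zariski-dense subgroup of the $L$-irreducible (indeed $K$-irreducible) torus $T_i^{(1)}$ — here again I use that an element of infinite order in an irreducible torus is Zariski-dense, exactly as in the proof of Lemma~\ref{L:P1} — the character $\delta_i^{(1)}$ must vanish identically on $T_i^{(1)}$, i.e. $\delta_i^{(1)} = 0$, contradicting our assumption.

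\textbf{Main obstacle.} The delicate point is the bookkeeping in the second step: one must choose the finite set of Galois automorphisms and the integer $d_i$ so that (a) the contributions from the \emph{first} family genuinely cancel down to a single slot — this is where independence (linear disjointness of the $K_{T_k^{(1)}}$) is essential, since it lets one move the Galois action on one torus without disturbing the others — and (b) the resulting $\Z$-span of conjugates of $\chi_i^{(1)}$ covers all of $d_i X(T_i^{(1)})$, which relies on $X(T_i^{(1)})\otimes\Q$ being an irreducible $\Ga(K_{T_i^{(1)}}/K)$-module. Making $d_i$ uniform over all target characters $\delta_i^{(1)}$ (rather than depending on $\delta_i^{(1)}$) is the part that needs care, but it follows from finiteness of the index of the span of the conjugates together with finiteness of any relevant torsion.
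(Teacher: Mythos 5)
Your proposal is correct and follows essentially the same route as the paper's proof: use the linear disjointness of the splitting fields to produce $\sigma\in\Ga(\overline{K}/K)$ moving $\chi_i^{(1)}$ but fixing the other $\chi_j^{(1)}$, apply $\sigma-1$ to (\ref{E:FF160}) to isolate the $i$-th slot, let $d_i$ be determined by the finite index of the $\Z[\Ga]$-span of the resulting character (transporting the same combination to the right-hand side to get the $\delta_j^{(2)}$), and settle the last assertion by Zariski-density of $\langle\gamma_i^{(1)}\rangle$ in the $K$-irreducible torus. One small refinement: the realizable $\delta_i^{(1)}$ are $\Z[\Ga]$-combinations of $\sigma\chi_i^{(1)}-\chi_i^{(1)}$ rather than of $\chi_i^{(1)}$ itself, so the finite-index claim needs the nonsplitness hypothesis (no nonzero Galois-fixed characters) to guarantee this difference is nonzero, and $d_i$ is the index of that span rather than a uniform quantity such as the order of $\mathrm{Aut}(\Phi)$.
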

\begin{proof}
As the tori $T^{(1)}_1, \ldots , T^{(1)}_{m_1}$ are independent over
$K,$  we have the natural isomorphism
\begin{equation}\label{E:FF162}
\Ga(K_{T_1^{(1)}} \cdots K_{T_{m_1}^{(1)}}/K) \simeq
\Ga(K_{T_1^{(1)}}/K) \times \cdots \times \Ga(K_{T_{m_1}^{(1)}}/K).
\end{equation}
Since $T_i^{(1)}$ is $K$-irreducible and nonsplit, $X(T_i^{(1)})$ does not
contain any nontrivial $\Ga(K_{T_i^{(1)}}/K)$-fixed elements. So, it follows from
(\ref{E:FF162}) that there exists $\sigma \in \Ga(\overline{K}/K)$
such that $\sigma \chi^{(1)}_i \neq \chi^{(1)}_i$ but $\sigma
\chi^{(1)}_j = \chi^{(1)}_j$ for $j \neq i.$ Applying $\sigma - 1$
to (\ref{E:FF160}), we obtain
\begin{equation}\label{E:FF163}
\mu_i^{(1)}(\gamma_i^{(1)}) = \mu_1^{(2)}(\gamma_1^{(2)}) \cdots
\mu_{m_2}^{(2)}(\gamma_{m_2}^{(2)}),
\end{equation}
where $\mu_j^{(s)} = \sigma \chi_j^{(s)} - \chi_j^{(s)},$ noting
that $\mu_i^{(1)} \neq 0.$ Again, since $T_i^{(1)}$ is
$K$-irreducible and nonsplit, the $\Ga(\overline{K}/K)$-submodule of
$X(T^{(1)}_i)$ generated by $\mu^{(1)}_i$ has finite index, hence it  
contains $d_iX(T_i^{(1)})$ for some integer $d_i > 0.$ Then any
$\delta_i^{(1)} \in d_i X(T_i^{(1)})$ can be written as
$$
\delta_i^{(1)} = \sum n_{\sigma} \sigma(\mu_i^{(1)})\  \ \text{for\
some} \ \ \sigma \in \Ga(\overline{K}/K) \ \ \text{and}\  \
n_{\sigma} \in \Z.
$$
So, using (\ref{E:FF163}) we obtain that
$$
\delta_i^{(1)}(\gamma_i^{(1)}) = \delta_1^{(2)}(\gamma_1^{(2)})
\cdots \delta_{m_2}^{(2)}(\gamma_{m_2}^{(2)})
$$
with $\delta_j^{(2)} = \sum n_{\sigma} \sigma(\mu_j^{(2)})$ for $j
\leqslant {m_2}.$ Finally, if $\gamma_i^{(1)}$ is of infinite
order then it generates a Zariski-dense subgroup of the
$K$-irreducible torus $T_i^{(1)},$ and therefore
$\delta_i^{(1)}(\gamma_i^{(1)}) \neq 1$ for any nonzero
$\delta_i^{(1)} \in X(T_i^{(1)}).$
\end{proof}

\vskip3mm

The following theorem is an adaptation of a part of the Isogeny
Theorem (Theorem 4.2) of \cite{PR6} suitable for our purposes.
\begin{thm}\label{T:10}
Let $T^{(1)}_1, \ldots , T^{(1)}_{m_1}$ and $T^{(2)}_1, \ldots ,
T^{(2)}_{m_2}$ be two finite families of algebraic $K$-tori, and
suppose we are given a relation of the form
\begin{equation}\label{E:FF380}
\chi^{(1)}_1(\gamma^{(1)}_1) \cdots
\chi^{(1)}_{m_1}(\gamma^{(1)}_{m_1}) = \chi^{(2)}_1(\gamma^{(2)}_1)
\cdots \chi^{(2)}_{m_2}(\gamma^{(2)}_{m_2}),
\end{equation}
where $\gamma^{(s)}_i \in T^{(s)}_i(K)$ and $\chi^{(s)}_i \in
X(T^{(s)}_i).$ Assume that the tori $T^{(1)}_1, \ldots ,
T^{(1)}_{m_1}$ are independent, irreducible and nonsplit over $K,$
and that the elements $\gamma^{(1)}_1, \ldots , \gamma^{(1)}_{m_1}$
all have infinite order. Then for each $i \leqslant {m_1}$ such that
the corresponding character $\chi_i^{(1)}$ in {\rm (\ref{E:FF380})}
is nontrivial, there exists a surjective $K$-homomorphism $T_j^{(2)}
\to T_i^{(1)}$ for some $j \leqslant m_2$, hence, in particular, 
$K_{T_i^{(1)}} \subset K_{T_j^{(2)}}.$ Moreover, if all the tori are
of the same dimension, the above homomorphism is an isogeny and
$K_{T_i^{(1)}} = K_{T_j^{(2)}}.$
\end{thm}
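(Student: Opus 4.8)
\textbf{Proof plan for Theorem \ref{T:10}.}
The plan is to reduce the statement to Lemma \ref{L:unscramble} together with the Isogeny Theorem (Theorem 4.2) of \cite{PR6}. Fix an index $i \leqslant m_1$ with $\chi_i^{(1)}$ nontrivial. By Lemma \ref{L:unscramble} there is an integer $d_i > 0$ such that every $\delta_i^{(1)} \in d_i X(T_i^{(1)})$ satisfies a relation of the form \eqref{E:unscramble}, and moreover, since $\gamma_i^{(1)}$ has infinite order, the common value in \eqref{E:unscramble} is $\neq 1$ whenever $\delta_i^{(1)} \neq 0$. In particular, pick $\delta_i^{(1)} \in d_i X(T_i^{(1)})$ nontrivial; then we obtain a nontrivial relation $\delta_i^{(1)}(\gamma_i^{(1)}) = \delta_1^{(2)}(\gamma_1^{(2)}) \cdots \delta_{m_2}^{(2)}(\gamma_{m_2}^{(2)})$ involving a single element $\gamma_i^{(1)}$ on the left. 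This is exactly the kind of one-sided relation to which the Isogeny Theorem applies.

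The next step is to invoke the Isogeny Theorem of \cite{PR6} in the following form: given semi-simple elements lying in tori, a nontrivial multiplicative relation between a character value on a single $\gamma_i^{(1)}$ (living in the irreducible nonsplit torus $T_i^{(1)}$) and a product of character values on the $\gamma_j^{(2)}$ forces the splitting field $K_{T_i^{(1)}}$ to embed into the compositum of the $K_{T_j^{(2)}}$, and in fact, because $T_i^{(1)}$ is $K$-irreducible, it forces a surjective $K$-homomorphism $T_j^{(2)} \to T_i^{(1)}$ for at least one $j$. The mechanism is that the Galois module $X(T_i^{(1)}) \otimes \Q$ is irreducible, so the relation cannot be "spread out" among several of the $T_j^{(2)}$: by applying elements of $\Ga(\overline{K}/K)$ and using irreducibility as in the proof of Lemma \ref{L:unscramble}, the span of the relevant characters pulls back a full finite-index sublattice of $X(T_i^{(1)})$ from some single $X(T_j^{(2)})$, which dualizes to the asserted surjection of tori. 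The inclusion $K_{T_i^{(1)}} \subset K_{T_j^{(2)}}$ is then immediate, since a surjection of tori induces a surjection on splitting fields (a quotient torus splits over any field over which the original torus splits).

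For the last assertion, suppose all the tori have the same dimension. Then the surjective $K$-homomorphism $T_j^{(2)} \to T_i^{(1)}$ is a surjection between tori of equal dimension, hence has finite kernel, i.e. is an isogeny; consequently the dual map on character lattices is injective with finite cokernel, the induced map on $\overline{K}$-points is surjective with finite kernel, and the splitting fields coincide, $K_{T_i^{(1)}} = K_{T_j^{(2)}}$. The main obstacle I anticipate is not in these formal reductions but in correctly extracting from the Isogeny Theorem of \cite{PR6} the precise one-sided statement needed here — the cited theorem is phrased for weak commensurability of pairs of elements, so some care is required to see that its proof yields a surjection $T_j^{(2)} \to T_i^{(1)}$ (rather than merely an isogeny between tori of the same dimension) under the weaker hypothesis that only the left-hand family is independent, irreducible, and nonsplit; this is exactly the adaptation the theorem advertises, and verifying it amounts to rerunning the Galois-module argument of Lemma \ref{L:unscramble} in the presence of the rational points and keeping track of surjectivity.
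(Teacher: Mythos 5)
Your reduction step is exactly the paper's: apply Lemma \ref{L:unscramble} to the relation (\ref{E:FF380}) to produce a nontrivial one-sided relation $\delta_i^{(1)}(\gamma_i^{(1)}) = \delta_1^{(2)}(\gamma_1^{(2)}) \cdots \delta_{m_2}^{(2)}(\gamma_{m_2}^{(2)}) \neq 1$, and your treatment of the equal-dimension case at the end is also fine. The gap is in the middle, which is where all the content of the theorem lies. You cannot simply "invoke the Isogeny Theorem of \cite{PR6}": that theorem concerns a single character value on each side (weak commensurability of a pair of elements), whereas here the right-hand side is a product over several tori $T_j^{(2)}$ about which nothing (irreducibility, independence, rational structure) is assumed. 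You acknowledge this, but your proposed remedy — "rerunning the Galois-module argument of Lemma \ref{L:unscramble}" — is not the right argument, and your sketch of the mechanism asserts the conclusion rather than producing it: saying that "the span of the relevant characters pulls back a full finite-index sublattice of $X(T_i^{(1)})$ from some single $X(T_j^{(2)})$" is essentially a restatement of the desired finite-index embedding of character lattices, and the argument of Lemma \ref{L:unscramble} (apply $\sigma - 1$ for a Galois element moving only $\chi_i^{(1)}$, then use that the submodule generated by $\sigma\chi_i^{(1)}-\chi_i^{(1)}$ has finite index) does not yield it.

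What is actually needed, and what the paper does, is to rerun the proof of the Isogeny Theorem with the \emph{product} torus $T^{(2)} = T_1^{(2)} \times \cdots \times T_{m_2}^{(2)}$ on the right: (a) show that the Galois conjugates of the common value $\lambda = \delta_i^{(1)}(\gamma_i^{(1)})$ generate $K_{T_i^{(1)}}$ over $K$ (this uses that $\gamma_i^{(1)}$, having infinite order in a $K$-irreducible torus, generates a Zariski-dense subgroup), whence $K_{T_i^{(1)}} \subset K_{T^{(2)}}$ and the restriction map $\cG := \Ga(K_{T^{(2)}}/K) \to \Ga(K_{T_i^{(1)}}/K)$ is surjective; (b) define the group-ring maps $\nu_s \colon \Q[\cG] \to X(T^{(s)}) \otimes_{\Z} \Q$, $\sum n_\sigma \sigma \mapsto \sum n_\sigma \sigma(\delta^{(s)})$, and prove $\mathrm{Ker}\,\nu_2 \subset \mathrm{Ker}\,\nu_1$, again via Zariski-density of $\langle \gamma_i^{(1)} \rangle$; (c) use semi-simplicity of $\Q[\cG]$ to split the resulting surjection $\mathrm{Im}\,\nu_2 \to X(T_i^{(1)}) \otimes_{\Z} \Q$ and obtain an injective $\Z[\cG]$-homomorphism $X(T_i^{(1)}) \to X(T^{(2)})$, which dualizes to a surjective $K$-homomorphism $T^{(2)} \to T_i^{(1)}$; and only then (d) pick a factor $T_j^{(2)}$ on which this homomorphism is nontrivial and use $K$-irreducibility of $T_i^{(1)}$ to conclude that $T_j^{(2)} \to T_i^{(1)}$ is already surjective. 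Steps (a)--(c) — the splitting-field inclusion, the kernel inclusion, and the semisimplicity argument — are absent from your plan, and the passage to a single factor in (d) is where irreducibility is used, not where a sublattice is "pulled back from a single $X(T_j^{(2)})$" at the outset. As written, the proposal leaves precisely this core construction unproved.
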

\begin{proof}
Fix $i \leqslant {m_1}$ such that $\chi^{(1)}_i \neq 0.$ Applying
Lemma \ref{L:unscramble}, we see that there is a relation of the
form
$$
\delta_i^{(1)}(\gamma_i^{(1)}) = \delta_1^{(2)}(\gamma_1^{(2)})
\cdots \delta_{m_2}^{(2)}(\gamma_{m_2}^{(2)})
$$
with $\delta_i^{(1)} \in X(T_i^{(1)}),$ $\delta_i^{(1)} \neq 0,$ and
$\delta_j^{(2)} \in X(T_j^{(2)})$ for $j \leqslant m_2$.  To
simplify our notation, we set $$T^{(1)} = T_i^{(1)}, \ \  \
\gamma^{(1)} = \gamma_i^{(1)}, \ \  \ \delta^{(1)} =
\delta_i^{(1)}$$ and
$$T^{(2)} = T_1^{(2)} \times \cdots \times T_{m_2}^{(2)}, \  \ \
\gamma^{(2)} = (\gamma_1^{(2)}, \ldots , \gamma_{m_2}^{(2)}), \ \ \
\delta^{(2)} = (\delta_1^{(2)}, \ldots , \delta_{m_2}^{(2)}).$$ Then
$$
\delta^{(1)}(\gamma^{(1)}) = \delta^{(2)}(\gamma^{(2)}) =: \lambda.
$$
First, we will show that the Galois conjugates $\sigma(\lambda)$ for
$\sigma \in \Ga(K_{T^{(1)}}/K)$ generate $K_{T^{(1)}}$ over $K.$
Indeed, suppose $\tau \in \Ga(K_{T^{(1)}}/K)$ fixes all the
$\sigma(\lambda)$'s. Then for any $\sigma \in \Ga(K_{T^{(1)}}/K)$ we
have
$$
(\tau\sigma(\delta^{(1)}))(\gamma^{(1)}) = \tau(\sigma(\lambda)) =
\sigma(\lambda) = (\sigma(\delta^{(1)}))(\gamma^{(1)}).
$$
Since $T^{(1)}$ is  $K$-irreducible, the element $\gamma^{(1)} \in
T^{(1)}(K),$ being of infinite order, generates a Zariski-dense
subgroup of $T^{(1)}.$ Hence, we conclude that
$\tau(\sigma(\delta^{(1)})) = \sigma(\delta^{(1)})$ for all $\sigma
\in \Ga(K_{T^{(1)}}/K).$ But the elements $\sigma(\delta^{(1)})$
span $X(T^{(1)}) \otimes_{\Z} \Q$ as $\Q$-vector space, so $\tau =
\mathrm{id},$ and our claim follows.

Now, since all the elements $\sigma(\lambda)$ for $\sigma \in
\Ga(K_{T^{(1)}}/K)$ belong to $K_{T^{(2)}},$ we obtain the inclusion
$K_{T^{(1)}} \subset K_{T^{(2)}}$. So the restriction map

$$
\cG := \Ga(K_{T^{(2)}}/K) \longrightarrow \Ga(K_{T^{(1)}}/K)
$$
is a surjective homomorphism. In the rest of the proof, we will view $X(T^{(1)})$ as a $\cG$-module
via this homomorphism. Define $\nu_i \colon \Q[\cG] \to X(T^{(i)})
\otimes_{\Z} \Q$ by
$$
\sum_{\sigma \in \cG} n_{\sigma} \sigma \mapsto \sum_{\sigma \in
\cG} n_{\sigma} \sigma(\delta^{(i)}).
$$
We observe that  $\delta^{(1)}(\gamma^{(1)}) =
\delta^{(2)}(\gamma^{(2)})$ implies that for any $a = \sum
n_{\sigma}\sigma \in \Z[\cG],$ we have
\begin{equation}\label{E:FF333*}
\nu_2(a)(\gamma^{(2)}) = \prod
\sigma(\delta^{(2)}(\gamma^{(2)}))^{n_{\sigma}} = \prod
\sigma(\delta^{(1)}(\gamma^{(1)}))^{n_{\sigma}} =
\nu_1(a)(\gamma^{(1)}).
\end{equation}
It is now easy to show that
\begin{equation}\label{E:FF334*}
\mathrm{Ker}\: \nu_2 \subset \mathrm{Ker}\: \nu_1.
\end{equation}
Indeed, let $a \in \Z[\cG]$ be such that $\nu_2(a) = 0.$ Then it
follows from (\ref{E:FF333*}) that
$$
\nu_2(a)(\gamma^{(2)}) = 1 = \nu_1(a)(\gamma^{(1)}).
$$
As $\gamma^{(1)}$ generates a Zariski-dense subgroup of $T^{(1)},$
we conclude that $\nu_1(a) = 0,$ and (\ref{E:FF334*}) follows.

Combining (\ref{E:FF334*}) with the fact that $\delta^{(1)}$
generates $X(T^{(1)}) \otimes_{\Z} \Q$ as a $\Q[\cG]$-module, we get a
surjective homomorphism
$$
\alpha \colon \mathrm{Im}\: \nu_2 \longrightarrow \mathrm{Im}\:
\nu_1 = X(T^{(1)}) \otimes_{\Z} \Q.
$$
of $\Q[\cG]$-modules. Because of semi-simplicity of $\Q[\cG],$ there
exists an injective $\Z[\cG]$-module homomorphism $X(T^{(1)}) \to
X(T^{(2)}),$ hence a surjective $K$-homomorphism $\theta \colon
T^{(2)} \to T^{(1)}.$ Pick $j \leqslant {m_2}$ so that the
restriction $\theta \vert_{T^{(2)}_j}$ is nontrivial. As $T^{(1)}$
is $K$-irreducible, we conclude that the resulting homomorphism
$T_j^{(2)} \to T^{(1)} = T_i^{(1)}$ is surjective, hence the
inclusion $K_{T_i^{(1)}} \subset K_{T_j^{(2)}}.$ If $\dim T_j^{(2)}
= \dim T_i^{(1)}$, then the above homomorphism is an isogeny
implying that in fact $K_{T_i^{(1)}} = K_{T_j^{(2)}}.$
\end{proof}

\section{Existence of independent irreducible tori}\label{S:Ex}

In order to apply Theorem \ref{T:10} in our analysis of the weak
containment relation, we need to provide an adequate supply of
regular semi-simple elements in a given finitely generated
Zariski-dense subgroup whose centralizers yield arbitrarily large
families of independent irreducible tori. Such elements are
constructed in this section using a suitable generalization, along
the lines indicated in \cite{PR5},  of the result established in
\cite{PR4} (see also \cite{PR6}, \S 3) guaranteeing the existence,
in any Zariski-dense subgroup, of elements whose centralizers are
irreducible tori.

Let $G$ be a connected semi-simple algebraic group defined over a
field $K,$ and let $T$ be a maximal torus of $G$ defined over a
field extension $L$ of $K$. We will systematically use the
notations introduced after Definition 4 in \S\ref{S:WC},
particularly the natural homomorphism $\theta_T \colon \Ga(L_T/L)
\to \mathrm{Aut}(\Phi(G , T)).$ For the convenience of reference, we
now quote Theorem 3.1 of \cite{PR6}.
\begin{thm}\label{T:Ex0}
Let $G$ be a connected absolutely almost simple algebraic group
defined over a finitely generated field $K$ of characteristic zero,
and $L$ be a finitely generated field containing $K.$ Let $r$ be the
number of nontrivial conjugacy classes in the (absolute) Weyl group of $G,$ and
suppose we are given $r$ inequivalent nontrivial discrete valuations
$v_1, \ldots , v_r$ of $K$ such that the completion $K_{v_i}$ is
locally compact and contains $L,$ and $G$ splits over $K_{v_i},$ for
each $i \leqslant r.$ Then there exist maximal $K_{v_i}$-tori
$T(v_i)$ of $G,$ one for each $i\leqslant r,$ with the property that
for any maximal $K$-torus $T$ of $G$ which is conjugate to $T(v_i)$
by an element of $G(K_{v_i})$ for all $i\leqslant r,$ we have
\begin{equation}\label{E:Ex25}
\theta_T(\Ga(L_T/L)) \supset W(G , T).
\end{equation}
\end{thm}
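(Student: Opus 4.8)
The statement to prove is Theorem \ref{T:Ex0}, which is quoted from \cite{PR6} as Theorem 3.1; since the paper explicitly says it is merely quoting this result for reference, the natural ``proof'' here is to recall the structure of the argument in \cite{PR6}. The plan is as follows. First I would reduce the problem to a statement about conjugacy classes in the Weyl group $W = W(G)$: the key point is that to force $\theta_T(\Ga(L_T/L)) \supset W$, it suffices to arrange, for each nontrivial conjugacy class $C$ in $W$, that the image $\theta_T(\Ga(L_T/L))$ meets $C$ — indeed a subgroup of $W$ meeting every conjugacy class of $W$ must equal $W$ (this is Jordan's theorem that a proper subgroup of a finite group cannot meet every conjugacy class). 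Since there are $r$ nontrivial conjugacy classes, we have exactly $r$ valuations $v_1, \ldots, v_r$ to play with, one assigned to each class.

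Second, for a fixed valuation $v_i$ (where $G$ splits over $K_{v_i}$ and $K_{v_i} \supset L$, $K_{v_i}$ locally compact, i.e. a finite extension of some $\Q_p$ or $\R$ or $\C$), I would invoke the local construction: over such a field one can produce a maximal torus $T(v_i)$ whose splitting behavior realizes a prescribed conjugacy class in $W$. Concretely, using the fact that for a split group over a local field $k$ the maximal $k$-tori are classified by conjugacy classes of homomorphisms $\Ga(k_{\mathrm{sep}}/k) \to W$ (via Galois cohomology $H^1(k, N/T)$ and the exact sequence relating it to $W$-conjugacy data), and that $\Ga(k_{\mathrm{sep}}/k)$ for $k$ a local field has enough cyclic quotients, one chooses $T(v_i)$ so that $\theta_{T(v_i)}(\Ga((k)_{T(v_i)}/k))$ is the cyclic subgroup generated by a representative of the $i$-th conjugacy class $C_i$. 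The role of the hypothesis that $G$ \emph{splits} over $K_{v_i}$ is exactly that it makes $G$ of inner type there, so that $\theta_{T(v_i)}$ lands in $W$ (not in the larger $\mathrm{Aut}(\Phi)$), matching the discussion after Definition 4.

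Third comes the globalization: given a maximal $K$-torus $T$ of $G$ that is $G(K_{v_i})$-conjugate to $T(v_i)$ for every $i$, I would compare the decomposition groups. For each $i$, the local splitting field $L_v \cdot K_v$-behavior of $T$ agrees with that of $T(v_i)$, so $\theta_T$ restricted to a decomposition subgroup at $v_i$ has image containing a generator of $C_i$; since $L$ embeds in $K_{v_i}$, these local decomposition groups are genuinely subgroups of $\Ga(L_T/L)$. Running over all $i$ we get that $\theta_T(\Ga(L_T/L))$ meets every nontrivial conjugacy class of $W$, and then Jordan's theorem gives $\theta_T(\Ga(L_T/L)) \supset W(G,T)$, which is \eqref{E:Ex25}. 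The main obstacle — and the technically delicate point — is the second step: constructing the local tori with precisely prescribed Galois image, which requires control of $H^1$ of the normalizer together with the surjectivity of the map to $W$-conjugacy classes over a local field, and ensuring the construction is compatible with the $L$-structure; this is precisely the content of \cite{PR6}, \S 3 (building on \cite{PR4} and \cite{PR5}), and I would cite it rather than reprove it.
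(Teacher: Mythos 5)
Your proposal is correct and follows essentially the same route as the source: the paper itself gives no proof of this statement (it is quoted verbatim from \cite{PR6}, Theorem 3.1), and the argument there is exactly the one you outline — construct, for each nontrivial conjugacy class of $W$, a local torus $T(v_i)$ over $K_{v_i}$ whose Galois image realizes that class, note that the property persists under $G(K_{v_i})$-conjugacy and that $L\subset K_{v_i}$ makes the decomposition group at $v_i$ a subgroup of $\Ga(L_T/L)$, and conclude with the Jordan-type lemma that a proper subgroup of a finite group misses some conjugacy class. Deferring the delicate local construction to \cite{PR6}, \S 3 and \cite{PR4} is consistent with how the present paper treats the result.
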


\vskip3mm

The following corollary (see Corollary 3.2 in \cite{PR6}) is derived
from Theorem~\ref{T:Ex0} using weak approximation property of the variety of
maximal tori of $G.$
\begin{cor}\label{C:Ex1}
Let $G,$ $K$ and $L$ be as in Theorem \ref{T:Ex0}, and let $V$ be a
finite set of inequivalent nontrivial  rank $1$ valuations of $K$.  
Suppose that for each $v \in V$ we are
given a maximal $K_v$-torus $T(v)$ of $G.$ Then there exists a
maximal $K$-torus $T$ of $G$ for which (\ref{E:Ex25}) holds and
which is conjugate to $T(v)$ by an element of $G(K_v),$ for all $v
\in V.$
\end{cor}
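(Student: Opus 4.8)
The plan is to deduce Corollary~\ref{C:Ex1} from Theorem~\ref{T:Ex0} by combining it with the weak approximation property for the variety $\mathfrak{T}$ of maximal tori of $G$. Recall that $\mathfrak{T}$ is a rational $K$-variety (it is birationally equivalent to $G/N_G(T_0)$ for a fixed maximal torus $T_0$, or one can use the parametrization of regular semi-simple elements by their characteristic polynomials), and in particular it satisfies weak approximation with respect to any finite set of places of $K$. So the idea is: first enlarge the given set $V$ of valuations to a set $V' = V \cup \{v_1, \ldots, v_r\}$, where $v_1, \ldots, v_r$ are $r$ additional inequivalent nontrivial rank $1$ valuations of $K$ — with $r$ the number of nontrivial conjugacy classes in the absolute Weyl group of $G$ — chosen so that $K_{v_i}$ is locally compact, contains $L$, and $G$ splits over $K_{v_i}$. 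Such valuations exist because $K$ and $L$ are finitely generated of characteristic zero: one has infinitely many valuations with locally compact completion containing $L$ (e.g. obtained by embedding $L$ into various $p$-adic fields), and the splitting condition on $G$ excludes only finitely many of them, so one can pick $r$ of them distinct from the (finitely many) valuations in $V$.

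Next, apply Theorem~\ref{T:Ex0} to the valuations $v_1, \ldots, v_r$: this produces maximal $K_{v_i}$-tori $T(v_i)$ of $G$ (one for each $i \leqslant r$) with the property that any maximal $K$-torus $T$ conjugate to each $T(v_i)$ over $K_{v_i}$ automatically satisfies $\theta_T(\Ga(L_T/L)) \supset W(G,T)$, i.e. the condition \eqref{E:Ex25}. For the valuations $v \in V$, we already have prescribed maximal $K_v$-tori $T(v)$ in the hypothesis. Now the point is that the set of maximal $K$-tori of $G$ that are $G(K_v)$-conjugate to $T(v)$, for a fixed $v$, is an open neighbourhood in $\mathfrak{T}(K_v)$ of the point corresponding to $T(v)$: conjugacy classes of maximal $K_v$-tori under $G(K_v)$ are open (this follows from the implicit function theorem / the fact that the map $G \to \mathfrak{T}$, $g \mapsto gT(v)g^{-1}$, is a smooth surjection onto a $G(K_v)$-orbit, which is therefore open in $\mathfrak{T}(K_v)$). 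Hence choosing $T$ amounts to finding a $K$-point of $\mathfrak{T}$ lying in a prescribed nonempty open subset of $\mathfrak{T}(K_v)$ for each $v \in V'$, and weak approximation for $\mathfrak{T}$ furnishes exactly such a $K$-point.

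The resulting maximal $K$-torus $T$ is, by construction, $G(K_{v_i})$-conjugate to $T(v_i)$ for all $i \leqslant r$, hence satisfies \eqref{E:Ex25} by Theorem~\ref{T:Ex0}; and it is $G(K_v)$-conjugate to the given $T(v)$ for all $v \in V$, which is the remaining assertion of the corollary. So $T$ has all the required properties.

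I expect the main technical point to be the verification that ``$G(K_v)$-conjugate to $T(v)$'' defines an \emph{open} condition on $\mathfrak{T}(K_v)$ — which is what makes weak approximation applicable — together with the auxiliary existence claim that one can find $r$ suitable valuations $v_i$ outside $V$; both are standard but deserve a sentence. Everything else is a direct combination of Theorem~\ref{T:Ex0} with the (known) weak approximation and $K$-rationality of the variety of maximal tori, so the argument is short.
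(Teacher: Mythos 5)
Your argument is correct and is essentially the paper's own route: the corollary is deduced from Theorem \ref{T:Ex0} by weak approximation on the rational variety of maximal tori, the only point beyond the locally compact case of \cite{PR6} being that openness of the ``$G(K_v)$-conjugate to $T(v)$'' condition follows from the Implicit Function Theorem, which is valid over the completion $K_v$ for any rank $1$ valuation. One small caveat: the auxiliary valuations $v_1,\dots,v_r$ exist not because the splitting condition ``excludes only finitely many'' candidates, but because (as in the proof of Theorem \ref{T:Ex2}) Proposition 1 of \cite{PR4} can be applied to a finitely generated extension of $L$ containing a splitting field of $G$, giving infinitely many suitable embeddings into $p$-adic fields avoiding $V$.
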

(In Corollary 3.2 of \cite{PR6} it was assumed that for each $v\in
V$, the completion $K_v$ is locally compact. But as the Implicit Function Theorem holds over $K_v$ 
for any rank 1 valuation $v$ of $K$, the proof of Corollary 3.2 in \cite{PR6} can be modified to prove the above more 
general result.)

\vskip3mm


\vskip3mm

We will now strengthen  the above corollary to obtain 
arbitrarily large families of irreducible independent tori.
\begin{thm}\label{T:Ex1}
Let $G$ be a connected absolutely almost simple algebraic group
defined over a finitely generated field $K$ of characteristic zero,
and $L$ be any finitely generated field extension of $K$ over which
$G$ is of inner type. Furthermore, let $V$ be a finite set of
inequivalent nontrivial rank $1$ valuations of $K$ such that any $v
\in V$ is either discrete or the corresponding completion $K_v$ is
locally compact. Fix $m \geqslant 1,$ and suppose that for each $v
\in V$ we are given $m$ maximal $K_{v}$-tori $T_1(v), \ldots ,
T_m(v)$ of $G.$ Then there exist maximal $K$-tori $T_1, \ldots ,
T_m$ of $G$ such that

\vskip2mm

\noindent \ (i) {for each $j\leqslant m,$ the torus $T_j$ satisfies
\vskip1mm
\begin{equation}
\theta_{T_j}(\Ga(L_{T_j}/L)) \supset W(G , T_j),
\end{equation}
\vskip1mm in particular, $T_j$ is $L$-irreducible; \vskip2mm
\noindent \ (ii) $T_j$ is conjugate to $T_j(v)$ by an element of
$G(K_{v})$ for all $v \in V;$}

\vskip2mm

\noindent (iii) the tori $T_1, \ldots , T_m$ are independent over
$L.$
\end{thm}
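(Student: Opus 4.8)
The strategy is to build the tori $T_1, \dots, T_m$ one at a time, using Corollary~\ref{C:Ex1} at each stage but enlarging the set of valuations so as to force the splitting fields to be linearly disjoint. First I would reduce to the case where $m$ tori $T_1(v), \dots, T_m(v)$ are prescribed at the valuations in $V$, and observe that condition (i) already guarantees each resulting torus is $L$-irreducible and nonsplit (since $\theta_{T_j}(\Ga(L_{T_j}/L)) \supset W(G,T_j)$ acts absolutely irreducibly, as recalled after Definition~4). So the real content is arranging independence, i.e. linear disjointness of $L_{T_1}, \dots, L_{T_m}$ over $L$ — equivalently, of the splitting fields $K_{T_1}, \dots, K_{T_m}$ after base change, which it suffices to control over $K$.

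The key idea for independence is the following: two Galois extensions are linearly disjoint as soon as their Galois groups admit no common nontrivial quotient; more usefully here, one can \emph{separate} them by insisting that they have different local behavior at auxiliary primes. Concretely, after constructing $T_1, \dots, T_{j-1}$ with splitting fields $K_{T_1}, \dots, K_{T_{j-1}}$ over $K$, I would choose a new discrete valuation $v_j$ of $K$ (disjoint from $V$ and from the finitely many valuations used so far) such that all of $K_{T_1}, \dots, K_{T_{j-1}}$ split completely at $v_j$ (this is possible for infinitely many $v_j$ by Chebotarev, and we may further demand $K_{v_j}$ be locally compact and $G$ split over $K_{v_j}$, containing $L$), and then prescribe at $v_j$ a maximal $K_{v_j}$-torus $T_j(v_j)$ which is \emph{anisotropic} (or at least has large splitting field over $K_{v_j}$). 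Applying Corollary~\ref{C:Ex1} with the augmented valuation set $V \cup \{v_j\}$ (and the prescribed local tori, extended arbitrarily at the other points), we get $T_j$ satisfying (i) and (ii) and additionally conjugate over $K_{v_j}$ to $T_j(v_j)$; hence $K_{T_j}$ does \emph{not} split completely at $v_j$, whereas $K_{T_1} \cdots K_{T_{j-1}}$ does. Therefore $K_{T_j} \not\subset K_{T_1} \cdots K_{T_{j-1}}$, and since the extensions are Galois one upgrades ``not contained in'' to ``linearly disjoint from'' by an induction on $j$ using the standard fact that a Galois extension $E/K$ is linearly disjoint from a Galois extension $F/K$ iff $E \cap F = K$, together with a more careful bookkeeping that at $v_j$ the torus $T_j$ has a prescribed nontrivial local splitting field while $T_1, \dots, T_{j-1}$ are totally split — which actually forces $K_{T_j} \cap (K_{T_1} \cdots K_{T_{j-1}})$ to be unramified and split at $v_j$, hence (choosing the local data at $v_j$ so that no proper subextension of $K_{T_j}/K$ splits at $v_j$) equal to $K$.

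I would implement this cleanly by strengthening the local condition at each auxiliary $v_j$: pick $T_j(v_j)$ so that $\theta_{T_j(v_j)}(\Ga((K_{v_j})_{T_j(v_j)}/K_{v_j}))$ is as large as possible — ideally containing the full Weyl group, which is possible since $G$ splits over $K_{v_j}$, so that $K_{T_j}/K$ has Galois group mapping onto $W(G,T_j)$ and its decomposition group at $v_j$ is already everything; then any subfield of $K_{T_j}$ split at $v_j$ must be $K$ itself. Combined with the requirement that $K_{T_1}, \dots, K_{T_{j-1}}$ all split completely at $v_j$, this gives $K_{T_j} \cap (K_{T_1} \cdots K_{T_{j-1}}) = K$ and, since $K_{T_1} \cdots K_{T_{j-1}}/K$ is Galois (being a compositum of Galois extensions) and $K_{T_j}/K$ is Galois, linear disjointness follows by induction: $[K_{T_1} \cdots K_{T_j} : K] = [K_{T_j}:K] \cdot [K_{T_1} \cdots K_{T_{j-1}}:K]$, which with the inductive hypothesis gives the full product formula, i.e. independence over $K$; base-changing to $L$ preserves this because (i) keeps the Galois groups as large as possible. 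Finally I would note that the whole construction is a finite procedure — only $m$ auxiliary valuations are added, and at each step there remain infinitely many admissible choices — so no obstruction to carrying it out.

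\textbf{Main obstacle.} The delicate point is the passage from ``$K_{T_j}$ is not split at $v_j$ while the earlier compositum is'' to genuine \emph{linear disjointness} (not merely $K_{T_j} \not\subset K_{T_1}\cdots K_{T_{j-1}}$): this requires controlling the intersection $K_{T_j} \cap (K_{T_1}\cdots K_{T_{j-1}})$, which is why I insist the decomposition group of $K_{T_j}/K$ at $v_j$ be the whole Galois group (so that field has \emph{no} proper subfield split at $v_j$). Arranging such a local torus $T_j(v_j)$ with maximal splitting behavior over the split group $G_{K_{v_j}}$, and checking Corollary~\ref{C:Ex1} indeed permits this prescription alongside the conditions (i)--(ii) inherited from $V$, is the technical heart of the argument; everything else is Chebotarev density plus the linear-disjointness bookkeeping.
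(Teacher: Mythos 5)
There is a genuine gap at the step you yourself flag as the technical heart. You want to choose a single auxiliary valuation $v_j$ and prescribe a local torus $T_j(v_j)$ whose local splitting field is so large that the decomposition group of $K_{T_j}/K$ at $v_j$ is the whole Galois group (``no proper subextension of $K_{T_j}/K$ splits at $v_j$''), ideally with $\theta_{T_j(v_j)}(\Ga((K_{v_j})_{T_j(v_j)}/K_{v_j}))$ containing the full Weyl group. This is impossible in general: the completions in play are finite extensions of $\Q_p$ (or $\R$), so the Galois group of the splitting field of any maximal torus over $K_{v_j}$ is solvable, whereas $\Ga(K_{T_j}/K)$ must contain $W(G,T_j)$, which is non-solvable for most types (e.g.\ type $A_n$, $n\geqslant 4$). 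This is precisely why Theorem \ref{T:Ex0} needs $r$ valuations, one for each nontrivial conjugacy class of $W$, rather than a single place. With the impossible local prescription removed, your construction only yields $K_{T_j}\not\subset K_{T_1}\cdots K_{T_{j-1}}$ (non-split at $v_j$ versus totally split), and, as you correctly note, non-containment does not give triviality of the intersection, hence not linear disjointness. A secondary issue: over a finitely generated (non-number) field $K$ there is no Chebotarev theorem as such; the paper's substitute is Proposition 1 of \cite{PR4} (embeddings of a finitely generated field into $\Q_p$ for infinitely many $p$). Also, independence is required over $L$, not over $K$, so the closing remark that base change ``preserves this'' needs an actual argument.

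The repair is essentially the paper's proof, which is shorter than your scheme: induct on $m$, and at step $m$ apply Corollary \ref{C:Ex1} not with extra valuations but with the compositum $L' = L_{T_1}\cdots L_{T_{m-1}}$ in place of $L$, obtaining $T_m$ conjugate to the prescribed $T_m(v)$ for $v\in V$ and with $\theta_{T_m}(\Ga(L'_{T_m}/L'))\supset W(G,T_m)$. Since $G$ is inner over $L$, Lemma 4.1 of \cite{PR6} gives $\theta_{T_j}(\Ga(L_{T_j}/L)) = W(G,T_j)$, so $[L_{T_j}:L]=\vert W(G,T_j)\vert$ for all $j$, while the condition over $L'$ gives $[L_{T_1}\cdots L_{T_m}:L_{T_1}\cdots L_{T_{m-1}}]=\vert W(G,T_m)\vert$; combined with the inductive hypothesis this yields $[L_{T_1}\cdots L_{T_m}:L]=\prod_j [L_{T_j}:L]$, i.e.\ independence over $L$, with no need to control intersections of splitting fields by hand.
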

\begin{proof}
We will induct on $m.$ If $m = 1$, then the existence of a maximal
$K$-torus $T = T_1$ satisfying $(i)$ and $(ii)$ is established in
Corollary \ref{C:Ex1}, while condition $(iii)$ is vacuous in this
case. Now, let $m > 1$ and assume that the maximal tori $T_1, \ldots
, T_{m-1}$ satisfying conditions $(i),$ $(ii),$ and independent over
$L,$ have already been found. Let $L'$ denote the compositum
of the fields $L_{T_1}, \ldots , L_{T_{m-1}}.$ Applying Corollary
\ref{C:Ex1} with $L'$ in place of $L,$ we find a maximal $K$-torus
$T_m$ which is conjugate to $T_m(v)$ by an element of $G(K_{v})$ for
all $v \in V$ and satisfies
\begin{equation}\label{E:P130}
\theta_{T_m}(\Ga(L'_{T_m}/L')) \supset W(G , T_m).
\end{equation}
Then $T_m$ obviously satisfies conditions $(i)$ and $(ii).$ To see
that $T_1, \ldots , T_m$ satisfy condition $(iii),$ we observe that
as the group $G$ is of inner type  over $L$, according to
\cite{PR6}, Lemma 4.1, we have
$$
\theta_{T_j}(\Ga(L_{T_j}/L)) = W(G , T_j) \ \ \text{for all} \ \ j\leqslant m.
$$
Since $L' = L_{T_1} \cdots L_{T_{m-1}},$ it follows from
(\ref{E:P130}) that
$$
[L_{T_1} \cdots L_{T_m} : L_{T_1} \cdots L_{T_{m-1}}] = \vert W(G ,
T_m) \vert.
$$
By induction hypothesis, $T_1, \ldots , T_{m-1}$ are independent
over $L,$ hence
$$
[L_{T_1} \cdots L_{T_{m-1}} : L] = \prod_{j = 1}^{m-1} [L_{T_j} : L]
= \prod_{j = 1}^{m-1} \vert W(G , T_j) \vert.
$$
Thus,
$$
[L_{T_1} \cdots L_{T_m} : L] = \prod_{j = 1}^m \vert W(G , T_j)
\vert = \prod_{j = 1}^m [L_{T_j} : L],
$$
and therefore $T_1, \ldots , T_m$ are independent over $L.$
\end{proof}

\vskip2mm

Next, we will establish a variant of Theorem \ref{T:Ex1} which
asserts the existence of regular semi-simple elements in a given
Zariski-dense subgroup whose centralizers possess properties $(i),$
$(ii)$ and $(iii)$ of the preceding theorem.
\begin{thm}\label{T:Ex2}
Let $G,$ $K$ and $L$ be as in Theorem \ref{T:Ex1} and $V$ be a finite set of inequivalent 
nontrivial discrete valuations of $K$ such that for every
$v\in V$, the completion $K_v$ of $K$ is locally compact.
Again, fix $m \geqslant 1,$ and suppose that for each $v \in V$ we
are given $m$ maximal $K_{v}$-tori $T_1(v), \ldots , T_m(v)$ of $G.$
Let $\Gamma \subset G(K)$ be a finitely generated Zariski-dense
subgroup such that the closure of the image of the diagonal map
$$
\Gamma \hookrightarrow \prod_{v \in V} G(K_{v})
$$
is open. Then there exist regular semi-simple elements $\gamma_1,
\ldots , \gamma_m \in \Gamma$ of infinite order such that the
maximal $K$-tori $T_j = Z_G(\gamma_j)^{\circ}$ for $j\leqslant m,$ satisfy \vskip2mm

\noindent \ (i) {for each $j \leqslant m$ we have \vskip1mm
\begin{equation}
\theta_{T_j}(\Ga(L_{T_j}/L)) \supset W(G , T_j)
\end{equation}
\vskip1mm \noindent (in particular, $T_j$ is $L$-irreducible, hence
$\gamma_j$ generates a Zariski-dense subgroup of $T_j$); \vskip2mm

\noindent (ii) $T_j$ is conjugate to $T_j(v)$ by an element of
$G(K_{v})$ for all $v \in V;$}

\vskip2mm

\noindent (iii) the tori $T_1, \ldots , T_m$ are independent over
$L.$
\end{thm}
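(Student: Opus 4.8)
The plan is to follow the proof of Theorem~\ref{T:Ex1} almost verbatim, the one new ingredient being the replacement of the maximal $K$-tori produced by Corollary~\ref{C:Ex1} (or by Theorem~\ref{T:Ex1}) by connected centralizers $Z_G(\gamma)^{\circ}$ of regular semi-simple elements $\gamma\in\Gamma$. As there, I would induct on $m$. Suppose $\gamma_1,\dots,\gamma_{m-1}\in\Gamma$ have already been produced so that the tori $T_j=Z_G(\gamma_j)^{\circ}$ satisfy (i), (ii) and are independent over $L$; put $L'=L_{T_1}\cdots L_{T_{m-1}}$, a finitely generated extension of $K$ over which $G$ is still of inner type. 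Applying the case $m=1$ with $L'$ in place of $L$ and with the given tori $T_m(v)$ yields $\gamma_m\in\Gamma$, regular semi-simple of infinite order, whose centralizer $T_m=Z_G(\gamma_m)^{\circ}$ is $G(K_v)$-conjugate to $T_m(v)$ for $v\in V$ and satisfies $\theta_{T_m}(\Ga(L'_{T_m}/L'))\supset W(G,T_m)$; since this image only shrinks when $L'$ is replaced by $L$, condition (i) for $T_m$ over $L$ follows, and the inner-type hypothesis together with \cite{PR6}, Lemma 4.1 turns all the relevant inclusions into equalities $[L_{T_j}:L]=|W(G,T_j)|$ and $[L'_{T_m}:L']=|W(G,T_m)|$. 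The degree computation from the proof of Theorem~\ref{T:Ex1} then gives $[L_{T_1}\cdots L_{T_m}:L]=\prod_{j\le m}[L_{T_j}:L]$, i.e.\ independence over $L$, completing the induction. So everything reduces to the case $m=1$.

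For $m=1$, write $r$ for the number of nontrivial conjugacy classes in the Weyl group of $G$. The first step is to choose $r$ pairwise inequivalent discrete valuations $v_1,\dots,v_r$ of $K$, all outside $V$, such that for each $i$: the completion $K_{v_i}$ is locally compact and contains $L$; the group $G$ splits over $K_{v_i}$; and the closure of $\Gamma$ in $G(K_{v_i})$ is open. The first requirement holds for infinitely many $v$ because $L$ is finitely generated over $K$ (cf.\ \cite{PR4}), the second for a set of $v$ of positive density by the Chebotarev density theorem applied to a finite separable extension of $K$ over which $G$ splits, and the third for all but finitely many $v$ by the strong approximation theorem for finitely generated Zariski-dense subgroups of semi-simple groups (this is the only point where finite generation of $\Gamma$ is essential); hence $r$ such valuations exist outside $V$. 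Theorem~\ref{T:Ex0} now supplies maximal $K_{v_i}$-tori $T(v_i)$ with the property that every maximal $K$-torus of $G$ which is $G(K_{v_i})$-conjugate to $T(v_i)$ for all $i$ automatically satisfies (i) over $L$.

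It remains to produce $\gamma\in\Gamma$ whose connected centralizer is conjugate to the given $T(v)$ at each $v\in V$ and to $T(v_i)$ at each $i$. Set $V^{+}=V\cup\{v_1,\dots,v_r\}$ and, for $w\in V^{+}$, let $S(w)$ denote the prescribed torus. For each $w$, the set $\Omega(w)\subset G(K_w)$ of elements $G(K_w)$-conjugate to a regular element of $S(w)(K_w)$ is open (by the implicit function theorem over $K_w$, cf.\ the remark after Corollary~\ref{C:Ex1}) and nonempty; shrinking $\Omega(v_1)$ by the further open condition that the element avoid the proper Zariski-closed subset $\{\,g\in G : g^{N}=1\,\}$, where $N$ bounds the orders of torsion elements of the finitely generated linear group $\Gamma$, still leaves it nonempty. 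By hypothesis the closure of $\Gamma$ in $\prod_{v\in V}G(K_v)$ is open, and by construction its closure in each $G(K_{v_i})$ is open; hence, by strong approximation, the closure $U$ of $\Gamma$ in $\prod_{w\in V^{+}}G(K_w)$ is open. Since $U$ contains a product $\prod_{w}U_w$ of open subgroups of the $G(K_w)$, and each $U_w$, being an open neighbourhood of $1$, meets $\Omega(w)$ in a nonempty open set (an open neighbourhood of $1$ in $S(w)(K_w)$ contains regular elements outside any fixed proper closed subvariety), we get $U\cap\prod_{w}\Omega(w)\ne\emptyset$; as $\Gamma$ is dense in $U$, there is $\gamma\in\Gamma$ with image in $\prod_w\Omega(w)$. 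Such a $\gamma$ is regular semi-simple (its image at some $w$ is conjugate to a regular element of a maximal torus, so $Z_G(\gamma)^{\circ}$ has full rank and is a torus), has infinite order (its image at $v_1$ is not in $\{g:g^{N}=1\}$, so $\gamma^{N}\ne 1$ and $\gamma$ is not torsion in $\Gamma$), has $Z_G(\gamma)^{\circ}$ conjugate to $S(w)$ over $K_w$ for every $w\in V^{+}$ — which gives (ii) for $v\in V$ — and therefore satisfies (i) by the choice of the $T(v_i)$ via Theorem~\ref{T:Ex0}.

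The main obstacle is exactly this last task: transferring the \emph{global} Galois condition (i), which Theorem~\ref{T:Ex1} secures for $K$-tori by imposing local conditions at auxiliary splitting valuations, to a torus of the form $Z_G(\gamma)^{\circ}$ with $\gamma\in\Gamma$. The hypothesis only controls $\Gamma$ at $V$, so one must first locate auxiliary valuations $v_i$ — where $G$ splits and $K_{v_i}\supset L$ — at which $\Gamma$ is, on its own, dense enough; this forces an appeal to strong approximation (equivalently, to the element version of Theorem~\ref{T:Ex0}/Corollary~\ref{C:Ex1} from \cite{PR6}, \S 3, suitably extended to rank-$1$ valuations). Once these $v_i$ are available, conditions (i) and (ii) follow from Theorem~\ref{T:Ex0} and the approximation argument, and (iii) is pure degree bookkeeping as in the proof of Theorem~\ref{T:Ex1}.
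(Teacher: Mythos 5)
Your overall skeleton (induction on $m$, reduction to $m=1$ by passing to $L'=L_{T_1}\cdots L_{T_{m-1}}$, auxiliary splitting valuations feeding into Theorem \ref{T:Ex0}, solid open sets of regular conjugates, and the degree bookkeeping for independence) matches the paper's proof. But there is a genuine gap at the crucial step: you assert that ``by strong approximation, the closure $U$ of $\Gamma$ in $\prod_{w\in V^{+}}G(K_w)$ is open,'' and earlier that the closure of $\Gamma$ in $G(K_{v})$ is open for all but finitely many $v$ ``by the strong approximation theorem for finitely generated Zariski-dense subgroups.'' Neither appeal is available here. The base field $K$ is only a finitely generated field of characteristic zero (not a global field), $G$ is not assumed simply connected (in the application it is adjoint), and the hypothesis controls $\Gamma$ only at the prescribed finite set $V$; no form of strong approximation covers this situation. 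More importantly, even granting openness of the closure in each factor $G(K_w)$ separately, openness of the closure in the \emph{product} does not follow: the closure could be a correlated (graph-type) subgroup if the local factors share common quotients. This product-openness is exactly the nontrivial point, and the paper devotes Lemma \ref{L:Ex1} to it: the auxiliary valuations are produced via Proposition 1 of \cite{PR4} as pullbacks of $p_i$-adic valuations under embeddings $\iota_{p_i}\colon L\hookrightarrow \Q_{p_i}$ with $\iota_{p_i}(R)\subset\Z_{p_i}$, so that the completions are literally $\Q_{p_i}$, the residue characteristics $p_i$ are pairwise distinct and avoid $\Pi_V$, and $\Gamma$ is bounded (hence nondiscrete with open closure) at these places; the openness of the closure in the enlarged product is then forced by a pro-$p$ Sylow argument, since after passing to a finite-index subgroup the closure at each auxiliary place is the unique Sylow pro-$p_w$ subgroup of the candidate product and must be hit by any closed subgroup surjecting onto it.

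Your choice of auxiliary places also omits the boundedness requirement $\iota(R)\subset\Z_{p_i}$, which is what guarantees that $\Gamma$ is nondiscrete in $G(\Q_{p_i})$ (a finitely generated Zariski-dense subgroup can perfectly well be discrete in a nonarchimedean completion, e.g.\ a cocompact lattice), and you do not arrange the residue characteristics to be distinct and disjoint from $\Pi_V$, without which the Sylow separation argument cannot be run. So the proof as written does not go through at the step where local control of $\Gamma$ at the auxiliary places, simultaneously with the prescribed behaviour at $V$, must be established; fixing it amounts to proving (or importing) Lemma \ref{L:Ex1} and selecting the $v_i$ as in \cite{PR4}, Proposition 1, rather than citing strong approximation. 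The remaining parts of your argument (solidity of $\mathscr{U}(T,v)$, avoiding torsion via a bound on orders of torsion elements, and the independence computation) are sound and agree with the paper.
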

\begin{proof}
We begin with the following lemma.
\begin{lemma}\label{L:Ex1}
Let $\mcG$ be a connected absolutely almost simple algebraic group
over a field $\mcK$ of characteristic zero, $\Gamma$ be a Zariski-dense
subgroup of $\mcG(\mcK)$. Furthermore,
let $\mathscr{V}$ be a finite set of nontrivial discrete valuations
such that for each $v \in \mathscr{V},$ the completion
$\mcK_v$ is locally compact, hence a finite extension of
$\Q_{p_v}$ for some prime $p_v.$ Assume that the closure of the
image of the diagonal map
$$
\Gamma \longrightarrow \prod_{v \in \mathscr{V}}
\mcG(\mcK_v) =: \mcG_{\mathscr{V}}
$$
is open in $\mcG_{\mathscr{V}}.$ Let now $\mathscr{W}$ be another
finite set of nontrivial discrete valuations of $\mcK$ such that for
each $w \in \mathscr{W}$ we have $\mcK_w = \Q_{p_w}$ for the
corresponding prime $p_w$ and that $\Gamma$ is a nondiscrete
subgroup of $\mcG(\mcK_w)$ (which is automatically the case if
$\Gamma$ is relatively compact in $\mcG(\mcK_w)$). If the primes
$p_w$ for $w \in \mathscr{W}$ are pairwise distinct and none of them
is contained in $\Pi_{\mathscr{V}} = \{p_v \vert v \in \mathscr{V}
\}$, then the closure $\overline{\Gamma}^{(\mathscr{V} \cup
\mathscr{W})}$ of the image of the diagonal map
$$
\Gamma \longrightarrow \prod_{v \in \mathscr{V} \cup \mathscr{W}}
\mcG(\mcK_v) =: \mcG_{\mathscr{V} \cup
\mathscr{W}}
$$
is also open.
\end{lemma}
\begin{proof}
Replacing $\Gamma$ with $\Gamma \cap \Omega$ for a suitable open
subgroup $\Omega$ of $\mcG_{\mathscr{V}},$ we can assume that
the closure $\overline{\Gamma}^{(\mathscr{V})}$ of $\Gamma$ in
$\mcG_{\mathscr{V}}$ is of the form
$$
\overline{\Gamma}^{(\mathscr{V})} = \prod_{v \in \mathscr{V}}
\mathcal{U}_v
$$
where $\mathcal{U}_v$ is an open pro-$p_v$ subgroup of
$\mcG(\mcK_v).$ (We notice that for any open subgroup $\Omega
\subset \mcG_{\mathscr{V}},$ the intersection $\Gamma \cap \Omega$
is still Zariski-dense in $G$ as its closure in $\mcG(\mcK_v)$
contains an open subgroup, for every $v \in \mathscr{V}.$) A standard
argument (cf.\,\cite{PR4}, Lemma 2) shows that the closure
$\overline{\Gamma}^{(w)}$ of $\Gamma$ in $\mcG(\mcK_w)$ is open for
any $w \in \mathscr{W}.$ Moreover, as above, we can assume, after replacing $\Gamma$ with
a subgroup of finite index, that $\overline{\Gamma}^{(w)}$ is a pro-$p_w$
group. It is enough to prove that
\begin{equation}\label{E:Ex222}
\overline{\Gamma}^{(\mathscr{V} \cup \mathscr{W})} =
\overline{\Gamma}^{(\mathscr{V})} \times \prod_{w \in \mathscr{W}}
\overline{\Gamma}^{(w)} =: \Theta.
\end{equation}
Since the primes $p_w,$ $w \in \mathscr{W},$ are pairwise distinct
and none of them is contained in $\Pi_{\mathscr{V}},$ we conclude
that $\overline{\Gamma}^{(w)}$ is the unique Sylow $p_w$-subgroup of
$\Theta,$ for all $w \in \mathscr{W}.$ As the projection
$\overline{\Gamma}^{(\mathscr{V} \cup \mathscr{W})} \to
\overline{\Gamma}^{(w)}$ is a surjective homomorphism of profinite
groups, a Sylow pro-$p_w$ subgroup of
$\overline{\Gamma}^{(\mathscr{V} \cup \mathscr{W})}$ must map
onto $\overline{\Gamma}^{(w)}.$ This implies  that
$\overline{\Gamma}^{(w)} \subset \overline{\Gamma}^{(\mathscr{V}
\cup \mathscr{W})}$ for each $w \in \mathscr{W},$  and
(\ref{E:Ex222}) follows.
\end{proof}

\vskip2mm

\vskip2mm

Continuing the proof of Theorem \ref{T:Ex2}, we fix a matrix
realization of $G$ as a $K$-subgroup of $\mathrm{GL}_n,$ and pick a
finitely generated subring $R$ of $K$ such that $\Gamma \subset
\mathrm{GL}_n(R).$ We will now argue by induction on $m.$ Let $r$ be the number
of nontrivial conjugacy classes in the (absolute) Weyl group of $G$. For $m =
1$ the argument basically mimics the proof of Theorem 2 in
\cite{PR4}. More precisely, by Proposition 1 of \cite{PR4}, we can
choose $r$ distinct primes $p_1, \ldots , p_r \notin \Pi_V$ such
that for each $i \in \{1, \ldots , r\}$ there exists an embedding
$\iota_{p_i} \colon L \hookrightarrow \Q_{p_i}$ such that
$\iota_{p_i}(R) \subset \Z_{p_i}$ and $G$ splits over $\Q_{p_i}$.
For a nontrivial discrete valuation $v$ of $K$ and a given maximal
$K_v$-torus $T$ of $G,$ we let $\mathscr{U}(T , v)$ denote the set
of elements of the form $gtg^{-1}$, with $t \in T(K_v)$ regular and
$g \in G(K_v).$ It is known that $\mathscr{U}(T , v)$ is a
solid\footnote{We recall that a subset of a topological group was
called {\it solid} in \cite{PR6} if it meets every open subgroup of
that group.} open subset of $G(K_v)$ (cf.\,\cite{PR6}, Lemma 3.4).
Let $v_i$ be pullback to $L$ of the $p_i$-adic valuation on
$\Q_{p_i}$ under $\iota_{p_i}$ (so that $L_{v_i} = \Q_{p_i}$). Let
$T(v_1), \ldots , T(v_r)$ be the tori given by Theorem \ref{T:Ex0}.
By our construction, for each $i\leqslant r,$ the group $\Gamma$ is
contained in $G(\Z_{p_i}),$ hence is relatively compact. Thus
Lemma~\ref{L:Ex1} applies, and since for any $v \in V \cup
\{v_1,\ldots\, ,v_r\},$ the group $G(K_v)$ contains a torsion-free
open subgroup, it follows from Lemma \ref{L:Ex1} that there exists
an element of infinite order
$$
\gamma_1 \in \Gamma \bigcap \left(\prod_{v \in V} \mathscr{U}(T_1(v)
, v) \times \prod_{i\leqslant r} \mathscr{U}(T(v_i) , v_i) \right),
$$
and this element is as required. For $m > 1,$ we proceed as in the
proof of Theorem \ref{T:Ex1}. Suppose that the elements $\gamma_1,
\ldots , \gamma_{m-1}$ for which the corresponding $T_1, \ldots ,
T_{m-1}$ satisfy $(i)$ and $(ii),$ and are independent over $L,$
have already been found. Let $L'$ denote the compositum of the
fields $L_{T_1}, \ldots , L_{T_{m-1}}.$ We then again use
Proposition 1 of \cite{PR4} to find $r$ distinct primes $p'_1,
\ldots , p'_r \notin \Pi_V$ such that for each $i\leqslant r$, there
exists an embedding $\iota'_{p'_i} \colon L' \hookrightarrow
\Q_{p'_i}$ with the property $\iota'_{p'_i}(R) \subset \Z_{p'_i}$.
As $G$ splits over $L'$, it splits over $\Q_{p'_i}$. Let $v'_i$ be
the pullback of the $p'_i$-adic valuation on $\Q_{p'_i}$ under
$\iota'_{p'_i}$ (and then $L'_{v'_i} = \Q_{p'_i}$). We use Theorem
\ref{T:Ex0} to find, for each $i\leqslant r$, an $L'_{v'_i}$-torus
$T'(v'_i)$ of $G$ such that for any maximal $K$-torus $T'$ of $G$
which is conjugate to $T'(v'_i)$ by an element of $G(L'_{v'_i})$ for
all $i\leqslant r$, we have $$\theta_{T'}(\Ga (L'_{T'}/L') )\supset
W(G,T').$$ As above, there exists an element of infinite order
$$
\gamma_m \in \Gamma \bigcap \left(\prod_{v \in V} \mathscr{U}(T_m(v)
, v) \times \prod_{i\leqslant r} \mathscr{U}(T'(v'_i) , v'_i)
\right)
$$
Then $\gamma_m$ clearly satisfies $(i)$ and $(ii),$ and the fact
that $T_1, \ldots , T_m$ are independent over $L$ is established
just as in the proof of Theorem \ref{T:Ex1}.
\end{proof}

\vskip5mm
%
%
%
%
%
%
%
%
%
%
%
%
%
%

\section{Field of definition}\label{S:FDef}

Let $G_1$ and $G_2$ be connected absolutely simple algebraic groups of adjoint type 
defined over a field $F$ of characteristic zero. As before, we let $w_i$
denote the order of the (absolute) Weyl group of $G_i$ for $i = 1,
2.$ Suppose that for each $i \in \{1 , 2\}$ we are given a finitely
generated Zariski-dense subgroup $\Gamma_i$ of $G_i(F).$ Our goal in
\S\S \ref{S:FDef}-\ref{S:AG} is to develop a series of conditions
which must hold in order to prevent the subgroups $\Gamma_1$ and
$\Gamma_r$ from satisfying condition $(C_i)$ (see Definition 3 in
\S\ref{S:I}) for at least one $i \in \{1 , 2\}.$ Here is our
first, rather straightforward, result in this direction.

\vskip2mm

\begin{thm}\label{T:property-Ci}
$(i)$ If every regular semi-simple element $\gamma \in \Gamma_1$ of
infinite order is weakly contained in $\Gamma_2$ then $\mathrm{rk}\:
G_{1} \leqslant \mathrm{rk}\: G_2$ and $w_1$ divides $w_2$.

\vskip2mm

$(ii)$ If $w_1 > w_2$, then property $(C_1)$ holds.
\end{thm}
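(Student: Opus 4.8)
The plan is to deduce both assertions from Theorem~\ref{T:10} applied to the tori arising as centralizers of the regular semi-simple elements supplied by Theorem~\ref{T:Ex2}. For part $(i)$, suppose every regular semi-simple $\gamma \in \Gamma_1$ of infinite order is weakly contained in $\Gamma_2$. First I would use Theorem~\ref{T:Ex2} (with $L = F$, so that the inner-type hypothesis is automatic for adjoint $G_1$ — one may need to enlarge $F$ to a finite extension over which $G_1$ becomes inner, which is harmless since $\Gamma_1$ stays Zariski-dense) to produce a single regular semi-simple $\gamma_1 \in \Gamma_1$ of infinite order whose centralizer $T_1 = Z_{G_1}(\gamma_1)^\circ$ is an $F$-irreducible, nonsplit maximal torus with $\theta_{T_1}(\Ga(F_{T_1}/F)) = W(G_1, T_1)$, so that $[F_{T_1} : F] = w_1$. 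The hypothesis on weak containment gives a relation of the form in Definition~1, i.e. $\chi^{(1)}(\gamma_1) = \chi_1^{(2)}(\gamma_1^{(2)}) \cdots \chi_{m_2}^{(2)}(\gamma_{m_2}^{(2)})$ with the right-hand side a product over semi-simple elements of $\Gamma_2$ and a nontrivial left-hand character. Applying Theorem~\ref{T:10} to this relation (with the single independent irreducible nonsplit torus $T_1$ on the left), we obtain a surjective $F$-homomorphism $T_j^{(2)} \to T_1$ for some $j$, where $T_j^{(2)}$ is a maximal torus of $G_2$. Surjectivity of a torus homomorphism forces $\dim T_j^{(2)} \geqslant \dim T_1$, i.e. $\rk G_2 \geqslant \rk G_1$, and also gives $F_{T_1} \subset F_{T_j^{(2)}}$, so $w_1 = [F_{T_1}:F]$ divides $[F_{T_j^{(2)}}:F]$. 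The remaining point — that $w_1 \mid w_2$ rather than merely dividing some $[F_{T_j^{(2)}}:F]$ — is where I expect the real work: I would arrange, again via Theorem~\ref{T:Ex2} (or rather its proof, choosing the local tori $T_j^{(2)}(v)$ appropriately and invoking weak approximation as in Corollary~\ref{C:Ex1}), that $T_1$ can be taken generic enough that \emph{any} maximal $F$-torus of $G_2$ admitting a surjection onto it is itself $F$-irreducible with full Galois image $W(G_2,\cdot)$, forcing $[F_{T_j^{(2)}}:F] = w_2$; since $\theta_{T_j^{(2)}}$ then embeds $\Ga(F_{T_j^{(2)}}/F)$ with image $W(G_2,T_j^{(2)})$ and this group surjects onto $\theta_{T_1}(\Ga(F_{T_1}/F)) = W(G_1,T_1)$, we get $w_1 \mid w_2$.

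For part $(ii)$, assume $w_1 > w_2$ and suppose toward a contradiction that $(C_1)$ fails. By Definition~3, failure of $(C_1)$ means there is some $m \geqslant 1$ such that every multiplicatively independent family of $m$ semi-simple elements of infinite order in $\Gamma_1$ \emph{is} weakly contained in $\Gamma_2$; actually it is cleaner to note that failure of $(C_1)$ implies in particular that every single regular semi-simple element of infinite order in $\Gamma_1$ is weakly contained in $\Gamma_2$ (take $m = 1$, using that such an element is multiplicatively independent as a one-element family since its torus is $F$-irreducible, hence has no nontrivial fixed characters). Then part $(i)$ applies and yields $w_1 \mid w_2$, contradicting $w_1 > w_2$. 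Hence $(C_1)$ holds.

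The main obstacle, as flagged above, is the passage in $(i)$ from ``$w_1$ divides $[F_{T_j^{(2)}}:F]$ for the particular torus produced'' to ``$w_1$ divides $w_2$''. The clean way is to control the torus $T_j^{(2)}$ on the $G_2$-side: since Theorem~\ref{T:10}'s surjection $T_j^{(2)} \to T_1$ is Galois-equivariant, the action of $\Ga(F_{T_j^{(2)}}/F)$ on $X(T_j^{(2)}) \otimes \Q$ surjects onto its action on $X(T_1) \otimes \Q$, and the latter image is all of $W(G_1,T_1)$ by our choice of $\gamma_1$. Because $\theta_{T_j^{(2)}}$ is injective with image inside $\mathrm{Aut}(\Phi(G_2, T_j^{(2)}))$, whose order is bounded in terms of $w_2$, we get $w_1 \leqslant |\mathrm{Aut}(\Phi(G_2,\cdot))|$; to get the sharper divisibility $w_1 \mid w_2$ one uses that the surjection $\Ga(F_{T_j^{(2)}}/F) \twoheadrightarrow \Ga(F_{T_1}/F)$ has source of order dividing $|\mathrm{Aut}(\Phi(G_2,\cdot))|$ and, after passing to the finite extension of $F$ over which $G_2$ also becomes inner (again harmless), of order dividing $w_2$. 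I would present this last normalization at the outset — replace $F$ by a finite extension $F'$ over which both $G_1$ and $G_2$ are inner and $\Gamma_i$ remain Zariski-dense — so that throughout, $\theta_T(\Ga(F'_T/F')) \subseteq W(G_i, T)$ for every maximal $F'$-torus $T$, which is exactly what converts the divisibility statement about field degrees into the divisibility $w_1 \mid w_2$.
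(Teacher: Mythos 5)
Your overall strategy (generic tori from Theorem~\ref{T:Ex2}, unscrambling via Theorem~\ref{T:10}, then the inner-type containment $\theta_T(\Ga(\cdot)) \subset W(G_2,T)$ to bound the degree by $w_2$) is the paper's strategy, but two steps as written do not work. The most serious is the reduction of $(ii)$ to $(i)$. Failure of $(C_1)$ only says that for \emph{some} $m$ every multiplicatively independent $m$-tuple of infinite-order semi-simple elements of $\Gamma_1$ is weakly contained in $\Gamma_2$. Since weak containment of a subfamily implies weak containment of the whole family (pad with trivial characters), the existence statement in Definition~3 propagates \emph{downward} in $m$, so its negation propagates \emph{upward}: knowing it fails at some $m_0$ tells you nothing about $m=1$, and you cannot conclude that every single regular semi-simple element of infinite order is weakly contained. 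The paper does not deduce $(ii)$ from $(i)$; instead, for the offending $m$ it produces via Theorem~\ref{T:Ex2} $m$ elements $\gamma_1,\dots,\gamma_m$ whose tori are independent, irreducible and nonsplit, notes they are multiplicatively independent by Lemma~\ref{L:P1}, and applies Theorem~\ref{T:10} (which is formulated exactly for a product of several characters on the left) to a putative weak-containment relation, obtaining a surjection $T^{(2)}_j \to T_i$ and hence $w_1 \mid w_2$, contradicting $w_1 > w_2$. Your argument is repairable along these lines, but the step you actually wrote is false.

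The handling of the base field is also off. Theorem~\ref{T:Ex2} requires a \emph{finitely generated} field $K$ of characteristic zero (its proof embeds a finitely generated ring into rings of $p$-adic integers), so you cannot invoke it with $L=F$ or with a finite extension $F'$ of $F$: in the intended applications $F=\R$, which is not finitely generated and admits no suitable discrete valuations. The correct first move — and the paper's — is the opposite of enlarging: use the finite generation of $\Gamma_1,\Gamma_2$ to pick a finitely generated field $K$ with $\Gamma_i \subset G_i(K)$ and the groups defined over $K$, and then a finitely generated extension over which both $G_1$ and $G_2$ become of inner type (adjointness by itself does not make the inner-type hypothesis automatic). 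With that normalization your endgame for $(i)$ — the surjection of splitting-field Galois groups together with $\theta_{T^{(2)}_j}(\Ga(\cdot)) \subset W(G_2,T^{(2)}_j)$ from Lemma 4.1 of \cite{PR6} — is exactly the paper's proof; in particular, the intermediate attempt to force the $G_2$-side torus to have full Galois image $W(G_2,\cdot)$ is neither achievable (you have no control over which tori of $G_2$ appear in a weak-containment relation, and Theorem~\ref{T:Ex2} only produces tori through elements of $\Gamma_1$) nor needed, since the divisibility $w_1 \mid [\,\cdot\,]\mid w_2$ already follows from irreducibility on the $G_1$ side and inner type on the $G_2$ side.
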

\begin{proof}
$(i)$ We fix a finitely generated subfield $K$ of $F$ such that for
$i = 1$ and $2$, the group $G_i$ is defined and of inner type over
$K$ and $\Gamma_i \subset G_i(K).$  By Theorem \ref{T:Ex2}, there
exists a regular semi-simple element $\gamma \in \Gamma_{1}$ of
infinite order such that for the corresponding torus $T =
Z_{G_1}(\gamma)^{\circ}$ we have
$$
\theta_T(\Ga(K_T/K)) \supset W(G_{1} , T);
$$
we notice that since $G_1$ is of inner type over $K,$ this inclusion
is actually an equality, cf.\:Lemma 4.1 of \cite{PR6}. The fact that
$\gamma$ is weakly contained in $\Gamma_2$ means that one can find
semi-simple elements $\gamma^{(2)}_1, \ldots , \gamma^{(2)}_{m_2}
\in \Gamma_2$ so that for some characters $\chi \in X(T)$ and
$\chi^{(2)}_j \in X(T^{(2)}_j),$ where $T^{(2)}_j$ is a maximal
$K$-torus of $G_2$ containing $\gamma^{(2)}_j,$ there is a relation
of the form
$$
\chi(\gamma) =  \chi_1^{(2)}(\gamma_1^{(2)}) \cdots
\chi_{m_2}^{(2)}(\gamma_{m_2}^{(2)}) \neq 1.
$$
Then it follows from Theorem \ref{T:10} that for some $j \leqslant
m_2,$ there exists a surjective $K$-homomorphism $T_j^{(2)} \to T.$
Then $\mathrm{rk}\: G_{1} \leqslant \mathrm{rk}\: G_2$ and there
exists a surjective homomorphism $\Ga(K_{T_j^{(2)}}/K) \to
\Ga(K_T/K).$ Since $$\theta_{T_j^{(2)}}(\Ga(K_{T_j^{(2)}}/K))
\subset W(G_2, T_j^{(2)})$$ (Lemma 4.1 of  \cite{PR6}), our
assertion follows.

\vskip2mm

$(ii)$ The argument here basically repeats the argument given above
with minor modifications. Let $K$ be chosen as in the proof of
$(i).$ To verify property $(C_{1}),$ we use Theorem \ref{T:Ex2} to
find, for any given $m \geqslant 1,$ regular semi-simple elements
$\gamma_1, \ldots , \gamma_m \in \Gamma_{1}$ of infinite order such
that for the corresponding maximal $K$-tori $T_i =
Z_{G_{1}}(\gamma_i)^{\circ}$ of $G_{1}$ we have
$$
\theta_{T_i}(\Ga(K_{T_i}/K)) \supset W(G_{1} , T_i) \ \ \ \text{for
all} \ \  i\leqslant m,
$$
and the tori $T_1, \ldots , T_m$ are independent over $K.$ Then the
elements $\gamma_1, \ldots , \gamma_m$ are multiplicatively
independent by Lemma \ref{L:P1}, and we only need to show that they
are not weakly contained in $\Gamma_2$ given that $w_1 > w_2$.
Otherwise, we would have a relation of the form
$$
\chi_1(\gamma_1) \cdots \chi_m(\gamma_m) =
\chi_1^{(2)}(\gamma_1^{(2)}) \cdots
\chi_{m_2}^{(2)}(\gamma_{m_2}^{(2)}) \neq 1
$$
with $\chi_j \in X(T_j)$ and the other objects as in the proof of
$(i).$ Invoking again Theorem \ref{T:10}, we see that for some $i
\leqslant m$ and $j \leqslant m_2$, there exists a surjective
$K$-homomorphism $T_j^{(2)} \to T_i.$ As above, this implies that
$w_1$ divides $w_2$, contradicting the fact that by our assumption
$w_1 > w_2$.
\end{proof}

\vskip2mm

Now, let $K_i = K_{\Gamma_i}$ denote the field of definition of
$\Gamma_i,$ i.e.\:the subfield of $F$ generated by the traces $\Tr\:
\mathrm{Ad}_{G_i}(\gamma)$ for all $\gamma \in \Gamma_i$ (cf.\,\cite{Vin}).
Since $\Gamma_i$ is finitely generated, $\mathrm{Ad}_{G_i}(\Gamma_i)$  is contained
in $\mathrm{GL}_{n_i}(F_i)$ for some finitely generated subfield
$F_i$ of $F.$ Then $K_i$ is a subfield of $F_i,$ hence it is  finitely
generated. Since $G_i$ is adjoint, according to the results of Vinberg \cite{Vin},  
it is defined over $K_i$ and $\Gamma_i \subset
G_i(K_i).$

\vskip1mm

The following theorem, announced in the introduction, is the main
result of this section.

\vskip2mm

\begin{thm}\label{T:F1} $(i)$ If $w_1
> w_2$ then condition $(C_1)$ holds;

\vskip2mm

\noindent (ii) If $w_1 = w_2$ but $K_1 \not\subset K_2$ then again
$(C_1)$ holds.

\vskip2mm

\noindent Thus, unless $w_1 = w_2$ and $K_1 = K_2$, condition
$(C_i)$ holds for at least one $i \in \{1 , 2\}$.
%
%
\end{thm}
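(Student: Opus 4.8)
The plan is to reduce Theorem~\ref{T:F1} to Theorem~\ref{T:property-Ci} together with a trace-field refinement of the argument used there. Part $(i)$ is exactly Theorem~\ref{T:property-Ci}$(ii)$, so nothing new is needed. For part $(ii)$, assume $w_1 = w_2$ but $K_1 \not\subset K_2$; I want to produce, for each $m \geqslant 1$, multiplicatively independent regular semi-simple elements $\gamma_1, \ldots, \gamma_m \in \Gamma_1$ of infinite order that are not weakly contained in $\Gamma_2$. As in the proof of Theorem~\ref{T:property-Ci}, fix a finitely generated subfield $K$ of $F$ over which both $G_i$ are defined and of inner type and $\Gamma_i \subset G_i(K)$; note $K_1, K_2 \subset K$. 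Apply Theorem~\ref{T:Ex2} with $L = K$ to obtain regular semi-simple $\gamma_1, \ldots, \gamma_m \in \Gamma_1$ of infinite order whose centralizer tori $T_i = Z_{G_1}(\gamma_i)^{\circ}$ satisfy $\theta_{T_i}(\Ga(K_{T_i}/K)) \supset W(G_1, T_i)$ (an equality since $G_1$ is inner over $K$) and are independent over $K$; Lemma~\ref{L:P1} gives multiplicative independence.

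Suppose toward a contradiction that these $\gamma_i$ are weakly contained in $\Gamma_2$, so there is a relation $\chi_1(\gamma_1)\cdots\chi_m(\gamma_m) = \chi_1^{(2)}(\gamma_1^{(2)})\cdots\chi_{m_2}^{(2)}(\gamma_{m_2}^{(2)}) \neq 1$ with $\gamma_j^{(2)} \in \Gamma_2$ lying in maximal $K$-tori $T_j^{(2)}$ of $G_2$. By Theorem~\ref{T:10}, for some $i$ and some $j$ there is a surjective $K$-homomorphism $T_j^{(2)} \to T_i$; since $w_1 = w_2$ forces $\dim T_j^{(2)} = \dim T_i$ (equality of ranks), this homomorphism is an isogeny and $K_{T_i} = K_{T_j^{(2)}}$. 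The key point is then that the splitting field $K_{T_i}$ has a lot of room over $K$: because $\theta_{T_i}$ surjects onto the full Weyl group, $\Ga(K_{T_i}/K) \cong W(G_1, T_i)$, and I will arrange (by feeding Theorem~\ref{T:Ex2} suitable local tori, exactly as in the proof of Theorem~\ref{T:property-Ci} and using that the splitting field of a torus determined up to $G(K_v)$-conjugacy is controlled locally) that the fixed field of the image of the corresponding Galois group in $\mathrm{Aut}(\Phi)$ contains a specified finite subextension of $K$ over $K_2$. Concretely, pick $t \in K_1 \setminus K_2$ (or more robustly, a trace $t = \Tr\,\Ad_{G_1}(\delta)$ for some $\delta \in \Gamma_1$ with $t \notin K_2$) and ensure $t \in K_{T_i}$ for every $i$; on the other hand $K_{T_j^{(2)}}$, being the splitting field of a torus in $G_2$ with $\Gamma_2 \subset G_2(K_2)$-rational point $\gamma_j^{(2)}$...

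Here I must be careful: it is \emph{not} automatic that $K_{T_j^{(2)}} \subset K_2 \cdot(\text{something})$ in a way that excludes $t$. The honest route, which I expect to be the real content, is to invoke the field-of-definition machinery: the element $\gamma_j^{(2)} \in \Gamma_2 \subset G_2(K_2)$ lies in a maximal \emph{$K_2$-torus} $S_j$ of $G_2$, and $T_j^{(2)}$ may be taken to be $S_j$ viewed over $K$, so $K_{T_j^{(2)}} = K_2 \cdot (K_2)_{S_j}$ where $(K_2)_{S_j}$ is the splitting field of $S_j$ over $K_2$ — in particular $K_{T_j^{(2)}}$ is generated over $K_2$ by elements algebraic over $K_2$ of degree dividing $w_2$, and its intersection with $K_1$ need not be all of $K_1$. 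So from $K_{T_i} = K_{T_j^{(2)}} \subset K_2 \cdot \overline{K_2}$ I get $K_{T_i} \subset K_2 \cdot K_{T_i}'$ for a field $K_{T_i}'$ algebraic over $K_2$; the contradiction comes from choosing the local data in Theorem~\ref{T:Ex2} so that $K_{T_i}$ is linearly disjoint from $K_2$ over $K_2 \cap K_{T_i}$ yet still contains the transcendental-over-$K_2$ element $t$, which is impossible if $K_{T_i}/ (K_2 \cap K_{T_i})$ is algebraic. \textbf{The main obstacle} is precisely this step: arranging, via the local tori fed into Theorem~\ref{T:Ex2}, that the splitting fields $K_{T_i}$ over $K$ "see" the discrepancy $K_1 \not\subset K_2$ — i.e. that some $\gamma_i$ can be chosen with $K_2 \not\subset K_{T_i}$ failing in the right direction, equivalently $K_{T_i}$ not contained in any $K_2$-dominated field — so that the forced equality $K_{T_i} = K_{T_j^{(2)}}$ collapses. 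I expect to handle it by the same weak-approximation argument as in Theorem~\ref{T:property-Ci}, choosing at an auxiliary place $v$ a $K_v$-torus in $G_1$ whose splitting behavior over $K$ is incompatible with being defined over $K_2$, using that $K_1 \not\subset K_2$ gives a valuation of $K$ separating them. The final clause of the theorem is then immediate: by $(i)$ and $(ii)$, the only way $(C_i)$ fails for both $i$ is $w_1 = w_2$ together with $K_1 \subset K_2$ and (by symmetry, applying $(ii)$ with the roles of $1$ and $2$ exchanged) $K_2 \subset K_1$, i.e. $K_1 = K_2$.
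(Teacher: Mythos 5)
Your reduction of part (i) to Theorem \ref{T:property-Ci}(ii) and your skeleton for part (ii) (Theorem \ref{T:Ex2} $+$ Lemma \ref{L:P1} to get multiplicatively independent $\gamma_1,\ldots,\gamma_m$, then Theorem \ref{T:10} to extract a $K$-isogeny $T_j^{(2)}\to T_i$ and the equality of splitting fields) match the paper up to that point, and your symmetry argument for the final clause is fine. But the step you yourself flag as ``the main obstacle'' is exactly where the proof lives, and the contradiction you sketch does not work: you want to find $t\in K_1\setminus K_2$ which is \emph{transcendental} over $K_2$ and argue that $K_{T_i}=K_{T_j^{(2)}}$ is ``$K_2$-dominated,'' i.e.\ algebraic over $K_2$ modulo $K$. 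The hypothesis $K_1\not\subset K_2$ gives no such $t$ in general --- $K_1$ may well be algebraic over $K_2$ (e.g.\ both number fields, the principal case once one passes to arithmetic subgroups), and then every splitting field in sight is algebraic over $K_2$ and no transcendence-degree count can distinguish the two sides. Moreover, since both $T_i$ and $T_j^{(2)}$ are being split over the big field $K\supset K_1K_2$ (or $L$), both splitting fields automatically contain $K_1$, so ``ensuring $t\in K_{T_i}$'' carries no information; the discrepancy $K_1\not\subset K_2$ simply cannot be detected by comparing the splitting fields as abstract extensions of $K$.

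What the paper actually does at this point is different and genuinely uses the valuation-theoretic machinery of \cite{PR6}, \S 5. From $K_2\subsetneqq K_1K_2$ one produces (Proposition 5.1 of \cite{PR6}) a prime $q$ and two embeddings $\iota^{(1)},\iota^{(2)}\colon L\hookrightarrow \Q_q$ which \emph{agree on $K_2$ but differ on $K_1$}; their pullbacks give two places $v^{(1)},v^{(2)}$ of $K_1$ lying over a single place $u$ of $K_2$. Proposition 5.2 of \cite{PR6} then shows the closure of $\Gamma_1$ in $G_1((K_1)_{v^{(1)}})\times G_1((K_1)_{v^{(2)}})$ is open --- this is what makes Theorem \ref{T:Ex2} applicable with prescribed local behavior at \emph{both} places, a point your sketch does not address --- and one prescribes $T_i$ to be split at $v^{(1)}$ and anisotropic at $v^{(2)}$. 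If the $\gamma_i$ were weakly contained in $\Gamma_2$, Theorem \ref{T:10} would give a maximal $K_2$-torus $T'$ of $G_2$ with $L_{T}=L_{T'}$ for $T=T_i$, hence $L\cdot {K_1}_T=L\cdot {K_2}_{T'}$; but the two induced valuations of ${K_2}_{T'}$ extend the \emph{same} valuation $u$ of $K_2$, so, ${K_2}_{T'}/K_2$ being Galois, the two local degrees are equal --- contradicting that $T$ is split at $v^{(1)}$ and anisotropic at $v^{(2)}$. So the discrepancy between $K_1$ and $K_2$ is exploited not through transcendence, but through the existence of two places of $K_1$ over one place of $K_2$ together with a torus whose local behavior at those two places differs; your proposal is missing this device, and without it part (ii) is not proved.
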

\begin{proof}
Assertion $(i)$ has already been established in Theorem
\ref{T:property-Ci}. For $i=$ 1, 2, as the group $G_i$ has been assumed 
to be of adjoint type, it is
defined over $K_i$ and $\Gamma_i \subset G_i(K_i)$. 
Set $K = K_1K_2$, and pick a finite extension $L$ of $K$ so that
$G_i$ splits over $L$ for both $i \in \{1 , 2\}$; clearly, $L$ is
finitely generated. Fix a matrix realization of $G_1$ as a $K_1$-subgroup of
$\mathrm{GL}_n$, and pick a finitely generated subring $R$ of $K_1$
so that $\Gamma \subset G_1(R)$.
Since by our assumption $K_1 \not\subset K_2,$ we have $K_2
\subsetneqq K \subset L$. So, using Proposition 5.1 of \cite{PR6},
we can find a prime $q$ such that there exists a pair of embeddings
$$
\iota^{(1)} , \iota^{(2)} \colon L \hookrightarrow \Q_q
$$
which have the same restrictions to $K_2$ but different restrictions
to $K$, hence to $K_1$, and which satisfy the condition
$\iota^{(j)}(R) \subset \Z_q$ for $j = 1, 2.$ Let $v^{(j)}$ be the
pullback to $K_1$ of the $q$-adic valuation of $\Q_q$ under
$\iota^{(j)} \vert_{K_1}$. The group $G_1((K_1)_{v^{(j)}})$ can be
naturally identified with $G^{(j)}_1(\Q_q)$, where $G^{(j)}_1$
denotes the algebraic $\Q_q$-group obtained from the $K_1$-group $G_1$
by the extension of scalars $\iota^{(j)} \vert_{K_1} \colon K_1 \to
\Q_q$, for $j = 1, 2$. Since $\iota^{(1)}$ and $\iota^{(2)}$ have
different restrictions to $K_1$, it follows from Proposition 5.2 of
\cite{PR6} that the closure of the image of $\Gamma_1$ under the diagonal embedding
\begin{equation}\label{E:F8802}
\Gamma_1 \longrightarrow G_1((K_1)_{v^{(1)}}) \times G_1((K_1)_
{v^{(2)}})
\end{equation}
is open. By our construction, $G_1$ splits over $(K_1)_{v^{(1)}} =
\Q_q$ (recall that $\iota^{(1)}(L) \subset \Q_q$ and $G_1$ splits
over $L$), so we can pick a $(K_1)_{v^{(1)}}$-split torus
$T^{(v^{(1)})}$ of $G_1$. Furthermore, by Theorem 6.21 of \cite{PlR}
there exists a maximal $(K_1)_{v^{(2)}}$-torus $T^{(v^{(2)})}$ of
$G_1$ which is anisotropic over $(K_1)_ {v^{(2)}}$.


Set $V = \{ v^{(1)} , v^{(2)} \}$. It follows from Theorem
\ref{T:Ex2} that for any $m \geqslant 1$ there exist regular
semi-simple elements $\gamma_1, \ldots , \gamma_m \in \Gamma_1$ of
infinite order such that the maximal tori $T_i =
Z_{G_1}(\gamma_i)^{\circ}$ for $i \leqslant m$ are independent over
$L$ and satisfy the following conditions for all $i \leqslant m$:

\vskip2mm

\noindent $\bullet$ $\theta_{T_i}(\Ga(L_{T_i}/L)) \supset W(G_1 ,
T_i)$;

\vskip1mm

\noindent $\bullet$ $T_i$ is conjugate to $T^{(v)}$ for $v \in V$.

\vskip2mm

\noindent We claim that these elements are allow us to check the property 
$(C_1)$. Indeed, it follows from Lemma \ref{L:P1} that these
elements are multiplicatively independent, and we only need to show
that they are not weekly contained in $\Gamma_2$. Assume the
contrary. As $w_1 =w_2$, we conclude that $\mathrm{rk}\: G_1 =\mathrm{rk}\:G_2$, and  there exists a
maximal $K_2$-torus $T'$ of $G_2$ that admits an $L$-isogeny
$\kappa \colon T' \to T$ onto $T = T_i$ for some $i \leqslant m$ (see the proof of Theorem
\ref{T:property-Ci}(ii)),
and then
$$
L_{T} = L_{T'} =: \cF.
$$
Observe that
\begin{equation}\label{E:F8803}
\cF = L \cdot {K_1} _{T} = L \cdot {K_2}_{T'}.
\end{equation}
Fix some extensions
$$
\tilde{\iota}^{(1)} , \tilde{\iota}^{(2)} \colon \cF \to
\overline{\Q}_q \ \ \ \ \ (\overline{\Q}_q \ \text{is the algebraic
closure of} \ \Q_q)
$$
of $\iota^{(1)}$ and $\iota^{(2)}$ respectively. Let $u$ be the
pullback to $K_2$ of the $q$-adic valuation of $\Q_q$ under
$\iota^{(1)} \vert_{K_2} = \iota^{(2)} \vert_{K_2}$. Furthermore,
let $\tilde{v}^{(1)} , \tilde{v}^{(2)}$ (resp., $\tilde{u}^{(1)} ,
\tilde{u}^{(2)}$) be the valuations of ${K_1}_{T}$ (resp., of ${K_2}_{T'}$) 
obtained as pullbacks of the valuation of $\overline{\Q}_q$
under appropriate restrictions of $\tilde{\iota}^{(1)}$ and
$\tilde{\iota}^{(2)}$. Then $\tilde{u}^{(1)}$ and $\tilde{u}^{(2)}$
are two extensions of $u$ to the Galois extension ${K_2}_{T'}/K_2$,
and therefore
\begin{equation}\label{E:F8804}
\left[\left( {K_2}_{T'} \right)_{\tilde{u}^{(1)}} : \left( K_2
\right)_u \right] = \left[\left( {K_2}_{T'} \right)_{\tilde{u}^{(2)}}
: \left( K_2 \right)_u \right].
\end{equation}
On the other hand, since $\iota^{(j)}(L) \subset \Q_q$ for $j = 1,
2$, we have
$$
(K_2)_u = \Q_q \ \ \ \text{and} \ \ \ (K_1)_{v^{(1)}} = \Q_q =
(K_1)_{v^{(2)}}.
$$
Moreover, it follows from (\ref{E:F8803}) that
\begin{equation}\label{E:F8805}
({K_2}_{T'})_{\tilde{u}^{(j)}} = ({K_1}_{T})_{\tilde{v}^{(j)}} \ \ \
\text{for} \ \ \ j = 1, 2.
\end{equation}
But, by our construction, $T$ is $(K_1)_{v^{(1)}}$-split and $(K_1)_
{v^{(2)}}$-anisotropic. So,
$$
\left[ \left({K_1}_{T} \right)_{\tilde{v}^{(1)}} : (K_1)_{v^{(1)}}
\right] = 1 \ \ \ \text{and} \ \ \ \left[ \left({K_1}_{T}
\right)_{\tilde{v}^{(2)}} : (K_1)_{v^{(2)}} \right] \ne 1
$$
This, in view of (\ref{E:F8805}), contradicts (\ref{E:F8804}). So,
the elements $\gamma_1, \ldots , \gamma_m$ are not weakly contained
in $\Gamma_2$, verifying condition $(C_1)$.
\end{proof}

\vskip5mm

\section{Arithmetic groups}\label{S:AG}

In this section, we will treat the case where the Zariski-dense
subgroups $\Gamma_i \subset G_i(F)$ are $S$-arithmetic. For our
purposes, it is convenient to use the description of these subgroups
introduced in \cite{PR6}, \S 1, and for the reader's convenience we
briefly recall here the relevant definitions and results. So, let
$G$ be a connected absolutely almost simple algebraic group
defined over a field $F$ of characteristic zero, let $\overline{G}$
be the corresponding adjoint group, and let $\pi \colon G \to
\overline{G}$ be the natural isogeny. Suppose we are given:

\vskip2mm

$\bullet$ a number field $K$ together with a {\it fixed} embedding
$K \hookrightarrow F;$

\vskip2mm

$\bullet$ \parbox[t]{12.5cm}{an $F/K$-form $\cG$ of $\overline{G}$
(which means that the group $_{F}\!\cG$ obtained by the base change
$K \hookrightarrow F$ is $F$-isomorphic to $\overline{G}$);}

\vskip2mm

$\bullet$ \parbox[t]{12.5cm}{a finite set $S$ of places of $K$ that
contains $V^{\infty}_K$ but does not contain any nonarchimedean
places where $\cG$ is anisotropic.}

\vskip2mm

\noindent We then have an embedding $\iota \colon \cG(K)
\hookrightarrow \overline{G}(F),$ which is well-defined up to an
$F$-automorphism of $\overline{G}.$ Now, let $\cO_K(S)$ be the ring
of $S$-integers in $K$ (with $\cO_K = \cO_K(V^{\infty}_K)$ denoting
the ring of algebraic integers in $K$). Fix a $K$-embedding $\cG
\hookrightarrow \mathrm{GL}_n,$ and set $\cG(\cO_K(S)) = \cG(K) \cap
\mathrm{GL}_n(\cO_K(S)).$ A subgroup $\Gamma \subset G(F)$ is called $(\cG,
K, S)$-{\it arithmetic} if $\pi(\Gamma)$ is commensurable with
$\sigma(\iota(\cG(\cO_K(S))))$ for some $F$-automorphism $\sigma$ of
$\overline{G}.$ As usual, $(\cG, K, V^{\infty}_K)$-arithmetic
subgroups will simply be called $(\cG, K)$-arithmetic. We recall
(Lemma 2.6 of \cite{PR6}) that if $\Gamma \subset G(F)$ is a
Zariski-dense $(\cG, K, S)$-arithmetic subgroup then the trace field
$K_{\Gamma}$ coincides with $K.$

\vskip1mm

Now, for $i = 1, 2,$ let $G_i$ be a connected absolutely 
simple $F$-group of adjoint type. We will say that
the subgroups $\Gamma_i \subset G_i(F)$ are {\it commensurable up to
an $F$-isomorphism} between ${G}_1$ and ${G}_2$ if
there exists an $F$-isomorphism $\sigma \colon {G}_1 \to
{G}_2$ such that $\sigma(\Gamma_1)$ is commensurable
with $\Gamma_2$ in the usual sense, i.e. their intersection
is of finite index in both of them. According to Proposition 2.5 of
\cite{PR6}, if $\Gamma_i$ is a Zariski-dense $(\cG_i, K_i,
S_i)$-arithmetic subgroup of $G_i(F)$ for $i = 1, 2,$ then
$\Gamma_1$ and $\Gamma_2$ are commensurable up to an $F$-isomorphism
between ${G}_1$ and ${G}_2$ if and only if $K_1 =
K_2 =:K,$ $S_1 = S_2$ and $\cG_1$ and $\cG_2$ are $K$-isomorphic.

In this section, unless stated otherwise, we will assume that the absolute Weyl groups of $G_1$
and $G_2$ are of equal order.

\vskip1mm

\begin{thm}\label{T:ArG1}
Let $G_1$ and $G_2$ be connected absolutely simple algebraic groups of 
adjoint type defined 
over a field $F$ of characteristic zero such that $w_1 = w_2$, and
let $\Gamma_i \subset G_i(F)$ be a Zariski-dense $(\cG_i, K_i,
S_i)$-arithmetic subgroup for $i = 1, 2$. Furthermore, let $L_i$ be
the minimal Galois extension of $K_i$ over which $\cG_i$ becomes an
inner form. Then, unless {\bf all} of the following conditions are
satisfied:

\vskip2mm

\noindent {\rm (a)} $K_1 = K_2 =: K$,

\vskip1mm

\noindent {\rm (b)} $\mathrm{rk}_{K_v}\,\cG_1 = \mathrm{rk}_{K_v}\,
\cG_2$ for all $v \in V^K$,

\vskip1mm

\noindent {\rm (c)} $L_1 = L_2$,

\vskip1mm

\noindent {\rm (d)} $S_1 = S_2$,

\vskip2mm

\noindent condition $(C_i)$ holds for at least one $i \in \{1
, 2\}$.
\end{thm}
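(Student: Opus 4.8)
The plan is to assume that property $(C_i)$ fails for \emph{both} $i = 1$ and $i = 2$ and to show that (a)--(d) all hold. After the standard reductions (passing to finite-index subgroups, which does not affect $(C_i)$, and absorbing the ambiguous $F$-automorphism) one may take $\Gamma_i \subset \cG_i(K_i)$ with $\cG_i$ adjoint over $K_i$; since $K_{\Gamma_i} = K_i$ for a Zariski-dense arithmetic subgroup (Lemma 2.6 of \cite{PR6}), Theorem \ref{T:F1}(ii) immediately gives $K_1 = K_2 =: K$, which is (a), and $w_1 = w_2$ forces $\mathrm{rk}\,\cG_1 = \mathrm{rk}\,\cG_2 =: n$, so every surjective $K$-homomorphism of maximal tori produced below is an isogeny. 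Fix a finite extension $L/K$ splitting both $\cG_i$. The engine for (b)--(d) is this: for any finite set $V$ of nonarchimedean places of $K$ and any choice of maximal $K_v$-tori of $\cG_i$ at the $v \in V$, Theorem \ref{T:Ex2} --- whose hypothesis on the closure of the diagonal image of $\Gamma_i$ is supplied by strong approximation in the simply connected covering together with Lemma \ref{L:Ex1} --- produces, for each $m$, regular semi-simple $\gamma^{(i)}_1, \dots, \gamma^{(i)}_m \in \Gamma_i$ of infinite order whose centralizers $T^{(i)}_k$ are independent over $L$, satisfy $\theta_{T^{(i)}_k}(\Ga(L_{T^{(i)}_k}/L)) \supset W(\cG_i, T^{(i)}_k)$ (hence also the analogous inclusion over $K$), and are $K_v$-conjugate to the prescribed tori. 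By Lemma \ref{L:P1} these elements are multiplicatively independent, so the failure of $(C_i)$ for the relevant $m$, hence --- by padding relations with trivial characters --- for every larger $m$, forces them to be weakly contained in $\Gamma_{3-i}$; Theorem \ref{T:10} then shows that at least one $T^{(i)}_k$ (one whose character in the resulting relation is nontrivial) is $K$-isogenous to a maximal $K$-torus $T'$ of $\cG_{3-i}$, so $K_{T^{(i)}_k} = K_{T'}$ and $X(T^{(i)}_k) \otimes_{\Z} \Q \cong X(T') \otimes_{\Z} \Q$ as $\Ga(\overline{K}/K)$-modules. Using the \emph{same} local prescription at all $m$ indices, one may assume the distinguished index carries whatever local feature was built in.

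For (b): fix $v \in V^K$. If $v$ is nonarchimedean and $\cG_i$ is $K_v$-isotropic, I would prescribe at $v$ a maximal $K_v$-torus containing a maximal $K_v$-split torus; then $T^{(i)}_k$, and with it the $K$-isogenous torus $T' \subset \cG_{3-i}$, has $K_v$-split rank equal to $\mathrm{rk}_{K_v}\,\cG_i$, whence $\mathrm{rk}_{K_v}\,\cG_{3-i} \geqslant \mathrm{rk}_{K_v}\,\cG_i$; swapping the roles of $1$ and $2$ gives equality (the $K_v$-anisotropic case runs the same way with the inequality reversed). For $v$ real (the case $K_v = \C$ being trivial, as then $\mathrm{rk}_{K_v}\,\cG_i = n$) one argues identically after upgrading Theorem \ref{T:Ex2} so that the $\gamma^{(i)}_k$ are in addition $\mathbb{R}$-regular at $v$; this relies on the existence of $\mathbb{R}$-regular elements in Zariski-dense subgroups of $\cG_i(\mathbb{R})$, which makes the centralizers attain $\mathbb{R}$-split rank $\mathrm{rk}_{\mathbb{R}}\,\cG_i$.

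For (c): for any maximal $K$-torus $T$ of $\cG_i$ with $\theta_T(\Ga(K_T/K)) \supset W(\cG_i, T)$, the field $L_i$ is recovered as the fixed field in $K_T$ of $\theta_T^{-1}(W(\cG_i, T))$ --- the $\ast$-action of $\Ga(\overline{K}/K)$ on the Dynkin diagram of $\cG_i$ is $\theta_T$ composed with $\mathrm{Aut}(\Phi) \to \mathrm{Aut}(\Phi)/W$. Applying the mechanism with an $i$ for which $(C_i)$ fails, the distinguished $T^{(i)}_k$ is $K$-isogenous to a maximal $K$-torus $T'$ of $\cG_{3-i}$, so the $\Ga(\overline{K}/K)$-action on $M := X(T^{(i)}_k) \otimes_{\Z} \Q \cong X(T') \otimes_{\Z} \Q$ has a single image $\overline{\Gamma} \subset \mathrm{GL}(M)$ containing $W(\Phi(\cG_i, T^{(i)}_k))$ and $W(\Phi(\cG_{3-i}, T'))$ and contained in $\mathrm{Aut}(\Phi(\cG_i, T^{(i)}_k)) \cap \mathrm{Aut}(\Phi(\cG_{3-i}, T'))$. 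Since $W(\Phi(\cG_i, T^{(i)}_k))$ acts irreducibly on $M$, the $\overline{\Gamma}$-stable root system $\Phi(\cG_{3-i}, T')$ is a scalar multiple of $\Phi(\cG_i, T^{(i)}_k)$ or of its dual, so the two Weyl groups coincide as subgroups of $\mathrm{GL}(M)$; hence $\theta_{T^{(i)}_k}^{-1}(W) = \theta_{T'}^{-1}(W)$ and $L_1 = L_2$ (this is vacuous when one group is of type $B_n$ and the other of type $C_n$, where $L_1 = L_2 = K$).

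Finally (d), which I expect to be the most delicate point. Suppose $S_1 \neq S_2$, say $v \in S_1 \setminus S_2$; then $v$ is nonarchimedean and $\cG_1$ is $K_v$-isotropic (by the constraint on $S_1$), while $\Gamma_2$, being $(\cG_2, K, S_2)$-arithmetic with $v \notin S_2$, is relatively compact in $\cG_2(K_v)$, so $\chi(\gamma_2)$ is a unit at every place above $v$, for each $\gamma_2 \in \Gamma_2$ lying in a maximal $K$-torus and each character $\chi$. To contradict $(C_1)$ I would run the mechanism in $\Gamma_1$, prescribing at $v$ a maximal $K_v$-torus containing a maximal $K_v$-split torus $S$, and --- using the openness of the closure of $\Gamma_1$ at $v$ and the fact that the valuation map $T^{(1)}_k(K_v) \to X_{*}(S_k)$ is surjective onto a finite-index subgroup --- choosing the $\gamma^{(1)}_k$ so that their $v$-adic valuation vectors are regular in $X_{*}(S_k)$. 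Then, for the distinguished $k$, Lemma \ref{L:unscramble} yields $\delta(\gamma^{(1)}_k) = \prod_j \delta^{(2)}_j(\gamma^{(2)}_j)$ for all $\delta \in d_k X(T^{(1)}_k)$, with the right-hand side a unit above $v$; regularity of the valuation vector forces $\delta|_{S_k} = 0$ for every such $\delta$, i.e. $d_k X(T^{(1)}_k) \subset \ker( X(T^{(1)}_k) \to X(S_k) )$, which is impossible since the former has rank $n$ and the latter rank $n - \dim S_k < n$. Hence $(C_1)$ holds, contradicting $S_1 \neq S_2$. The hard part throughout will be arranging the strengthened forms of Theorem \ref{T:Ex2} required for (b) and (d) --- simultaneously imposing $\mathbb{R}$-regularity at archimedean places, or prescribed valuation vectors at $v$, while keeping the tori independent over $L$ and retaining $\theta \supset W$ --- and making sure, by a uniform local prescription at all indices, that the index surviving Theorem \ref{T:10} always carries the feature being tracked.
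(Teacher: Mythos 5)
Your overall skeleton is the right one (argue by contraposition; get (a) from Theorem \ref{T:F1}; produce arbitrarily many multiplicatively independent elements of $\Gamma_i$ whose centralizer tori carry prescribed local features; feed a putative weak containment into Theorem \ref{T:10} to get a $K$-isogeny of maximal tori and read off local ranks for (b), $L_1=L_2$ for (c), and a unit/non-unit contradiction at a place $v_0\in S_1\setminus S_2$ for (d)). But the engine you propose has a genuine gap exactly where you flag it. You route everything through Theorem \ref{T:Ex2}, which only prescribes behavior at \emph{nonarchimedean discrete} valuations and produces group elements via openness of the closure of $\Gamma_i$; to make (b) work at a real place $v_0$ you then need an unproved upgrade producing $\R$-regular elements simultaneously with independence of the tori and $\theta\supset W$, and to make (d) work you need elements with controlled (unbounded) $v_0$-adic behavior, again by an unproved strengthening. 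These are precisely the "hard parts" you defer, and without them (b) and (d) are not established. The paper avoids both strengthenings by exploiting arithmeticity directly: it uses Theorem \ref{T:Ex1} -- which applies at archimedean places because it only requires locally compact completions and prescribes \emph{tori}, not elements -- and then gets the elements from Dirichlet's unit theorem: $T_i(\cO_K(S_1))$ has a free part of rank $d_{T_i}(S_1)-\rk_K T_i>0$, so $\Gamma_1\cap T_i(K)$ contains elements of infinite order; for (d) one takes $\gamma_i$ whose image in $T_i(\cO_K(S_1))/T_i(\cO_K(S_1\setminus\{v_0\}))$ has infinite order, which is all the "$v_0$-adic control" needed.

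Two smaller points. In (d) your inference "regularity of the valuation vector forces $\delta\vert_{S_k}=0$ for \emph{every} $\delta\in d_kX(T^{(1)}_k)$" does not follow: an integral valuation vector can be orthogonal to nonzero characters of $S_k$. What you actually need (and what suffices) is only that $\langle\gamma_i\rangle$ is unbounded in $T_i(K_{v_0})$, so that \emph{some} $\delta\in d_iX(T_i)$ has non-unit value, contradicting that the right-hand side of the unscrambled relation consists of units because $v_0\notin S_2$; this is exactly the paper's argument. In (c) your claim that the Galois image contains $W(\Phi(\cG_{3-i},T'))$ is unjustified (and unnecessary); what you do need -- that $W(\Phi_1)$-stability of $\Phi_2$ forces the Weyl groups to coincide inside $\mathrm{GL}(M)$ -- is essentially Lemma 4.3 of \cite{PR6}, and with that your derivation of (c) from the torus isogeny is a legitimate alternative to the paper's route, which instead deduces (c) from (b) by a Chebotarev argument (a place where $\cG_2$ is split but $\cG_1$ is not would violate the equality of local ranks).
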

\begin{proof}
(a): Since the trace field $K_{\Gamma_i}$ coincides with $K_i$, our
assertion in case (a) fails to hold follows from Theorem \ref{T:F1}.
So, in the rest of the proof we may (and we will) assume that $K_1 =
K_2 =: K$. Then $\Gamma_i \subset \cG_i(K)$ for $i = 1, 2$.

\vskip2mm

(b): Suppose that for some $v_0 \in V^K$ we have
\begin{equation}\label{E:ArG0}
\mathrm{rk}_{K_{v_0}}\,\cG_1 > \mathrm{rk}_{K_{v_0}}\,\cG_2.
\end{equation}
We will now show that condition $(C_1)$ holds. Set $V = S_1 \cup \{
v_0 \},$ and for each $v \in V$ pick a maximal $K_v$-torus $T^{(v)}$
of $\cG_1$ satisfying $\mathrm{rk}_{K_v}\, T^{(v)} =
\mathrm{rk}_{K_v}\, \cG_1$. Given $m \geqslant 1$, we can use
Theorem \ref{T:Ex1} to find maximal $K$-tori $T_1, \ldots , T_m$ of
$\cG_1$ that are independent over $L_1$ and satisfy the following
properties for each $i\leqslant m$:

\vskip2mm

\noindent $\bullet$ $\theta_{T_i}(\Ga({L_1}_{T_i}/L_1)) = W(\cG_1 ,
T_i)$;

\vskip1mm

\noindent $\bullet$ $T_i$ is conjugate to $T^{(v)}$ by an element of
$\cG_1(K_v)$ for all $v \in V$.

\vskip2mm

\noindent We recall that by Dirichlet's Theorem (cf.\,\cite{PlR},
Theorem 5.12), for a $K$-torus $T$ and a finite subset $S$ of  $V^K$
containing $V_{\infty}^K$ we have
$$
T(\cO_K(S)) \simeq H \times \Z^{d_T(S) - \mathrm{rk}_K\: T},
$$
where $H$ is a finite group and $d_T(S) = \sum_{v \in S}
\mathrm{rk}_{K_v}\: T$. Since $\Gamma_1$ has been assumed to be Zariski-dense in $\cG_1$, it is infinite, and hence, $ \sum_{v
\in S_1} \mathrm{rk}_{K_v}\: \cG_1 > 0$. Now  we have
$$
d_{T_i}(S_1) := \sum_{v \in S_1} \mathrm{rk}_{K_v}\:T_i = \sum_{v
\in S_1} \mathrm{rk}_{K_v}\,\cG_1 > 0.
$$ As $T_i$ is clearly $K$-anisotropic, we conclude from the above that 
the group $T_i(\cO_K(S_1))$ contains a
subgroup isomorphic to $\Z^{d_{T_i}(S_1)}$, and so, in particular,  one
can find an element $\gamma_i \in \Gamma_1 \cap T_i(K)$ of infinite
order. We will use the elements $\gamma_1, \ldots , \gamma_m$ to verify property $(C_1)$. 
Indeed, these elements are
multiplicatively independent by Lemma \ref{L:P1}, and it remains to
show that they are not weakly contained in $\Gamma_2$. Otherwise,
there would exist a relation of the form
\begin{equation}\label{E:ArG3}
\chi_1(\gamma_1) \cdots \chi_m(\gamma_m) =
\chi^{(2)}_1(\gamma^{(2)}_1) \cdots
\chi^{(2)}_{m_2}(\gamma^{(2)}_{m_2}) \neq 1
\end{equation}
for some semi-simple elements $\gamma^{(2)}_1, \ldots ,
\gamma^{(2)}_{m_2} \in \Gamma_2 \subset \cG_2(K)$, some
characters $\chi_i \in X(T_i)$, some tori $T_j^{(2)}\subset \cG_2$ such that $\gamma_j^{(2)}\in T_j^{(2)}(K)$ and some characters $\chi^{(2)}_j \in X(T^{(2)}_j)$.
Since $w_1 = w_2$ and therefore $G_1$ and $G_2$ have the same
absolute rank, it would follow from Theorem \ref{T:10} that for some $i
\leqslant m$ and $j \leqslant m_2$ there is a $K$-isogeny $T^{(2)}_j \to T_i$, and therefore
$$
\mathrm{rk}_{K_{v_0}}\,T_i = \mathrm{rk}_{K_{v_0}}\,T^{(2)}_j.
$$
Since by our choice 
$$
\mathrm{rk}_{K_{v_0}}\,T_i = \mathrm{rk}_{K_{v_0}}\,\cG_1 \ \ \
\text{and} \ \ \ \mathrm{rk}_{K_{v_0}}\,T^{(2)}_j \leqslant
\mathrm{rk}_{K_{v_0}}\,\cG_2,
$$
this would contradict (\ref{E:ArG0}).

\vskip2mm

(c): Let us show that $L_1 = L_2$ automatically follows from the
fact that
\begin{equation}\label{E:ArG1}
\mathrm{rk}_{K_v}\,\cG_1 = \mathrm{rk}_{K_v}\,\cG_2 \ \ \text{for
all} \ \ v \in V^K
\end{equation}
(which we may assume in view of (b)). By symmetry, it is enough to
establish the inclusion $L_1 \subset L_2$. Assume the contrary. Then
for the finite Galois extension $L := L_1L_2$ of $K$ we can find a
nontrivial element $\sigma \in \Ga(L/L_2) \subset \Ga(L/K)$.
According to Theorem 6.7 of \cite{PlR}, there exists a finite subset
$S$ of $V^K$ such that for any $v \in V^K \setminus S$, the group
$\cG_2$ is quasi-split over $K_v$. Furthermore, by Chebotarev's
Density Theorem, there exists a nonarchimedean place $v \in V^K
\setminus S$ with the property that for its extension $\bar{v}$ to
$L$, the field extension $L_{\bar{v}}/K_v$ is unramified and its
Frobenius automorphism $\mathrm{Fr}(L_{\bar{v}} \vert K_v)$ is
$\sigma$. Then $L_2 \subset K_v$, and therefore $\cG_2$ is
$K_v$-split. On the other hand, $L_1 \not\subset K_v$, implying that
$\cG_1$ is not $K_v$-split. Since $G_1$ and $G_2$ have the same
absolute rank (as $w_1 = w_2$), this contradicts (\ref{E:ArG1}).

\vskip2mm

(d): If $S_1 \neq S_2$ then, by symmetry, we can assume that there
exists $v_0 \in S_1 \setminus S_2$ (any such $v_0$ is automatically
nonarchimedean). We will show that then condition $(C_1)$ holds. As
in part (b), for a given $m \geqslant 1$, we can pick maximal
$K$-tori $T_1, \ldots , T_m$ of $\cG_1$ so that they are independent
over $L_1$ and satisfy the following conditions for each $i
\leqslant m$:

\vskip2mm

\noindent $\bullet$ $\theta_{T_i}(\Ga({L_1}_{T_i}/L_1)) = W(\cG_1 ,
T_i)$;

\vskip1mm

\noindent $\bullet$ $\mathrm{rk}_{K_{v_0}}\,T_i =
\mathrm{rk}_{K_{v_0}}\,\cG_1$.

\vskip2mm

\noindent Due to our convention that $S_1$ does not contain any
nonarchimedean anisotropic places for $\cG_1,$ we have
$\mathrm{rk}_{K_{v_0}}\,T_i= \mathrm{rk}_{K_{v_0}}\, \cG_1 > 0$,
hence
$$
d_{T_i}(S_1 \setminus \{ v_0 \}) < d_{T_i}(S_1).
$$
Consequently, it follows from Dirichlet's Theorem (cf.\,(b)) that one
can pick $\gamma_i \in \Gamma_1 \cap T_i(\cO_K(S_1))$ so that its
image in $T_i(\cO_K(S_1))/T_i(\cO_K(S_1 \setminus \{ v_0 \}))$ has
infinite order for $i= 1, \ldots , m$. We claim that the elements
$\gamma_1, \ldots , \gamma_m$ verify property $(C_1)$.

As in (b), these elements are multiplicatively independent by Lemma
\ref{L:P1}, and we only need to show that they are not weakly
contained in $\Gamma_2$. Assume the contrary. Then there exists a
relation of the form (\ref{E:ArG3}) as in (b). Invoking Lemma
\ref{L:unscramble}, we see that there exist $i \leqslant m$ and $d_i
> 0$ such that for any $\lambda_i \in d_i X(T_i)$ there is a
relation of the form
\begin{equation}\label{E:ArG4}
\lambda_i(\gamma_i) = \prod_{j = 1}^{m_2}
\lambda^{(2)}_j(\gamma^{(2)}_j)
\end{equation}
with $\lambda^{(2)}_j \in X(T^{(2)}_j)$. On the other hand, by our
construction the image of $\gamma_i$ in
$T_i(\cO_K(S_1))/T_i(\cO_K(S_1 \setminus \{ v_0 \}))$ has infinite
order, and therefore the subgroup $\langle \gamma_i \rangle$ is
unbounded in $T_i(K_{v_0})$. It follows that there exists $\lambda_i
\in d_iX(T_i)$ for which $\lambda_i(\gamma_i) \in
\overline{K_{v_0}}$ is not a unit (with respect to the extension of
$v_0$). Pick for this $\lambda_i$ the corresponding expression
(\ref{E:ArG4}). Since $v_0 \notin S_2$, for each $j \leqslant m_2$,
the subgroup $\langle \gamma^{(2)}_j \rangle$ is bounded in
$T^{(2)}_j(K_{v_0})$. Hence, the value
$\lambda^{(2)}_j(\gamma^{(2)}_j) \in \overline{K_{v_0}}$ is a unit.
Then (\ref{E:ArG4}) leads to a contradiction.
\end{proof}

\vskip2mm

\noindent {\bf Remark 5.2.} The argument used in parts (b) and (d)
actually proves the following: Let $G_1$ and $G_2$ be absolutely
simple algebraic groups defined over a field $F$ of
characteristic zero such that $w_1 = w_2$, and let $\Gamma_i \subset
G_i(F)$ be a Zariski-dense $(\cG_i, K , S_i)$-arithmetic subgroup
for $i = 1, 2$. Furthermore, let $V$  be a finite subset of $V^K$ 
containing $S_1$ and let $L$ be a finite extension of $K$. If
condition $(C_1)$ does not hold then there exists a maximal
$K$-torus $T_1$ of $\cG_1$ satisfying $\theta_{T_1}(\Ga(L_{T_1}/L))
\supset W(\cG_1 , T_1)$ and $\mathrm{rk}_{K_v}\,T_1 =
\mathrm{rk}_{K_v}\, \cG_1$ for all $v \in V$  such that for some
maximal $K$-torus $T_2$ of $\cG_2$ there is a $K$-isogeny
$T_2 \to T_1$. We will use this statement below.

\addtocounter{thm}{1}

\vskip2mm

Here is an algebraic counterpart of Theorem 2 of the introduction.

\begin{thm}\label{T:ArG2}
Let $G_1$ and $G_2$ be two connected absolutely simple algebraic groups
of the same Killing-Cartan type different from $A_n$, $D_{2n+1}$ $(n
> 1)$ and $E_6$, defined over a field $F$ of characteristic zero, and
let $\Gamma_i \subset G_i(F)$ be a Zariski-dense $(\cG_i, K_i,
S_i)$-arithmetic subgroup for $i = 1, 2$. If $\Gamma_1$ and
$\Gamma_2$ are not commensurable (up to an $F$-isomorphism between
${G}_1$ and ${G}_2$) then condition $(C_i)$ holds
for at least one  $i \in \{1 , 2\}$.
\end{thm}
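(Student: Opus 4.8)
The plan is to combine Theorem~\ref{T:ArG1} with the classification of absolutely simple groups of the relevant Killing–Cartan types. By Theorem~\ref{T:ArG1}, if condition $(C_i)$ fails for both $i=1,2$, then all four conditions (a)--(d) hold: $K_1=K_2=:K$, $\mathrm{rk}_{K_v}\,\cG_1 = \mathrm{rk}_{K_v}\,\cG_2$ for all $v\in V^K$, $L_1=L_2=:L$, and $S_1=S_2=:S$. So the whole game reduces to showing that, for the types in question, these data force $\cG_1$ and $\cG_2$ to be $K$-isomorphic; then Proposition~2.5 of \cite{PR6} (quoted in this section) gives that $\Gamma_1$ and $\Gamma_2$ are commensurable up to an $F$-isomorphism between $G_1$ and $G_2$, contradicting our hypothesis. (One should also record that $w_1=w_2$ together with the type restriction forces $G_1$ and $G_2$ to be of the \emph{same} type, ruling out the $B_n/C_n$ coincidence, since neither $B_n$ nor $C_n$ is among the excluded types but they are also not in our list of "same type" if one of the groups is of the other type --- here we are given they have the same type, so this is automatic.)

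The key input is thus a statement of the form: \emph{an absolutely simple adjoint $K$-group of a fixed type not among $A_n$, $D_{2n+1}$ $(n>1)$, $E_6$ is determined up to $K$-isomorphism by (i) its quasi-split inner form, equivalently the minimal Galois extension $L/K$ splitting the $*$-action (condition (c)), and (ii) its local ranks $\mathrm{rk}_{K_v}$ at all $v\in V^K$ (condition (b)).} For the types outside the excluded list the outer automorphism group is trivial except for $D_n$ with $n$ even ($n\neq 4$), where it is $\Z/2\Z$, and $D_4$, where it is $S_3$; types $B_n$, $C_n$, $E_7$, $E_8$, $F_4$, $G_2$ have trivial outer automorphisms. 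Once the outer form (i.e.\ $L$) is fixed, the group is an inner form of its quasi-split inner $L/K$-form, hence classified by a class in $H^1(K, \overline{G}^{\mathrm{ad}})$, and — crucially — for these types that class is detected by its localizations, which in turn are pinned down by the local ranks together with the local Tits indices; the Hasse principle for such $H^1$ (Kneser, Harder, Chernousov for $E_8$; see \cite{PlR}, Ch.~6) then gives global $K$-isomorphism. I would cite the relevant results from \cite{PR6}, \S8, where precisely this type-by-type analysis was carried out (the same list of exceptional types appears there), rather than redo it.

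Concretely, the steps in order are: (1) invoke Theorem~\ref{T:ArG1} to reduce to the situation where (a)--(d) all hold; (2) note that $w_1=w_2$ plus "same Killing–Cartan type" means $G_1,G_2$ have literally the same type, and $L_1=L_2$ means $\cG_1,\cG_2$ are inner twists of one another over $L$, i.e.\ inner forms of a common quasi-split $K$-group $\cG_0$; (3) for the types in question, use the triviality (or in the $D_{2n}$, $D_4$ cases, the control via $L$) of outer forms to place both $\cG_1$ and $\cG_2$ as classes $\xi_1, \xi_2 \in H^1(K, \cG_0^{\mathrm{ad}})$; (4) use condition (b) — equality of local ranks at all $v$ — together with the fact that for these types a local inner form is determined by its rank (this uses that the relevant local invariants, e.g.\ the underlying division algebra degree for classical types or the local index for exceptional ones, are recovered from the $K_v$-rank), to conclude $\xi_1$ and $\xi_2$ have the same image in every $H^1(K_v, \cG_0^{\mathrm{ad}})$; (5) apply the Hasse principle for $H^1$ of adjoint (or simply connected) semisimple $K$-groups over the number field $K$ to get $\xi_1 = \xi_2$, hence $\cG_1 \simeq_K \cG_2$; (6) with $S_1=S_2$ now in hand, Proposition~2.5 of \cite{PR6} yields commensurability of $\Gamma_1$ and $\Gamma_2$ up to an $F$-isomorphism, contradicting the hypothesis, so $(C_i)$ must hold for some $i$.

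The main obstacle is step~(4): verifying that, for each admissible type and at each place $v$, the local group $_{K_v}\cG_i$ is determined up to $K_v$-isomorphism by its $K_v$-rank once the inner/outer class over $K_v$ is fixed. This is a type-by-type check using local Tits indices and is genuinely where the exclusion of $A_n$, $D_{2n+1}$ $(n>1)$, $E_6$ enters — for those excluded types there can be non-isomorphic inner forms with the same local ranks everywhere (e.g.\ quaternionic versus non-quaternionic phenomena, or $*$-action subtleties), which is exactly why they are handled separately in Theorem~\ref{T:ArG3}. In the actual write-up I expect this verification is imported wholesale from \cite{PR6}, \S8, so the proof here should be short: cite Theorem~\ref{T:ArG1}, then cite the appropriate lemma/proposition of \cite{PR6} identifying $\cG_1$ and $\cG_2$, then cite Proposition~2.5 of \cite{PR6}.
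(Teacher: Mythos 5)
Your reduction via Theorem~\ref{T:ArG1} to the situation where $K_1=K_2$, $S_1=S_2$, $L_1=L_2$ and $\mathrm{rk}_{K_v}\cG_1=\mathrm{rk}_{K_v}\cG_2$ for all $v$, followed by an appeal to [PR6] to get $\cG_1\simeq\cG_2$ over $K$ and then Proposition~2.5 of [PR6], is exactly the paper's skeleton, and your steps (3)--(5) do match what happens for the types $A_1$, $B_n$, $C_n$, $E_7$, $E_8$, $F_4$, $G_2$ (the paper cites the proof of Theorem~4 in [PR6], \S 6, for precisely this). The gap is type $D_{2n}$ (including $D_4$), which is \emph{not} excluded by the hypotheses and which your step (4) cannot handle: for $D_{2n}$ the local rank does not determine the local form. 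Over $\R$ an orthogonal and a quaternionic form of type $D_{2n}$ can have the same rank, for $D_4$ the same ambiguity occurs even over nonarchimedean fields (rank $2$ is attained by both an orthogonal and a quaternionic form), and triality forms of $D_4$ add a further layer; condition (b) gives you only ranks, not local Tits indices. Moreover, even if you knew $\cG_1\simeq\cG_2$ over every $K_v$, the Hasse-principle step (5) is not formally available here: local isomorphism identifies the two classes only in $H^1(K_v,\mathrm{Aut}\,\cG_0)$, and for $D_{2n}$ the outer automorphism group is nontrivial, so injectivity of $H^1(K,\overline{G})\to\prod_v H^1(K_v,\overline{G})$ does not settle the global isomorphism question.

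The paper closes this gap with a genuinely different mechanism for $D_{2n}$: the failure of both $(C_1)$ and $(C_2)$ is exploited not just through conditions (a)--(d) but through Remark~5.2 (extracted from the proofs of parts (b) and (d) of Theorem~\ref{T:ArG1}), which produces a maximal $K$-torus $T_1$ of $\cG_1$ with $\theta_{T_1}(\Ga(L_{T_1}/L))\supset W(\cG_1,T_1)$ and $\mathrm{rk}_{K_v}T_1=\mathrm{rk}_{K_v}\cG_1$ at a suitable finite set of places, together with a maximal $K$-torus $T_2$ of $\cG_2$ admitting a $K$-isogeny $T_2\to T_1$. Garibaldi's theorem (Theorem~\ref{T:D2n-G}, in the form of Theorem~\ref{T:D2n-G'}) then converts this torus-level coincidence into $\cG_1\simeq\cG_2$ over $K$. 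This torus input is not a cosmetic convenience: it is exactly the extra data, beyond local ranks and the field $L$, that is needed for $D_{2n}$, and it is why the earlier version of this result in [PR6] excluded $D_4$. So your write-up would be fine for the listed types with trivial outer automorphisms, but as stated it does not prove the theorem for $D_{2n}$, and you would need to add the Remark~5.2 plus Garibaldi step (or an equivalent substitute) to cover that case.
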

\begin{proof}
If either $K_1 \neq K_2$ or $S_1 \neq S_2$, condition $(C_i)$ for
some $i \in \{1 , 2\}$ holds by Theorem \ref{T:ArG1}. So, we may
assume that
\begin{equation}\label{E:ArG5}
K_1 = K_2 =: K \ \ \ \text{and} \ \ \ S_1 = S_2 = S.
\end{equation}
We first treat the case where the common type of $G_1$ and $G_2$ is
not $D_{2n}$ $(n \geqslant 2)$, i.e. it is one of the following:
$A_1$, $B_n$, $C_n$ $(n \geqslant 2)$, $E_7$, $E_8$, $F_4$, $G_2$.
According to Theorem \ref{T:ArG1}(b), if $\mathrm{rk}_{K_v}\,\cG_1
\neq \mathrm{rk}_{K_v}\, \cG_2$ for at least one $v \in V^K$, then
condition $(C_i)$ again holds for at least one $i \in \{1 , 2\}$.
Thus, we may assume that
\begin{equation}\label{E:ArG6}
\mathrm{rk}_{K_v}\, \cG_1 = \mathrm{rk}_{K_v}\, \cG_2 \ \ \
\text{for all} \ \ \ v \in V^K.
\end{equation}
As we discussed in (\cite{PR6}, \S 6, proof of Theorem 4), for the
types under consideration (\ref{E:ArG6}) implies that $\cG_1 \simeq
\cG_2$ over $K$, combining which with (\ref{E:ArG5}), we obtain that
$\Gamma_1$ and $\Gamma_2$ are commensurable (cf.\,\cite{PR6},
Proposition 2.5).

\vskip2mm

Consideration of groups of type $D_{2n}$ relies on some additional
results. In an earlier version of this paper, these were derived
from \cite{PR7} for $n > 2$ (and then Theorem \ref{T:ArG2} was also
formulated for type $D_{2n}$ with $n > 2$). Recently, Skip Garibaldi
\cite{Gar} gave an alternate proof of the required fact which
works for all $n \geqslant 2$ (including triality forms of type
$D_4$). This led to the current (complete) form of Theorem
\ref{T:ArG2} and also showed that groups of type $D_4$ do not need
to be excluded in Theorem 4 of \cite{PR6} and its (geometric)
consequences (such as Theorem 8.16 of \cite{PR6}). Here is the precise formulation
of Garibaldi's result.
\begin{thm}\label{T:D2n-G}
{\rm (\cite{Gar}, Theorem 14)} Let $G_1$ and $G_2$ be connected absolutely simple adjoint groups 
of type $D_{2n}$ for some $n \geqslant 2$ over a global field $K$
such that $G_1$ and $G_2$ have the same quasi-split inner form --
i.e., the smallest Galois extension of $K$ over which $G_1$ is of
inner type is the same as for $G_2.$ If there exists a maximal torus
$T_i$ in $G_i$ for $i = 1$ and $2$ such that

\vskip2mm

\noindent {\rm (1)} \parbox[t]{12.5cm}{there exists a $K_{\rm
sep}$-isomorphism $\phi \colon G_1 \to G_2$ whose restriction to
$T_1$ is a $K$-isomorphism $T_1 \to T_2;$ and}

\vskip1mm

\noindent {\rm (2)} \parbox[t]{12.5cm}{there is a finite set
$\mathcal{V}$ of places of $K$ such that: \newline {\rm (a)} For all
$v \notin \mathcal{V},$ $G_1$ and $G_2$ are quasi-split over $K_v,$
\newline {\rm (b)} \parbox[t]{12cm}{For all $v \in \mathcal{V},$ $(T_i)_{K_v}$ contains a
maximal $K_v$-split subtorus in $(G_i)_{K_v};$}}

\vskip2mm

\noindent then $G_1$ and $G_2$ are isomorphic over $K.$
\end{thm}

\vskip2mm

We will actually use the following consequence of the preceding theorem. 

\vskip2mm

\begin{thm}\label{T:D2n-G'}  
Let $G_1$ and $G_2$
be connected absolutely simple algebraic groups of type $D_{2n}$ over a number
field $K$ such that

\vskip2mm

\noindent {\rm (a)} $\mathrm{rk}_{K_v}\, G_1 = \mathrm{rk}_{K_v}\,
G_2$ for all $v \in V^K;$

\vskip1mm

\noindent {\rm (b)} \parbox[t]{12.5cm}{$L_1 = L_2$ where $L_i$ is the
minimal Galois extension of $K$ over which $G_i$ becomes an inner
form.}

\vskip2mm

\noindent Let $\mathcal{V} \subset V^K$ be a finite set of places
such that $G_1$ is quasi-split over $K_v$ for $v \in V^K \setminus
\mathcal{V}.$ Let $T_1$ be a maximal $K$-torus of $G_1$ satisfying

\vskip2mm

\noindent  $(\alpha)$\  $\theta_{T_1}(\Ga(K_{T_1}/K)) \supset W(G_1
, T_1),$

\vskip1mm

\noindent $(\beta)$\  $\mathrm{rk}_{K_v}\, T_1 = \mathrm{rk}_{K_v}\,
G_1$ for all $v \in \mathcal{V}.$

\vskip2mm

\noindent If there exists a $K$-isogeny $\varphi \colon T_2 \to T_1$
from a maximal $K$-torus $T_2$ of $G_2$, then $G_1$ and $G_2$ are
isogenous over $K.$
\end{thm}

\vskip3mm

To derive Theorem \ref{T:D2n-G'} from Theorem \ref{T:D2n-G}, we
can assume that both $G_1$ and
$G_2$ are adjoint. Now note that it follows from Lemma 4.3 in \cite{PR6}
that, due to condition $(\alpha),$ one can assume without any loss
of generality that the comorphism $\varphi^* \colon X(T_1) \to
X(T_2)$ satisfies $\varphi^*(\Phi(G_1 , T_1)) =  \Phi(G_2 , T_2).$
Then $\varphi$ is actually a $K$-isomorphism of tori that
extends to a $\overline{K}$-isomorphism $\phi \colon G_2 \to
G_1.$ So, we can use Theorem \ref{T:D2n-G} to obtain Theorem \ref{T:D2n-G'}. 

\vskip3mm

To complete the proof of Theorem \ref{T:ArG2}, we observe that if
neither $(C_1)$ nor $(C_2)$ holds,  then according to
Theorem \ref{T:ArG1}, conditions (a) and (b) of Theorem
\ref{T:D2n-G'} are satisfied for $\cG_1$ and $\cG_2$. Fix a finite
set of places $V \subset V^K$ that contains $S_1$ and is big enough
so that $\cG_1$ and $\cG_2$ are quasi-split over $K_v$ for all $v
\in V^K \setminus V$. Using Remark 5.2, we can find a maximal
$K$-torus $T_1$ of $\cG_1$ that satisfies conditions $(\alpha)$ and
$(\beta)$ of Theorem \ref{T:D2n-G'} and a maximal $K$-torus $T_2$ of $G_2$ 
which is isogeneous to $T_1$ over $K$. 
Then $\cG_1 \simeq \cG_2$ over $K$ by Theorem \ref{T:D2n-G'},
making $\Gamma_1$ and $\Gamma_2$ commensurable as above.
\end{proof}

\vskip3mm

Our next result contains more restrictions on the arithmetic groups
$\Gamma_1$ and $\Gamma_2$ given the fact that both the conditions $(C_1)$ and $(C_2)$ 
fail to hold.
\begin{thm}\label{T:ArG3}
Let $G_1$ and $G_2$ be two connected absolutely simple algebraic groups
over a field $F$ of characteristic zero such that $w_1 = w_2$, and
let $\Gamma_i \subset G_i(F)$ be a Zariski-dense $(\cG_i, K,
S)$-arithmetic subgroup for $i = 1, 2$. If both $(C_1)$ and $(C_2)$ fail 
to hold, then $\mathrm{rk}_K\,
\cG_1 = \mathrm{rk}_K\, \cG_2$. Moreover, if $G_1$ and $G_2$ are of
the same Killing-Cartan type, then the Tits indices $\cG_1/K_v$ and
$\cG_2/K_v$ are isomorphic for all $v \in V^K$, and the Tits indices
$\cG_1/K$ and $\cG_2/K$ are isomorphic.
\end{thm}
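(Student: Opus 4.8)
The plan is to feed the hypothesis ``$(C_1)$ and $(C_2)$ both fail'' into Theorem~\ref{T:ArG1} and Remark~5.2, and then to convert the resulting isogeny of generic maximal tori into equalities of Tits indices by a local--global argument. First I would record what is immediate. We may assume $G_i = \cG_i$ is of adjoint type, since passing from $\Gamma_i$ to its image in $\cG_i$ affects neither the conditions $(C_i)$ nor the local or global Tits indices (in particular the $K_v$-ranks) of $\cG_i$. Because $\Gamma_i$ is $(\cG_i, K, S)$-arithmetic, conditions (a) and (d) of Theorem~\ref{T:ArG1} hold trivially; since neither $(C_1)$ nor $(C_2)$ holds, that theorem then forces conditions (b) and (c), i.e.
\[
\mathrm{rk}_{K_v}\,\cG_1 = \mathrm{rk}_{K_v}\,\cG_2 \ \ \text{for all } v \in V^K, \qquad L_1 = L_2 =: L.
\]

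Now suppose $G_1$ and $G_2$ are of the same Killing-Cartan type. Fix a finite set $V \subseteq V^K$ containing $S$ large enough that both $\cG_1$ and $\cG_2$ are quasi-split over $K_v$ for every $v \notin V$, and a finite extension $L'/K$ splitting $\cG_1$. Applying Remark~5.2 (available since $(C_1)$ fails) with this $V$ and $L'$ produces a maximal $K$-torus $T_1$ of $\cG_1$ with $\theta_{T_1}(\Ga(L'_{T_1}/L')) \supset W(\cG_1, T_1)$ and $\mathrm{rk}_{K_v}\,T_1 = \mathrm{rk}_{K_v}\,\cG_1$ for all $v \in V$, a maximal $K$-torus $T_2$ of $\cG_2$, and a $K$-isogeny $\varphi \colon T_2 \to T_1$. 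By Lemma~4.3 of \cite{PR6} the genericity of $T_1$ lets us assume $\varphi^{*}(\Phi(\cG_1, T_1)) = \Phi(\cG_2, T_2)$, and then (the $\cG_i$ being adjoint) $\varphi$ is a $K$-isomorphism extending to a $\overline{K}$-isomorphism $\cG_2 \to \cG_1$. In particular $\varphi^{*}$ is a $\Ga(\overline{K}/K)$-equivariant bijection $\Phi(\cG_1, T_1) \to \Phi(\cG_2, T_2)$, which identifies the $*$-actions of $\cG_1$ and $\cG_2$ over every completion $K_v$. For $v \in V$ one has $\mathrm{rk}_{K_v}\,T_2 = \mathrm{rk}_{K_v}\,T_1 = \mathrm{rk}_{K_v}\,\cG_1 = \mathrm{rk}_{K_v}\,\cG_2$, so $T_1 \otimes K_v$ and $T_2 \otimes K_v$ are maximally split maximal tori; as $\varphi \otimes K_v$ is a $K_v$-isomorphism carrying the maximal $K_v$-split subtorus of $T_2 \otimes K_v$ onto that of $T_1 \otimes K_v$ (and $\Phi(\cG_1, T_1)$ onto $\Phi(\cG_2, T_2)$), it identifies the Tits indices of $\cG_1 \otimes K_v$ and $\cG_2 \otimes K_v$. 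For $v \notin V$ both groups are quasi-split over $K_v$ with the common $*$-action, so their Tits indices agree there as well. Hence $\cG_1/K_v$ and $\cG_2/K_v$ have isomorphic Tits indices for every $v \in V^K$.

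Finally, the Tits index of an absolutely simple group over a number field is determined by the collection of its Tits indices over the completions --- this is the Hasse principle for the existence of a parabolic $K$-subgroup of a prescribed $*$-stable type (cf.\ \cite{PlR}; type by type it reduces to classical Hasse principles for quadratic forms and for central simple algebras). Combining this with the previous paragraph and the $\Ga(\overline{K}/K)$-equivariant identification of Dynkin diagrams furnished by $\varphi^{*}$, the sets of distinguished orbits over $K$ correspond, so $\cG_1/K$ and $\cG_2/K$ have isomorphic Tits indices; in particular $\mathrm{rk}_K\,\cG_1 = \mathrm{rk}_K\,\cG_2$ when the types coincide. In the remaining case one of $\cG_1, \cG_2$ is a form of a special orthogonal group (type $B_n$) and the other a form of a symplectic group (type $C_n$), and the equality of $K$-ranks follows from the local rank equalities above via the Hasse principle for isotropy of quadratic and of hermitian forms (which realizes the $K$-Witt index as the minimum of the local ones); cf.\ \cite{GR}.

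I expect the main obstacle to be the middle step --- converting the bare $K$-isogeny of tori supplied by Remark~5.2 into a $\overline{K}$-isomorphism of groups restricting to an isomorphism of maximal tori that are maximally split at all the relevant places, which hinges on choosing the finite set $V$ correctly and applying Lemma~4.3 of \cite{PR6}. Once that is in hand, the comparison of local and global Tits indices is local--global bookkeeping.
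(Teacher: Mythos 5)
Your overall route is the paper's: failure of $(C_1)$ and $(C_2)$ is fed into Theorem \ref{T:ArG1} and Remark 5.2 to produce a generic maximal $K$-torus $T_1$ of $\cG_1$, maximally split at all places of a suitable finite set $V$, together with a $K$-isogeny from a maximal $K$-torus of $\cG_2$; the comparison of local indices at $v\in V$ via the split subtori, the quasi-split places outside $V$, and the passage to the global index via the local--global principle for distinguished orbits (Proposition 7.2 of \cite{PR6}, which is the ``Hasse principle'' you invoke) is exactly the content of Theorem \ref{T:equal-rank}, which the paper states separately and proves in the Appendix; you are in effect re-proving it inline. The mixed $B_n$/$C_n$ rank statement via the minimum formula $\mathrm{rk}_K = \min_v \mathrm{rk}_{K_v}$ also matches the paper.

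There is, however, a genuine gap in the same-type case: you apply ``by Lemma 4.3 of \cite{PR6} we may assume $\varphi^*(\Phi(\cG_1,T_1))=\Phi(\cG_2,T_2)$, hence $\varphi$ extends to a $\overline{K}$-isomorphism of the groups'' uniformly to all types. That scaling step is only available when the common type is \emph{not} one of $B_2=C_2$, $F_4$, $G_2$: for these self-dual types with two root lengths the comorphism of the isogeny, even after rescaling, may carry the root system onto a configuration in which long and short roots are interchanged (a rescaled copy of the dual root system), and no multiple of $\varphi$ identifies the root systems or extends to a group isomorphism. The paper is explicit about this restriction when it uses Lemma 4.3/Remark 4.4 of \cite{PR6} in the proof of Theorem \ref{T:ADE6}, and this is precisely why the Appendix proof of Theorem \ref{T:equal-rank} treats types $B_n$, $C_n$, $E_8$, $F_4$, $G_2$ by a different argument: for these types $\mathrm{rk}_K\,G=\min_{v}\mathrm{rk}_{K_v}G$ and the Tits index over a local or global field is determined by the rank, so the local rank equalities from Theorem \ref{T:ArG1}(b) already force isomorphic local and global indices. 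Your proof should therefore split off the same-type cases $B_2=C_2$, $F_4$, $G_2$ (and may as well include $B_n$, $C_n$, $E_8$) and handle them by this rank argument --- which you already use for the mixed $B_n$/$C_n$ case --- reserving the isogeny-scaling argument for types with a single root length and $E_7$, where it is legitimate. With that correction (and with the coherent-ordering bookkeeping at the places $v\in V$ spelled out as in the Appendix), the argument is complete.
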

\begin{proof}
The proof relies on the following statement which was actually
established in \cite{PR6}, \S 7 (although it was not stated there
explicitly).
\begin{thm}\label{T:equal-rank}
Let $G_1$ and $G_2$ be two connected absolutely  simple algebraic
$K$-groups, let $L_i$ be the minimal Galois extension of $K$ over
which $G_i$ are of inner type, and let   $\mathcal{V}$ 
be a finite subset of $V^K$ such that  both $G_1$ and $G_2$ are 
$K_v$-quasi-split for all $v \notin \mathcal{V}.$ Furthermore, let
$T_i$ be a maximal $K$-torus of $G_i,$ where $i = 1, 2,$ such that

\vskip2mm

{\rm (1)} $\theta_{T_i}(\Ga(K_{T_i}/K)) \supset W(G_i , T_i);$

\vskip1mm

{\rm (2)} $\mathrm{rk}_{K_v}\,T_i = \mathrm{rk}_{K_v}\, G_i$ for
all $v \in \mathcal{V}.$

\vskip2mm

\noindent If $L_1 = L_2$ and there exists a $K$-isogeny $T_1
\to T_2$, then $\mathrm{rk}_K\: G_1 = \mathrm{rk}_K\: G_2.$ Moreover,
if $G_1$ and $G_2$ are of the same Killing-Cartan type then the Tits
indices $G_1/K_v$ and $G_2/K_v$ are isomorphic for all $v \in V^K,$
and the Tits indices of $G_1/K$ and $G_2/K$ are isomorphic.
\end{thm}
For the reader's convenience, we will give a proof of this theorem in the Appendix.

\vskip3mm

To derive Theorem \ref{T:ArG3} from Theorem \ref{T:equal-rank}, we
basically mimic the argument used to consider type $D_{2n}$ in
Theorem \ref{T:ArG2}. More precisely, we pick a finite set $V$ of places
of $K$ containing $S_1$ so that the groups $\cG_1$ and
$\cG_2$ are quasi-split over $K_v$ for all $v \in V^K \setminus V$.
Since by our assumption both $(C_1)$ and $(C_2)$ fail to hold, 
we can use Remark 5.2 to find of maximal $K$-torus $T_1$
of $\cG_1$ that satisfies conditions (1) and (2) of Theorem
\ref{T:equal-rank} for $i = 1$, and a maximal $K$-torus $T_2$ of $\cG_2$ 
which is isogeneous to $T_1$ over $K$.  
Since $\mathrm{rk}_{K_v}\,\cG_1 = \mathrm{rk}_{K_v}\,\cG_2$, we
obtain that condition (2) holds also for $i = 2$. Furthermore,
condition (1) for $i = 1$ combined with the fact that $L_1 = L_2$,
by order consideration, yields that the inclusion
$\theta_{T_2}(\Ga({L_2}_{T_2}/L_2)) \subset W(\cG_2 , T_2)$ is in fact
an equality, so (2) holds for $i = 2$ as well. Now, applying Theorem
\ref{T:equal-rank} we obtain Theorem \ref{T:ArG3}.
\end{proof}

\vskip2mm

We conclude this section with a variant of Theorem \ref{T:ArG3}
which has an interesting geometric application (see Theorem 5 in the Introduction; 
this theorem will be proved in  \S\ref{S:FGeod}). Let $\Gamma_i$ is a Zariski-dense $(\cG_i, K_i,
S_i)$-arithmetic subgroup of $G_i,$ and assume that $\cG_1$ is
$K_1$-isotropic and $\cG_2$ is $K_2$-anisotropic. It follows from
Theorem \ref{T:ArG1} (for $K_1 \neq K_2$) and Theorem \ref{T:ArG3}
(for $K_1 = K_2$) that then condition $(C_i)$ holds for at least one
$i \in \{1 , 2\}.$ In fact, assuming that $w_1 = w_2$, one can
always guarantee that condition $(C_1)$ holds:
\begin{thm}\label{T:isotr}
Let $G_1$ and $G_2$ be two connected absolutely simple algebraic groups
with  $w_1 = w_2$. Let $\Gamma_i$ be a Zariski-dense $(\cG_i,
K_i, S_i)$-arithmetic subgroup of $G_i$ for $i = 1, 2,$ and assume
that $\cG_1$ is $K_1$-isotropic and $\cG_2$ is $K_2$-anisotropic.
Then property $(C_1)$ holds.
\end{thm}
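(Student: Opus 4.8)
The plan is to verify $(C_1)$ directly, following the pattern of the proofs of Theorems~\ref{T:ArG1}--\ref{T:ArG3}; the point is that the global anisotropy of $\cG_2$ together with the global isotropy of $\cG_1$ breaks the symmetry between $\Gamma_1$ and $\Gamma_2$ and forces one to land in $\Gamma_1$. First I would dispose of the case $K_1 \neq K_2$: if $K_1 \not\subset K_2$ then $(C_1)$ already holds by Theorem~\ref{T:F1}(ii) (recall $w_1 = w_2$), and if $K_1 \subsetneq K_2$ the argument below goes through after replacing $K$ by the compositum $K_1 K_2$ throughout and applying Theorem~\ref{T:10} over that field. So assume $K_1 = K_2 =: K$, hence $\Gamma_i \subset \cG_i(K)$.

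Next I would reduce to the case where $\cG_2$ is isotropic over every completion $K_v$. Suppose $\cG_2$ is anisotropic over $K_{v_0}$ for some $v_0 \in V^K$. If $v_0$ is nonarchimedean then, by the constraint on $S$ in the definition of a $(\cG_2, K, S)$-arithmetic subgroup, $v_0 \notin S$; since $\cG_1$ is $K$-isotropic and hence $K_{v_0}$-isotropic, $\mathrm{rk}_{K_{v_0}}\cG_1 > 0 = \mathrm{rk}_{K_{v_0}}\cG_2$, so $(C_1)$ holds by Theorem~\ref{T:ArG1}(b). If $v_0$ is archimedean, then $v_0 \in S$ and $\cG_2(K_{v_0})$ is compact, so the image of $\Gamma_2$ in $\cG_2(K_{v_0})$ is relatively compact; I would then mimic the proof of Theorem~\ref{T:ArG1}(d), using Theorem~\ref{T:Ex1} and Dirichlet's theorem to produce, for each $m$, multiplicatively independent $\gamma_1, \dots, \gamma_m \in \Gamma_1$ of infinite order lying in Galois-generic independent maximal $K$-tori $T_i$ of $\cG_1$ with $\mathrm{rk}_{K_{v_0}} T_i = \mathrm{rk}_{K_{v_0}} \cG_1 > 0$ and with $\langle \gamma_i \rangle$ unbounded in $T_i(K_{v_0})$. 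A weak containment relation would then, after unscrambling via Lemma~\ref{L:unscramble}, equate an element of $\overline{K_{v_0}}$ that is not a unit (some $\lambda_i(\gamma_i)$) with a product of units (the values $\lambda_j^{(2)}(\gamma_j^{(2)})$, which are units at $v_0$ since $\Gamma_2$ is relatively compact there) -- a contradiction. So $(C_1)$ holds in this case as well.

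There remains the case where $\cG_2$ is $K_v$-isotropic for all $v$ while being $K$-anisotropic, and here I would argue by contradiction, in parallel with the proof of Theorem~\ref{T:ArG3}. Suppose $(C_1)$ fails. By the argument of Theorem~\ref{T:ArG1}(b) this forces $\mathrm{rk}_{K_v} \cG_1 \leqslant \mathrm{rk}_{K_v} \cG_2$ for all $v$. Fix a finite $V \subset V^K$ containing $S$ and all places at which $\cG_1$ or $\cG_2$ fails to be quasi-split, and put $L = L_1 L_2$. By Remark~5.2 (applicable since $(C_1)$ fails) there is a maximal $K$-torus $T_1$ of $\cG_1$ with $\theta_{T_1}(\Ga(L_{T_1}/L)) \supset W(\cG_1, T_1)$ and $\mathrm{rk}_{K_v} T_1 = \mathrm{rk}_{K_v} \cG_1$ for $v \in V$, together with a maximal $K$-torus $T_2$ of $\cG_2$ admitting a $K$-isogeny $T_2 \to T_1$; since $\cG_i$ is inner over $L$, an order count using $w_1 = w_2$ and $L_{T_1} = L_{T_2}$ then gives $\theta_{T_2}(\Ga(L_{T_2}/L)) = W(\cG_2, T_2)$, while $\mathrm{rk}_{K_v} T_2 = \mathrm{rk}_{K_v} T_1$ for all $v$. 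After checking that $L_1 = L_2$ and that $\mathrm{rk}_{K_v}\cG_1 = \mathrm{rk}_{K_v}\cG_2$ for $v \in V$ -- so that $T_1, T_2$ satisfy conditions $(1)$, $(2)$ of Theorem~\ref{T:equal-rank} -- that theorem yields $\mathrm{rk}_K \cG_1 = \mathrm{rk}_K \cG_2$, contradicting the assumption that $\cG_1$ is $K$-isotropic while $\cG_2$ is $K$-anisotropic.

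The main obstacle is this last step -- passing from ``$(C_1)$ or $(C_2)$ holds'' (which is immediate from Theorems~\ref{T:ArG1} and \ref{T:ArG3}) to ``$(C_1)$ holds'', i.e.\ ruling out the asymmetric scenario in which $(C_1)$ fails but $(C_2)$ may hold. Concretely, the delicate point is to show, in the final case, that failure of $(C_1)$ already forces $L_1 = L_2$ and the equality $\mathrm{rk}_{K_v}\cG_1 = \mathrm{rk}_{K_v}\cG_2$ on the relevant finite set, so that the local-to-global rank result of Theorem~\ref{T:equal-rank} can be invoked and made to collide with the global anisotropy of $\cG_2$. This is precisely where the anisotropy of $\cG_2$ (equivalently, the cocompactness of $\Gamma_2$) enters in an essential way, since the maximal tori furnished by Theorem~\ref{T:Ex2} are necessarily $K$-anisotropic and so no naive comparison of ranks is available.
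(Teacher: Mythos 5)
Your plan breaks down at exactly the step you yourself flag as ``the delicate point,'' and that step is not a technicality but the actual content of the theorem. In your final case you assume $(C_1)$ fails, use Remark~5.2 to produce a generic maximal $K$-torus $T_1$ of $\cG_1$ of maximal local rank on $V$ together with a $K$-isogeny $T_2 \to T_1$, and then propose to invoke Theorem~\ref{T:equal-rank} ``after checking that $L_1 = L_2$ and that $\mathrm{rk}_{K_v}\,\cG_1 = \mathrm{rk}_{K_v}\,\cG_2$ for $v \in V$.'' Neither fact is checkable from what you have: failure of $(C_1)$ alone (and you may not assume failure of $(C_2)$, since the theorem must produce $(C_1)$ specifically) gives only the one-sided inequality $\mathrm{rk}_{K_v}\,\cG_1 \leqslant \mathrm{rk}_{K_v}\,\cG_2$, and the Chebotarev argument of Theorem~\ref{T:ArG1}(c) then yields at best $L_2 \subset L_1$, not $L_1 = L_2$. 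So the hypotheses of the symmetric Theorem~\ref{T:equal-rank} are unavailable and your contradiction never materializes. The paper closes precisely this gap by proving and using the asymmetric variant Theorem~\ref{T:equal-rank}$'$: assuming only $K_1 \subset K_2$, $L_2 \subset K_2L_1$, $w_1 = w_2$, $\mathrm{rk}_{K_1}\,\cG_1 > 0$, quasi-splitness of $\cG_2$ outside $\mathcal{V}_2$, and conditions (1), (2) on $T_1$, the existence of a $K_2$-isogeny $T_2 \to T_1$ already forces $\mathrm{rk}_{K_2}\,\cG_2 > 0$; its proof (Appendix) transfers a distinguished vertex of the Tits index of $\cG_1/K_1$ through the isogeny to a distinguished vertex of $\cG_2/{(K_2)}_v$ for every $v$ and then applies the local--global principle for distinguished orbits. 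Nothing in your proposal substitutes for this ingredient, nor do you treat the complementary case $L_2 \not\subset K_2L_1$, which the paper handles by a separate Chebotarev argument producing tori of $\cG_1$ that split at a place where $\cG_2$ does not.

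The reduction to $K_1 = K_2$ is also not legitimate. If $K_1 \subsetneq K_2$ you cannot ``replace $K$ by the compositum $K_1K_2$ throughout'': the trace field of a Zariski-dense $(\cG, K, S)$-arithmetic subgroup is exactly $K$, so $\Gamma_1$ is not arithmetic over $K_1K_2 = K_2$, and every result you then invoke --- Theorem~\ref{T:ArG1}(b),(d), Remark~5.2, Theorem~\ref{T:equal-rank} --- requires both groups to be arithmetic over one common number field; moreover the Dirichlet-theorem and boundedness arguments are governed by $S_1 \subset V^{K_1}$ and $S_2 \subset V^{K_2}$, which do not transport to places of the compositum. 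This unequal-fields situation is not degenerate: it is the reason the paper formulates Theorem~\ref{T:equal-rank}$'$ for a pair $K_1 \subset K_2$ and only assumes $K_1 \subset K_2$, never $K_1 = K_2$, in the proof. By contrast, your intermediate reduction (eliminating places where $\cG_2$ is locally anisotropic) is sound but superfluous: at such a place $v_0$ one has $\mathrm{rk}_{K_{v_0}}\,\cG_1 > 0 = \mathrm{rk}_{K_{v_0}}\,\cG_2$ because $\cG_1$ is $K$-isotropic, and the proof of Theorem~\ref{T:ArG1}(b) applies verbatim to archimedean $v_0$, so the adaptation of part (d) with compactness of $\cG_2(K_{v_0})$, while workable, is not needed. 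The two substantive defects are the missing asymmetric rank-transfer result and the unjustified same-field reduction.
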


The proof relies on the following version of Theorem
\ref{T:equal-rank} which treats the case where the fields of
definitions of $\Gamma_1$ and $\Gamma_2$ are not necessarily the
same.

\vskip2mm

\noindent {\bf Theorem \ref{T:equal-rank}$'$.} {\it For $i = 1, 2,$
let $G_i$ be a connected absolutely simple algebraic group over a
number field $K_i,$ and let $L_i$ be the minimal Galois extension of
$K_i$ over which $G_i$ is of inner type. Assume that $K_1
\subset K_2,$ $L_2 \subset K_2L_1,$ $w_1 = w_2$ and
$\mathrm{rk}_{K_1}\,G_1
> 0.$ Furthermore, let $\mathcal{V}_1 \subset V^{K_1}$ be a finite
subset such that $G_2$ is quasi-split over ${K_2}_v$ for all $v \notin
\mathcal{V}_2,$ where $\mathcal{V}_2$ consists of all extensions of
places contained in $\mathcal{V}_1$ to $K_2,$ and let $T_1$ be a maximal
$K_1$-torus of $G_1$ such that

\vskip2mm

{\rm (1)} $\theta_{T_1}(\Ga({K_1}_{T_1}/K_1)) \supset W(G_1 , T_1);$

\vskip1mm

{\rm (2)} $\mathrm{rk}_{{K_1}_v}\,T_1 = \mathrm{rk}_{{K_1}_v}\,
G_1$ for all $v \in \mathcal{V}_1.$

\vskip2mm

\noindent If there exists a maximal torus $T_2$ of $G_2$ and a $K_2$-isogeny $T_2 \to T_1$,  then
$\mathrm{rk}_{K_2}\,G_2
> 0.$}

\vskip2mm

This result is also proved in the Appendix along with Theorem
\ref{T:equal-rank}.

\vskip3mm

\noindent {\it Proof of Theorem \ref{T:isotr}.} If $K_1 \not\subset
K_2$ then the fact that $(C_1)$ holds follows from Theorem
\ref{T:F1} (cf.\,the proof of Theorem \ref{T:ArG1}(a)). So, in the
rest of the argument we may assume that $K_1 \subset K_2.$

Next, suppose that $L_2 \not\subset K_2L_1.$ In this case, the
argument imitates the proof of Theorem \ref{T:ArG1}(c). More
precisely, we have $K_2L_1 \subsetneqq L_1L_2.$ So, if $\mathfrak{L}$ is the
normal closure of $L_1L_2$ over $K_1,$ then there exists $\sigma \in
\Ga({\mathfrak L}/K_1)$ that restricts trivially to $K_2L_1$ and nontrivially to
$L_1L_2.$ By Chebotarev's Density Theorem, we can find $v_0 \in
V^{K_1} \setminus S_1$ which is unramified in ${\mathfrak L}/K_1$ and for which
the Frobenius automorphism $\mathrm{Fr}(\tilde{v}_0 \vert v_0)$
equals $\sigma$ for an appropriate extension $\tilde{v}_0 \vert v_0$, 
and in addition the group $\cG_1$ is quasi-split over ${K_1}_{v_0}$.
Let $u_0$ be the restriction of $\tilde{v}_0$ to $K_2.$ By
construction, we have $L_1 \subset {K_1}_{v_0},$ which means that
$\cG_1$ is actually split over ${K_1}_{v_0};$ at the same time, $L_2
\not\subset {K_2}_{u_0},$ and therefore $\cG_2$ is not split over
${K_2}_{u_0}.$ Set $L = L_1L_2$ and $\mathcal{V}_1 = S_1 \cup
\{v_0\}.$ Fix $m \geqslant 1,$ and using Theorem \ref{T:Ex1} pick
maximal $K_1$-tori $T_1, \ldots , T_m$ of $\cG_1$ that are
independent over $L$ and satisfy the following two conditions for
each $j \leqslant m$:

\vskip2mm

\noindent $\bullet$ $\theta_{T_j}(\Ga(L_{T_j}/L)) = W(G_1 , T_j)$;

\vskip1mm

\noindent $\bullet$ $\mathrm{rk}_{{K_1}_v}\,T_j =
\mathrm{rk}_{{K_1}_v}\, \cG_1$ for all $v \in \mathcal{V}_1$.

\vskip2mm

\noindent As in the proof of Theorem \ref{T:ArG1}(b), it follows
from Dirichlet's Theorem that one can pick elements $\gamma_j \in
\Gamma_1 \cap T_j(K_1)$ for $j \leqslant m$ of infinite order. By
Lemma~\ref{L:P1}, the elements $\gamma_1, \ldots , \gamma_m$ are
multiplicatively independent, so to establish property $(C_1)$ in the case at
hand, it remains to show that these elements are not weakly
contained in $\Gamma_2.$ Assume the contrary. Then according to
Theorem \ref{T:10} (with $K = K_2$), there exists a maximal $K_2$-torus $T^{(2)}$ 
of $\cG_2$ and a $K_2$-isogeny $T^{(2)}\to T_j $ for some $j\leqslant m$. Clearly, $T_j$ is split over
${K_1}_{v_0},$ hence also over ${K_2}_{u_0}.$ We conclude that
$T^{(2)}$  is also split over ${K_2}_{u_0},$ which is impossible as
$\cG_2$ is not ${K_2}_{u_0}$-split. This verifies property $(C_1)$
in this case. (We note that so far we have not used the assumption that $\cG_1$ is $K_1$-isotropic and $\cG_2$ is $K_2$-anisotropic.)

\vskip2mm

It remains to consider the case where $K_1 \subset K_2$ and $L_2
\subset K_2L_1.$ Here the argument is very similar to the one given
above but uses a different choice of the set $\mathcal{V}_1$ and
relies on Theorem \ref{T:equal-rank}$'$. More precisely, pick a
finite subset $\mathcal{V}_1 \subset V^{K_1}$ containing $S_1$ so
that $\cG_2$ is quasi-split over ${K_2}_v$ for all $v \in
\mathcal{V}_2,$ where $\mathcal{V}_2$ consists of all extensions of
places in $\mathcal{V}_1$ to $K_2.$ Assume that $(C_1)$ does not
hold, i.e., there exists $m \geqslant 1$ such that any $m$
multiplicatively independent semi-simple elements of $\Gamma_1$ of infinite order are
necessarily weakly contained in $\Gamma_2.$ Fix such an $m,$ and
using the same $L$ as above, pick maximal $K_1$-tori $T_1, \ldots ,
T_m$ of $\cG_1$ that are independent over $L$ and satisfy the above
bulleted conditions for this new choice of $\mathcal{V}_1.$ Arguing
as in the previous paragraph, we see that again, there exists a maximal $K_2$-torus $T^{(2)}$ of $\cG_2$ 
and a $K_2$-isogeny $T^{(2)}\to T_j$ for some $j
\leqslant m.$ Then it follows from
Theorem \ref{T:equal-rank}$'$ that $\cG_2$ is $K_2$-isotropic, a
contradiction. \hfill $\Box$

\vskip3mm

It would be interesting to determine if the assumption that $w_1 =
w_2$ in Theorem \ref{T:isotr} can be omitted.

\vskip2mm

\noindent {\bf Question.} {\it Is it possible to construct
$K_1$-isotropic $\cG_1$ and $K_2$-anisotropic $\cG_2$, with $K_1 \subset
K_2$ so that every $K_1$-anisotropic torus of $\cG_1$ is
$K_2$-isomorphic to a $K_2$-torus of $\cG_2$?}

\vskip2mm

(Obviously, the affirmative answer to this question with $K_1 = \Q$
would lead to an example where every semi-simple element of infinite
order in $\Gamma_1$ would be weakly contained in $\Gamma_2$ and
therefore $(C_1)$ would not hold.)

\vskip3mm

\vskip5mm

\section{Groups of types $A_n,$ $D_n$ and $E_6$}\label{S:ADE}

It is known that the assertion of Theorem \ref{T:ArG2} may fail if
the common Killing-Cartan type of the groups $G_1$ and $G_2$ is one
of the following: $A_n,$ $D_{2n+1}$ $(n > 1)$ or $E_6$
(cf.\:Examples 6.5, 6.6, 6.7 and \S 9 in \cite{PR6}). Nevertheless, a
suitable analog of Theorem \ref{T:ArG2} with interesting geometric
consequences can still be given (cf.\:Theorem \ref{T:ADE6} below).
It is based on the following notion.

\vskip2mm

\noindent {\bf Definition.} Let $G_1$ and $G_2$ be connected absolutely almost
simple algebraic groups defined over a field $K.$ We say that $G_1$
and $G_2$ have {\it equivalent systems of maximal $K$-tori} if for
every maximal $K$-torus $T_1$ of $G_1$ there exists a
$\overline{K}$-isomorphism $\varphi \colon G_1 \to G_2$ such that
the restriction $\varphi \vert_{T_1}$ is defined over $K,$ and
conversely, for every maximal $K$-torus $T_2$ of $G_2$ there exists
a $\overline{K}$-isomorphism $\psi \colon G_2 \to G_1$ such that  the
restriction $\psi \vert_{T_2}$ is defined over $K$.

\vskip2mm

We note that given a $\overline{K}$-isomorphism $\varphi \colon G_1
\to G_2$ as in the definition, the torus $T_2 = \varphi(T_1)$ is
defined over $K$ and the corresponding map $X(T_2) \to X(T_1)$
induces a bijection $\Phi(G_2 , T_2) \to \Phi(G_1 , T_1).$ This
observation implies that if $G_i$ is a connected  absolutely almost simple real
algebraic group, $\Gamma_i \subset G_i(\R)$ is a torsion-free
$(\cG_i , K)$-arithmetic subgroup and $\fX_{\Gamma_i}$ is the
associated locally symmetric space, where $i = 1, 2,$ then the fact
that $\cG_1$ and $\cG_2$ have equivalent systems of maximal $K$-tori
entails that $\fX_{\Gamma_1}$ and $\fX_{\Gamma_2}$ are
length-commensurable (see Proposition 9.14 of \cite{PR6}). For
technical reasons, in this section it is more convenient for us to
deal with simply connected groups rather than with adjoint ones
which are more natural from the geometric standpoint. So, we observe
in this regard that if simply connected $K$-groups $G_1$ and $G_2$
have equivalent systems of maximal $K$-tori then so do the
corresponding adjoint groups $\overline{G}_1$ and $\overline{G}_2$
(and vice versa).

\vskip2mm

We will now describe fairly  general conditions  guaranteeing that
two forms over a number field $K,$ of an absolutely almost simple
simply connected group of one of  types $A_n$, $D_{2n+1}$ $(n> 1)$,
or $E_6$, have equivalent systems of maximal $K$-tori.

\begin{thm}\label{T:equiv-tori}
Let $G_1$ and $G_2$ be two connected absolutely almost simple simply connected
algebraic groups of one of the following types: $A_n,$  $D_{2n+1}$
$(n > 1)$ or $E_6,$ defined over a number field $K,$ and let $L_i$
be the minimal Galois extension of $K$ over which $G_i$ is of
inner type. Assume that
\begin{equation}\label{E:rk}
\mathrm{rk}_{K_v}\,G_1 = \mathrm{rk}_{K_v}\, G_2 \ \ \text{for all}
\ \ v \in V^K,
\end{equation}
hence\footnotemark \:$L_1 = L_2 =: L$. Moreover, if $G_1$ and $G_2$
are of type $D_{2n+1}$ we assume that  for each real place $v$ of
$K$, we can find maximal $K_v$-tori $T_i^v$ of $G_i$ for $i = 1, 2,$
such that $\mathrm{rk}_{K_v}\,T_i^v = \mathrm{rk}_{K_v}\, G_i$ and
there exists a $K_v$-isomorphism $T_1^v \to T_2^v$ that extends to a
$\overline{K_v}$-isomorphism $G_1 \to G_2.$ If

\vskip2mm

\noindent {\rm (1)} \parbox[t]{12.5cm}{one can pick  maximal $K$-tori
$T_i^0$ of $G_i$ for $i = 1, 2$ with a $K$-isomorphism $T_1^0 \to
T_2^0$ that extends to a $\overline{K}$-isomorphism $G_1 \to G_2,$
and}

\vskip1mm

\noindent {\rm (2)} \parbox[t]{12.5cm}{there exists a place $v_0$ of $K$ such
that one of the groups $G_i$ is $K_{v_0}$-anisotropic (and then both
are such due to (\ref{E:rk})),}

\vskip2mm

\noindent then $G_1$ and $G_2$ have equivalent systems of maximal
$K$-tori. \footnotetext{As we have seen in the proof of Theorem
\ref{T:ArG1}, the former condition automatically implies the
latter.}
\end{thm}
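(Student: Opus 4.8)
The plan is to establish the two inclusions between the systems of maximal $K$-tori of $G_1$ and of $G_2$; by symmetry it suffices to show that every maximal $K$-torus $T_1$ of $G_1$ can be matched into $G_2$, i.e.\ that there is a maximal $K$-torus $T_2$ of $G_2$ and a $K$-isomorphism $T_1 \to T_2$ extending to a $\overline{K}$-isomorphism $G_1 \to G_2$ (which is equivalent to the formulation in the Definition). I would begin by recasting this in the language of \cite{PR6}, \S 9: identifying both root systems with one fixed $\Phi$, a maximal $K$-torus $T_1$ of $G_1$ carries a homomorphism $\theta_{T_1}\colon \Ga(K_{T_1}/K) \to \mathrm{Aut}(\Phi)$, and, since (\ref{E:rk}) forces $L_1 = L_2 = L$ (as in the proof of Theorem \ref{T:ArG1}(c)), the composite of $\theta_{T_1}$ with $\mathrm{Aut}(\Phi) \to \mathrm{Aut}(\Phi)/W(\Phi)$ coincides with the corresponding composite for $G_2$. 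Matching $T_1$ into $G_2$ then amounts to realizing the $\mathrm{Aut}(\Phi)$-conjugacy class of $\theta_{T_1}$ by a maximal $K$-torus of $G_2$.

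First I would verify \emph{local} realizability: that the class of $\theta_{T_1}$ is realized by a maximal $K_v$-torus of $(G_2)_{K_v}$ for every $v \in V^K$. Outside the finite set of places where one of the two groups is not quasi-split, both are quasi-split, hence $K_v$-isomorphic, and $(T_1)_{K_v}$ transfers at once; complex places are trivial. At the remaining nonarchimedean places I would use $L_1 = L_2$ together with (\ref{E:rk}) and the explicit classification of inner forms of groups of type $A_n$, $D_{2n+1}$, $E_6$ over a $p$-adic field, and of their maximal tori, to conclude that $(G_1)_{K_v}$ and $(G_2)_{K_v}$ have the same maximal $K_v$-tori. At the real places, for types $A_n$ and $E_6$ equality of $K_v$-ranks already pins down the real form among inner forms of the common quasi-split group, so again the two groups share all maximal $K_v$-tori; for type $D_{2n+1}$, where this can fail, I would feed in the extra hypothesis of the theorem (the existence of the full-rank tori $T_i^v$ with a compatible isomorphism $T_1^v \to T_2^v$) together with a transfer argument inside $(G_2)_{K_v}$ to reach the same conclusion.

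Next I would carry out the \emph{global} step using the local-to-global machinery of \cite{PR6}, \S 9, and \cite{PR7}. Given local realizability everywhere, the obstruction to realizing $\theta_{T_1}$ by a maximal $K$-torus of $G_2$ is a class in a Shafarevich--Tate group built from the centre of the simply connected groups in question, and the point is that this class vanishes. Here hypothesis (1) plays the role of a base point: it says that the systems of realizable types of $G_1$ and of $G_2$ already overlap, so the obstruction is trivial for at least one type $\theta_0$; hypothesis (2), the existence of a place $v_0$ at which (by (\ref{E:rk})) \emph{both} groups are anisotropic, is then used to control the difference --- at $v_0$ every maximal $K_{v_0}$-torus of either group is anisotropic, which trivializes the relevant local component there, and reciprocity forces the global obstruction to vanish for an arbitrary $\theta_{T_1}$. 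Once the type is known to be globally realizable in $G_2$, I would produce an actual maximal $K$-torus $T_2$ of $G_2$ of the prescribed type, conjugate over each $K_v$ to the chosen local torus, by an argument of the kind already used to prove Theorem \ref{T:Ex1}. Exchanging the roles of $G_1$ and $G_2$ then yields the other inclusion.

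The step I expect to be the main obstacle is the global one: pinning down the precise Shafarevich--Tate obstruction to transferring a torus type from the common quasi-split form to $G_2$, and checking that hypotheses (1) and (2) really do annihilate it --- in particular making the reciprocity argument involving the anisotropic place $v_0$ rigorous across all three families $A_n$, $D_{2n+1}$, $E_6$. A subsidiary point requiring care is the nonarchimedean local analysis for type $A_n$: there equality of $K_v$-ranks does \emph{not} imply that $(G_1)_{K_v}$ and $(G_2)_{K_v}$ are $K_v$-isomorphic, so one must argue directly that the two inner forms nonetheless carry exactly the same maximal $K_v$-tori, and it is precisely this phenomenon --- a nontrivial centre that still leaves the system of tori unchanged --- that distinguishes the types $A_n$, $D_{2n+1}$, $E_6$ treated here from those covered by Theorem \ref{T:ArG2}.
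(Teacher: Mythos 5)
Your proposal is correct in outline and follows essentially the same route as the paper: a place-by-place local analysis (quasi-split places trivially, the \'etale-algebra/degree-divisibility argument for inner forms of type $A_n$ over nonarchimedean fields, rank and Tits-index comparisons at the remaining places, with the extra hypothesis invoked only for type $D_{2n+1}$ over $\R$), followed by the local-to-global machinery of \cite{PR6}, \S 9, in which hypothesis (1) supplies the reference $\overline{K}$-isomorphism $\varphi_0$ making the local embeddings coherent and hypothesis (2) kills the global obstruction. The only imprecision is the identification of that obstruction: it is the second Tate--Shafarevich group of the torus $T_1$ itself, which vanishes because $T_1$ is $K_{v_0}$-anisotropic (\cite{PlR}, Proposition 6.12), after one first adjusts each local isomorphism by the automorphism acting as inversion on the torus --- the unique nontrivial outer class for these types --- so that it differs from $\varphi_0$ by an inner automorphism; Theorem 9.6 of \cite{PR6} then yields the coherent $K$-defined embedding, exactly as in the paper.
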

\begin{proof}
We begin by establishing first the corresponding local assertion.
\begin{lemma}\label{L:local1}
Let $\cG_1$ and $\cG_2$ be two connected absolutely almost simple simply
connected algebraic groups of one of the following types: $A_{\ell}$
$(\ell \geqslant 1),$ $D_{\ell}$ $(\ell \geqslant 5)$ or $E_6,$ over
a nondiscrete locally compact field $\cK$ of characteristic zero,
and let $\cL_i$ be the minimal Galois extension of $\cK$ over which
$\cG_i$ is of inner type. Assume that
$$
\cL_1 = \cL_2 =: \cL \ \ \text{and} \ \ \mathrm{rk}_{\cK}\:\cG_1 =
\mathrm{rk}_{\cK}\: \cG_2,
$$
and moreover, in case $\cG_1$ and $\cG_2$ are of type $D_{\ell}$ and
$\cK = \R,$ there exist maximal $\cK$-tori $\cT_i$ of $\cG_i$ such
that $\mathrm{rk}_{\cK}\: \cT_i = \mathrm{rk}_{\cK}\: \cG_i$ for $i
= 1, 2,$ with a $\cK$-isomorphism $\cT_1 \to \cT_2$ that extends to
a $\overline{\cK}$-isomorphism $\cG_1 \to \cG_2.$ Then

\vskip2mm

\noindent \ {\rm (i)} \parbox[t]{12.5cm}{except in the case where
$\cG_1$ and $\cG_2$ are inner $K$-forms of a split group of type $A_{\ell}$ with $\ell > 1,$
we have $\cG_1 \simeq \cG_2$ over $\cK;$}

\vskip2mm

\noindent {\rm (ii)} \parbox[t]{12.5cm}{in all cases, $\cG_1$ and $\cG_2$
have equivalent systems of maximal $\cK$-tori.}
\end{lemma}
\begin{proof}
(i): First, let $\cG_1$ and $\cG_2$ be {\it outer} $\cK$-forms of a split group of type
$A_{\ell}$ associated with a quadratic extension $\cL$ of  $\cK.$ Then
$\cG_i = \mathrm{SU}(\cL , h_i)$ where $h_i$ is a nondegenerate
Hermitian form on $\cL^n,$ $n = \ell + 1,$ with respect to the
nontrivial automorphism of $\cL/\cK.$ Since $\mathrm{rk}_{\cK}\:
\cG_i$ coincides with the Witt index of the Hermitian form $h_i$,
the forms $h_1$ and $h_2$ have equal Witt index. On the other hand,
it is well-known, and easy to see, that the similarity class of an
{\it anisotropic} Hermitian form over $\cL$ is determined by its
dimension (which for nonarchimedean $v$ is necessarily $\leqslant
2$). So, the fact that $h_1$ and $h_2$ have equal Witt index implies
that $h_1$ and $h_2$ are similar, hence $\cG_1 \simeq \cG_2,$ as
required.

\vskip1mm

Now, suppose $\cG_1$ and $\cG_2$ are of type $D_{\ell}$ with $\ell
\geqslant 5.$ If $\cK = \C$ then there is nothing to prove;
otherwise there is a unique quaternion central division algebra
$\cD$ over $\cK.$ For each $i \in \{1 , 2\}$, we have two
possibilities: either $\cG_i = \mathrm{Spin}_n(q_i)$ where $q_i$ is
a nondegenerate quadratic form over $\cK$ of dimension $n = 2\ell$
(orthogonal type), or $\cG_i$ is the universal cover of
$\mathrm{SU}(\cD , h_i)$ where $h_i$ is a nondegenerate
skew-Hermitian form on $\cD^{\ell}$ with respect to the canonical
involution on $\cD$ (quaternionic type). We will now show that in
our situation, $\cG_1$ and $\cG_2$ are both of the same, orthogonal
or quaternionic, type. First, we treat the case where $\cK$ is 
nonarchimedean. Assume that $\cG_1$ is of orthogonal, and $\cG_2$ is
of quaternionic, type. Then $\mathrm{rk}_{\cK}\: \cG_1 \geqslant
(2\ell - 4)/2 = \ell - 2,$ while $\mathrm{rk}_{\cK}\: \cG_2
\leqslant \ell/2.$ So, $\mathrm{rk}_{\cK}\: \cG_1 =
\mathrm{rk}_{\cK}\: \cG_2$ is impossible as $\ell \geqslant 5,$ a
contradiction. Over $\cK = \R,$ however, one can have $\cG_1$ of
orthogonal type and $\cG_2$ of quaternionic type with the same
$\cK$-rank, so to prove our assertion in this case we need to use
the hypothesis  that there exist maximal $\cK$-tori $\cT_i$ of
$\cG_i$ such that $\mathrm{rk}_{\cK}\, \cT_i = \mathrm{rk}_{\cK}\,
\cG_i$,  with a $\cK$-isomorphism $\cT_1 \to \cT_2$ that extends to
a $\overline{\cK}$-isomorphism $\cG_1 \to \cG_2.$ Such an
isomorphism induces an isomorphism between the Tits indices of
$\cG_1/\cK$ and $\cG_2/\cK$ (cf.\:the discussion in \S 7.1 of
\cite{PR6}). However, if $\cG_1$ is of orthogonal type, and $\cG_2$
of quaternionic, the corresponding Tits indices are not isomorphic,
and our assertion follows.

Now, let $\cG_1$ and $\cG_2$ be of quaternionic type. It is known
that two nondegenerate skew-Hermitian forms over $\cD$ are
equivalent if they have the same dimension and in addition the same
discriminant in the nonarchimedean case (cf.\:\cite{Sch}, Ch.\,10,
Theorem 3.6 in the nonarchimedean case, and Theorem 3.7 in the
archimedean case). If $h_1$ and $h_2$ are the skew-Hermitian forms
defining $\cG_1$ and $\cG_2$ respectively, then the condition that $h_1$ and $h_2$
have the same discriminant is equivalent to the fact that $\cL_1 =
\cL_2,$ and therefore holds in our situation. Thus, $h_1$ and $h_2$
are equivalent, hence $\cG_1$ and $\cG_2$ are $\cK$-isomorphic.

Next, let $\cG_1$ and $\cG_2$ be of orthogonal type, $\cG_i =
\mathrm{Spin}(q_i).$ To show that $\cG_1 \simeq \cG_2,$ it is enough
to show that $q_1$ and $q_2$ are similar. The condition
$\mathrm{rk}_{\cK}\: \cG_1 = \mathrm{rk}_{\cK}\: \cG_2$ yields that
$q_1$ and $q_2$ have the same Witt index, so we just need to show
that the maximal anisotropic subforms $q_1^a$ and $q_2^a$ are
similar. If $\cK = \R$, then any two anisotropic forms of the same
dimension are similar, and there is nothing to prove. Now, let $\cK$
be nonarchimedean. Our claim is obvious if $q_1^a = q_2^a = 0;$ in
the two remaining cases the common dimension of $q_1^a$ and $q_2^a$
can only be $2$ or $4.$ To treat binary forms, we observe that $q_1$
and $q_2,$ hence also $q_1^a$ and $q_2^a,$ have the same
discriminant, and two binary forms of the same
discriminant are similar. The claim for quaternary forms follows
from the fact that there exists a single equivalence class of such
anisotropic forms (this equivalence class is represented by the
reduced-norm form of $\cD$).

\vskip1mm

Finally, we consider groups of type $E_6.$ If $\cK = \R$ then by
inspecting the tables in \cite{Ti} we find that there are two
possible indices for the inner forms with the corresponding groups
having $\R$-ranks $2$ and $6,$ and there are three possible indices
for outer forms for which the $\R$-ranks are $0,$ $2$ and $4.$ Thus,
since $G_1$ and $G_2$ are simultaneously either inner or outer forms
and have the same $\R$-rank,  they are $\R$-isomorphic. To establish
the same conclusion in the nonarchimedean case, we recall that then
an outer form of type $E_6$ is always quasi-split (cf.\:\cite{PlR},
Proposition 6.15), so for outer forms our assumption that $\cL_1 =
\cL_2$ implies that $G_1 \simeq G_2.$ Since there exists only one
nonsplit inner form of type $E_6$ (this follows, for example, from
the proof of Lemma 9.9(ii) in \cite{PR6}), our assertion holds in
this case as well.

\vskip2mm

(ii): It remains to be shown that if $\cG_1$ and $\cG_2$ are inner forms
of type $A_{\ell}$ over $\cK$ such that $\mathrm{rk}_{\cK}\: \cG_1 =
\mathrm{rk}_{\cK}\: \cG_2$, then $\cG_1$ and $\cG_2$ have equivalent
systems of maximal $\cK$-tori. We have $\cG_i = \mathrm{SL}_{d_i ,
\cD_i}$ where $\cD_i$ is a central division algebra over $\cK$ of
degree $n_i$ and
$$
\mathrm{rk}_{\cK}\, \cG_i = d_i - 1 \ \ \ \text{and} \ \ \ d_im_i =
\ell + 1 =: n.
$$
Thus, in our situation $d_1=d_2$ and $n_1 = n_2.$ Furthermore, it is
well-known (cf.\,\cite{PR7}, Proposition 2.6) that a commutative \'etale
$n$-dimensional $\cK$-algebra $\cE = \prod_{j = 1}^s \cE^{(j)},$
where $\cE^{(j)}/\cK$ is a finite (separable) field extension,
embeds in $\cA_i := M_{d_i}(\cD_i)$ if and only if each degree
$[\cE^{(j)} : \cK]$ is divisible by $n_i.$ So, we conclude that
$\cE$ embeds in $\cA_1$ if and only if it embeds in $\cA_2.$ On the
other hand, any maximal $\cK$-torus $\cT_1$ of $\cG_1$ coincides
with the torus $\mathrm{R}_{\cE_1/\cK}^{(1)}(\mathrm{GL}_1)$
associated with the group of norm one elements in some
$n$-dimensional commutative \'etale subalgebra $\cE_1$ of $\cA_1.$ As we noted
above, $\cE_1$ embeds in $\cA_2,$ and then using the Skolem-Noether
Theorem (see Footnote 1 on p.\,592  in \cite{PR7}) one can construct an isomorphism
$\cA_1 \otimes_{\cK} \overline{\cK} \simeq \cA_2 \otimes_{\cK}
\overline{\cK}$ that maps $\cE_1$ to a subalgebra $\cE_2 \subset
\cA_2.$ This isomorphism gives rise to a $\overline{K}$-isomorphism
$\cG_1 \simeq \cG_2$ that induces a $\cK$-isomorphism between
$\cT_1$ and $\cT_2 := \mathrm{R}_{\cE_2/\cK}(\mathrm{GL}_1).$ By
symmetry, $\cG_1$ and $\cG_2$ have equivalent systems of maximal
$\cK$-tori.
\end{proof}

\vskip2mm

To complete the proof of Theorem \ref{T:equiv-tori}, we fix a
$\overline{K}$-isomorphism $\varphi_0 \colon G_1 \to G_2$ such that
the restriction $\varphi_0 \vert_{T_1^0}$ is a $K$-isomorphism
between $T_1^0$ and $T_2^0.$ Let $T_1$ be an arbitrary maximal
$K$-torus of $G_1.$ Then by Lemma \ref{L:local1}, for any $v \in
V^K,$ there exists a $\overline{K_v}$-isomorphism $\varphi_v: G_1\to
G_2$ whose  restriction to $T_1$ is defined over $K_v$.  Then
$\varphi_v = \alpha \cdot \varphi_0$ for some $\alpha \in
\mathrm{Aut}\: G_2.$ There exists an automorphism of $G_2$ that acts
as $t \mapsto t^{-1}$ on $T_2 := \varphi_v(T_1)$. Moreover, for groups
of the types listed in the theorem, this automorphism represents the
only nontrivial element of $\mathrm{Aut}\: G_2 / \mathrm{Inn}\:
G_2$. So, if necessary, we can replace $\varphi_v$ by the composite 
of $\varphi_v$ with this automorphism to ensure that $\alpha$ is inner
(and the restriction of $\varphi_v$ to $T_1$ is still defined over
$K$, cf.\:the proof of Lemma 9.7 in \cite{PR6}). This shows that
$T_1$ admits a {\it coherent} (relative to $\varphi_0$)
$K_v$-embedding in $G_2$ (in the terminology introduced in
\cite{PR6}, \S 9), for every $v \in V^K.$ Since $T_1$ is
$K_{v_0}$-anisotropic, {\brus SH}$^2(T_1)$ is trivial
(cf.\:\cite{PlR}, Proposition 6.12). So, by Theorem 9.6 of
\cite{PR6}, $T_1$ admits a coherent $K$-defined embedding in $G_2$
which in particular is a $K$-embedding $T_1 \to G_2$ which extends
to a $\overline{K}$-isomorphism $G_1 \to G_2.$ By symmetry, $G_1$
and $G_2$ have equivalent systems of maximal $K$-tori.
\end{proof}

\vskip2mm

The following proposition complements Theorem \ref{T:equiv-tori} for
groups of type $A_n$ in that it does not assume the existence of a
place $v_0 \in V^K$ where the groups are anisotropic.
\begin{prop}\label{P:An}
Let $G_1$ and $G_2$ be two connected absolutely almost 
simple simply connected
algebraic groups of type $A_n$ over a number field $K,$ and let
$L_i$ be the minimal Galois extension of $K$ over which $G_i$
is of inner type. Assume that
\begin{equation}\label{E:rank}
\mathrm{rk}_{K_v}\,G_1 = \mathrm{rk}_{K_v}\,G_2 \ \ \text{for all}
\ \ v \in V^K,
\end{equation}
hence $L_1 = L_2 =: L.$ In each of the following situations:

\vskip2mm

\noindent {\rm (1)} \parbox[t]{13.3cm}{$G_1$ and $G_2$ are inner
forms,}

\vskip2mm

\noindent {\rm (2)} \parbox[t]{13.3cm}{$G_1$ and $G_2$ are outer
forms, and one of them is represented by $\mathrm{SU}(D , \tau)$, 
where $D$ is a central \emph{division} algebra over $L$ with an
involution $\tau$ of the second kind that restricts to the
nontrivial automorphism $\sigma$ of $L/K$ (then both groups are of
this form),}

\vskip2mm

\noindent the groups $G_1$ and $G_2$ have equivalent systems of
maximal $K$-tori.
\end{prop}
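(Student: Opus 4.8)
The plan is to bypass the cohomological input of Theorem~\ref{T:equiv-tori} (where the anisotropic place $v_0$ was used only to make a Tate--Shafarevich group vanish), and instead argue directly with the descriptions of groups of type $A_n$ and of their maximal tori in terms of central simple algebras, together with the Skolem--Noether theorem --- exactly in the style of the local computation in Lemma~\ref{L:local1}(ii).

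First I would use~(\ref{E:rank}) to pin down the underlying division algebras. In situation~(1) write $G_i=\mathrm{SL}_{1,A_i}$ with $A_i=M_{m_i}(D_i)$, $D_i$ a central division $K$-algebra of degree $d_i$, $m_id_i=n+1$. Since $\mathrm{rk}_{K_v}\,\mathrm{SL}_{1,A_i}=(n+1)/\mathrm{ind}(A_i\otimes_K K_v)-1$, condition~(\ref{E:rank}) says that $D_1$ and $D_2$ have equal local index at every place of $K$; as the index of a division algebra over a number field is the least common multiple of its local indices, this forces $d_1=d_2=:d$, hence $m_1=m_2=:m$. In situation~(2), $G_i=\mathrm{SU}(D_i,\tau_i)$ with $D_i$ central simple over the quadratic extension $L/K$ and $\tau_i$ of the second kind over $K$: at the places $v$ of $K$ split in $L$ one argues as in situation~(1) (there $G_i\otimes_K K_v\cong\mathrm{SL}_{1,\,D_i\otimes_L L_w}$), while at the places $v$ not split in $L$ the existence of an involution of the second kind forces $D_i\otimes_L L_w$ to split (the corestriction $\mathrm{Br}(L_w)\to\mathrm{Br}(K_v)$ preserves invariants, hence is injective). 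Combining these, $D_1$ and $D_2$ have equal local index at every place of $L$, and in particular $\mathrm{ind}(D_1)=\mathrm{ind}(D_2)=n+1$, so both $D_i$ are division of degree $n+1$ over $L$ (which is the parenthetical assertion in the statement).

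Next I would transport maximal tori. In situation~(1) an arbitrary maximal $K$-torus $T_1$ of $G_1$ is $\mathrm{R}^{(1)}_{E_1/K}(\mathrm{GL}_1)$ for a maximal \'etale $K$-subalgebra $E_1\subset A_1$ of dimension $n+1$; by the embedding criterion (\cite{PR7}, Proposition~2.6) each field component of $E_1$ has degree divisible by $d$, so $E_1$ also embeds in $A_2$, say via $\iota_2$. Fixing $\overline K$-algebra isomorphisms $A_i\otimes_K\overline K\simeq M_{n+1}(\overline K)$ and applying Skolem--Noether to the two resulting embeddings of the split \'etale algebra $E_1\otimes_K\overline K$ into $M_{n+1}(\overline K)$, one obtains a $\overline K$-algebra isomorphism $A_1\otimes_K\overline K\to A_2\otimes_K\overline K$ that carries $E_1\subset A_1$ onto $\iota_2(E_1)\subset A_2$ and restricts on it to the $K$-isomorphism $\iota_2\circ\iota_1^{-1}$; the induced $\overline K$-isomorphism $G_1\to G_2$ then restricts to a $K$-isomorphism of $T_1$ onto the maximal $K$-torus $\mathrm{R}^{(1)}_{\iota_2(E_1)/K}(\mathrm{GL}_1)$ of $G_2$, and by symmetry $G_1$ and $G_2$ have equivalent systems of maximal $K$-tori. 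In situation~(2) the same scheme applies with $(D_i,\tau_i)$ in place of $A_i$: maximal $K$-tori correspond to $\tau_i$-stable maximal subfields $E\subset D_i$, equivalently to \'etale $K$-algebras $F=E^{\tau_i}$ of degree $n+1$ with $E=F\otimes_K L$, the torus being the norm-one subtorus of $\mathrm{R}_{E/K}(\mathrm{GL}_1)$ inside $\mathrm{SU}(D_i,\tau_i)$; since $E_1$ splits $D_1$ it splits $D_2$ as well (equal local indices), so $E_1$ embeds in $D_2$ as a maximal subfield, and one uses the $\tau$-equivariant form of Skolem--Noether (as in the proof of Lemma~\ref{L:local1}(ii)) to build the required $\overline K$-isomorphism respecting both the algebra structures and the involutions.

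The hard part will be the last step in the outer case: one must arrange the embedding of $E_1$ into $D_2$ so that $\tau_2$ restricts on its image to the conjugate of $\tau_1|_{E_1}$ --- i.e.\ that $(F,E_1)$ admits a genuinely $\tau$-equivariant embedding into $(D_2,\tau_2)$, not merely an $L$-algebra embedding. This compatibility is controlled by the finer local invariants of the pairs $(D_i,\tau_i)$ at the (possibly ramified) places of $L$ and by signature conditions at the real places of $K$; the point is that~(\ref{E:rank}) forces all of these to coincide for $i=1,2$ (at the archimedean places this is visible already from the equality of local ranks, which makes $G_1\otimes_K K_v\simeq G_2\otimes_K K_v$), and the needed equivariant embedding theorem in the case where $D$ is a division algebra --- the case of \emph{capacity one} --- is precisely what \cite{PR7} supplies.
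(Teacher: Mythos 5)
Your overall route is the same as the paper's: use (\ref{E:rank}) to equate local indices, describe maximal $K$-tori through \'etale (sub)algebras (with involution in the outer case), transfer embeddings from $A_1$, resp.\ $(D_1,\tau_1)$, to $A_2$, resp.\ $(D_2,\tau_2)$, and upgrade an embedding to a $\overline{K}$-isomorphism $G_1\to G_2$ restricting to a $K$-isomorphism of tori via Skolem--Noether. However, in case (1) the step ``each field component of $E_1$ has degree divisible by $d$, so $E_1$ also embeds in $A_2$'' is not valid: the criterion of \cite{PR7}, Proposition 2.6 that you quote is a statement over \emph{local} fields, and over the number field $K$ divisibility of the global degrees by the global index $d$ is necessary but not sufficient. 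For instance, if $A_2 = M_2(D_2)$ with $D_2$ a quaternion division algebra over $\Q$, a quartic field $E$ in which a place ramified in $D_2$ splits completely does not embed in $A_2$, although $2\mid 4$. The correct statement --- and the one the paper uses --- is the local--global criterion (\cite{PR7}, Propositions 2.6 \emph{and} 2.7): $E=\prod_j E^{(j)}$ embeds in $A_i$ if and only if for every $v\in V^K$ and every $w\mid v$ the local degree $[E^{(j)}_w:K_v]$ is divisible by the local index $m_i^{(v)}$. Since you have already deduced $m_1^{(v)}=m_2^{(v)}$ for all $v$ from (\ref{E:rank}), the argument is immediately repaired, but as written the implication you rely on would fail.

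In case (2) you correctly identify the crux --- an involution-compatible embedding of $(E_1,\tau_1|_{E_1})$ into $(D_2,\tau_2)$, to be obtained from the local--global theorem of \cite{PR7} (Theorem 4.1, applicable because $D_1$ being division forces $E_1/L$ to be a field) --- but you stop at asserting that (\ref{E:rank}) ``forces all the local invariants to coincide,'' and this is exactly the part that has to be proved. The paper verifies the local conditions in two distinct ways: at places $v$ of $K$ split in $L$ the local embedding condition is again a degree-divisibility condition relative to the local index (Prop.\ A.3 of \cite{PR1}), so equality of local indices transfers it from $(D_1,\tau_1)$ to $(D_2,\tau_2)$; at places $v$ not split in $L$ one has $(D_i\otimes_K K_v,\tau_i\otimes\mathrm{id})\simeq (M_m(L_w),\theta_i)$ with $\theta_i$ adjoint to a Hermitian form $h_i$, and (\ref{E:rank}) gives equal Witt indices, whence $h_1$ and $h_2$ are similar by the classification of anisotropic Hermitian forms over local fields (as in Lemma \ref{L:local1}(i)), so the local pairs are isomorphic and local embeddability transfers. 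Your appeal to ``finer local invariants'' and ``signature conditions at the real places'' gestures at this but omits the nonarchimedean nonsplit places entirely and supplies no argument at the split ones; equality of local ranks of the groups alone does not yield the similarity statement you need there. With these two verifications written out (and noting, as you do, that $D_2$ is also division, which is what allows the symmetric inclusion), your outline becomes the paper's proof.
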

\begin{proof}
(1): We have $G_i = \mathrm{SL}_{1 , A_i}$ where $A_i$ is a central
simple algebra over $K$ of dimension $(n+1)^2,$ and as in the proof
of Lemma \ref{L:local1}, it is enough to show that a commutative  \'etale
$(n+1)$-dimensional $K$-algebra $E$ embeds in $A_1$ if and only if
it embeds in $A_2.$ For $v \in V^K,$ we can write
$$
A_i \otimes_K K_v = M_{d_i^{(v)}}(\Delta_i^{(v)})
$$
where $\Delta_i^{(v)}$ is a central division algebra over $K_v,$ of
degree $m_i^{(v)}.$ As in the proof of Lemma \ref{L:local1}, we
conclude that (\ref{E:rank}) implies $m_1^{(v)} = m_2^{(v)}.$ On the
other hand, it is well-known (cf.\,\cite{PR7}, Propositions 2.6 and
2.7) that an $(n+1)$-dimensional commutative  \'etale $K$-algebra $E = \prod_{j =
1}^s E^{(j)},$ where $E^{(j)}/K$ is a finite (separable) field
extension, embeds in $A_i$ if and only if for each $j \leqslant s$
and all $v \in V^K,$ the local degree $[E^{(j)}_w : K_v]$ is
divisible by $m_i^{(v)}$ for all extensions $w \vert v,$ and the
required fact follows.

\vskip2mm

(2): We have $G_i = \mathrm{SU}(D_i , \tau_i)$, where $D_i$ is a
central simple algebra of degree $m = n + 1$ over $L$ with an
involution $\tau_i$ such that $\tau_i \vert L = \sigma.$ Assume that
$D_1$ is a division algebra. Then it follows from the
Albert-Hasse-Brauer-Noether Theorem that $m = \mathrm{lcm}_{w \in
V^L}(m_1^{(w)})$, where for $w \in V^L$, $D_i \otimes_L L_w =
M_{d_i^{(w)}}(\Delta_i^{(w)})$ with $\Delta_i^{(w)}$ a central
division algebra over $L_w$ of degree $m_i^{(w)}.$ For $j = 1, 2,$
set
$$
V_j^L = \{ w \in V^L \: \vert \: [L_w : K_v] = j \ \ \text{where} \
\ w \vert v \}.
$$
It is well-known that $m_i^{(w)} = 1$ for $w \in V_2^L,$ so
$$
m = \mathrm{lcm}_{w \in V_1^L}(m_1^{(w)}).
$$
On the other hand, for $w \in V_1^L$ we have $G_i \simeq
\mathrm{SL}_{d_i^{(w)} , \Delta_i^{(w)}}$ over $K_v = L_w,$ hence
$\mathrm{rk}_{K_v}\, G_i = d_i^{(w)} - 1.$ Thus, (\ref{E:rank})
implies that $m_1^{(w)} = m_2^{(w)}$ for all $w \in V_1^L,$ and
therefore
$$
m = \mathrm{lcm}_{w \in V_1^L}(m_2^{(w)}).
$$
It follows that $D_2$ is a division algebra, as required.

Next, since any maximal $K$-torus of $G_i$ is of the form
$\mathrm{R}_{E/K}(\mathrm{GL}_1) \cap G_i$ for some $m$-dimensional commutative 
\'etale $L$-algebra invariant under $\tau_i$ (cf.\,\cite{PR7},
Proposition 2.3), it is enough to show that  for an $m$-dimensional
commutative \'etale $L$-algebra $E$ with an involutive automorphism $\tau$ such
that $\tau \vert L = \sigma,$ the existence of an embedding $\iota_1
\colon (E , \tau) \hookrightarrow (D_1 , \tau_1)$ as $L$-algebras
with involutions is equivalent to the existence of an embedding
$\iota_2 \colon (E , \tau) \hookrightarrow (D_2 , \tau_2).$ Since $D_1$ is a division algebra, the existence of
$\iota_1$ implies that $E/L$ is a field extension, and then by
Theorem 4.1 of \cite{PR7}, the existence of $\iota_2$ is equivalent
to the existence of an $(L \otimes_K K_v)$-embedding
$$
\iota_2^{(v)} \colon (E \otimes_K K_v , \tau \otimes
\mathrm{id}_{K_v}) \hookrightarrow (D_2 \otimes_K K_v , \tau_2
\otimes \mathrm{id}_{K_v})
$$
for all $v \in V^K.$ If $v \in V^K$ has two extensions $w' , w'' \in
V_1^L$, then $m_i^{(w')} = m_i^{(w'')} =: m_i^{(v)}$ and the
necessary and sufficient condition for the existence of
$\iota_i^{(v)}$ is that for any extension $u$ of $v$ to $E$, 
the local degree $[E_u : K_v]$ is divisible by $m_i^{(v)}$
(cf. Proposition A.3 in \cite{PR1}). Therefore, since $m_1^{(v)} = m_2^{(v)},$
the existence of $\iota_1^{(v)}$ implies that of $\iota_2^{(v)}.$ If
$v$ has only one extension $w$ to $L$, then $w \in V_2^L$ and $$(D_i \otimes_K K_v ,
\tau_i  \otimes \mathrm{id}_{K_v}) \simeq (M_m(L_w) , \theta_i)$$
with $\theta_i$  given by $\theta((x_{st})) =
a_i^{-1}(\overline{x}_{ts})a_i$ where $x \mapsto \overline{x}$
denotes the nontrivial automorphism of $L_w/K_v$ and $a_i$ is a
Hermitian matrix. Furthermore, $\mathrm{rk}_{K_v}\, G_i$ equals the
Witt index $i(h_i)$ of the Hermitian form $h_i$ with matrix $a_i.$
Then (\ref{E:rank}) yields that $i(h_1) = i(h_2)$ which as we have
seen in the proof of Lemma \ref{L:local1}(i) implies that $h_1$ and
$h_2$ are similar.  Hence, 
$$
(D_1 \otimes_K K_v , \tau_1  \otimes \mathrm{id}_{K_v}) \simeq (D_2
\otimes_K K_v , \tau_2  \otimes \mathrm{id}_{K_v}),
$$
and therefore again the existence of $\iota_1^{(v)}$ implies the
existence of $\iota_2^{(v)}.$

Finally,  since $D_2$ is also a division algebra, we can use the
above argument to conclude that $(D_1 , \tau_1)$ and $(D_2 ,
\tau_2)$ in fact have the same $m$-dimensional commutative  \'etale
$L$-subalgebras invariant under the involutions as claimed.
\end{proof}

\vskip2mm

\noindent {\bf Remark 6.4.} (1) We have already noted prior to
Proposition \ref{P:An} that the assumption (2) of Theorem
\ref{T:equiv-tori} is not needed in its statement. So, it is worth
mentioning that assumption (1) in this situation is in fact 
satisfied automatically: for groups of outer type $A_n$ this follows
from Corollary 4.5 in \cite{PR7}, while for groups of inner type
$A_n$ it is much simpler, viz. in the notation used in the proof of
Proposition \ref{P:An}(1), one shows that the algebras $A_1$ and
$A_2$ contain a common field extension of $K$ of degree $(n+1)$.
This can also be established for groups of type $D_n$ with $n$ odd
using Proposition A of \cite{PR7}.

(2) We would like to clarify that assumption (2) of Theorem
\ref{T:equiv-tori} is only needed to conclude that {\brus
SH}$^2(T_1)$ is trivial for any maximal $K$-torus $T_1$ of $G_1.$
However, this fact holds for any maximal $K$-torus in a connected absolutely 
almost simple simply connected algebraic $K$-group of inner type $A_n$ {\it
unconditionally,} cf.\:Remark 9.13 in \cite{PR6}. So, the proof of
Theorem \ref{T:equiv-tori} actually yields part (1) of Proposition
\ref{P:An}.

\vskip2mm

\addtocounter{thm}{1}

\begin{cor}\label{C:Ap}
Let $G_1$ and $G_2$ be two connected absolutely almost simple simply connected
algebraic groups of type $A_{p-1},$ where $p$ is a prime, over a
number field $K.$ Assume that (\ref{E:rank}) holds and that $L_1 =
L_2 =: L.$ Then $G_1$ and $G_2$ have equivalent systems of maximal
$K$-tori.
\end{cor}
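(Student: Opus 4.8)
The plan is to deduce Corollary \ref{C:Ap} from Proposition \ref{P:An} by a short case analysis in which the primality of $p$ is used to exclude the one configuration not covered there. First note that $L_1 = L_2 =: L$ is automatic from (\ref{E:rank}) (cf.\ the proof of Theorem \ref{T:ArG1}), so $G_1$ and $G_2$ are simultaneously inner or outer forms over $K$. If they are inner forms, then Proposition \ref{P:An}(1) applies directly and gives the conclusion; the primality of $p$ is not needed in this case.

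Assume now that $G_1$ and $G_2$ are outer forms. Then each $G_i$ is of the shape $\mathrm{SU}(D_i , \tau_i)$, where $D_i$ is a central simple algebra of degree $p$ over $L$ carrying an involution $\tau_i$ of the second kind whose restriction to $L$ is the nontrivial automorphism $\sigma$ of $L/K$. Since $p$ is prime, the index of $D_i$ is either $1$ or $p$; that is, either $D_i$ is a central division algebra of degree $p$, or $D_i \simeq M_p(L)$, in which case $G_i = \mathrm{SU}(h_i)$ for a nondegenerate Hermitian form $h_i$ of rank $p$ on $L^p$ with respect to $\sigma$. If at least one of $D_1, D_2$ is a division algebra, then --- as is shown inside the proof of Proposition \ref{P:An}(2) using the Albert--Hasse--Brauer--Noether theorem together with (\ref{E:rank}) --- both of them are, and Proposition \ref{P:An}(2) yields that $G_1$ and $G_2$ have equivalent systems of maximal $K$-tori.

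It remains to treat the case $D_1 \simeq D_2 \simeq M_p(L)$, so that $G_i = \mathrm{SU}(h_i)$ with $h_i$ a Hermitian form of rank $p$ over $L/K$. Here I would argue as follows. By (\ref{E:rank}), the Witt index of $h_i$ over $K_v$, which equals $\mathrm{rk}_{K_v}\, G_i$, does not depend on $i$ for any $v \in V^K$. Over each completion $K_v$, a Hermitian form is determined up to similarity by its rank together with its Witt index (recall that over a nonarchimedean local field an anisotropic Hermitian form has dimension $\leqslant 2$, and over $\R$ an anisotropic Hermitian form is definite --- facts already used in the proof of Lemma \ref{L:local1}); hence $h_1$ and $h_2$ are similar over every $K_v$. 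Invoking the Hasse principle for similarity of Hermitian forms over the quadratic extension $L/K$ of number fields, we conclude that $h_1$ and $h_2$ are similar over $K$; since $\mathrm{SU}(h)$ depends on the Hermitian form only through its similarity class, this gives $G_1 \simeq G_2$ over $K$, and such an isomorphism trivially exhibits an equivalence of the systems of maximal $K$-tori. (Note that in the outer case $p$ is automatically odd, since type $A_1$ admits no outer forms.)

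I expect the last step --- passing from everywhere-local similarity of $h_1$ and $h_2$ to their similarity over $K$ --- to be the delicate point. The cleanest route is to quote the classical Hasse principle for similarity of Hermitian forms over a quadratic extension of number fields. An alternative that sidesteps it is to return to Theorem \ref{T:equiv-tori}: its hypothesis (1) is automatically satisfied for type $A_n$ by Remark 6.4(1), and by Remark 6.4(2) the only role of its hypothesis (2) is to force the vanishing of the Tate--Shafarevich group {\brus SH}$^2(T_1)$ for every maximal $K$-torus $T_1$ of $G_1$; so it would suffice to verify this vanishing for the maximal tori of an outer form of type $A_{p-1}$ with $p$ prime, which are, up to isogeny, norm-one tori of degree-$p$ \'etale $L$-algebras, a computation that can be done directly with character modules. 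I would carry out the argument via the first route and fall back on the torus computation only if necessary.
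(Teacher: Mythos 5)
Your proposal is correct and follows essentially the same route as the paper: inner forms via Proposition \ref{P:An}(1), outer forms with a division algebra via Proposition \ref{P:An}(2), and in the remaining Hermitian case the comparison of Witt indices from (\ref{E:rank}) as in the proof of Lemma \ref{L:local1}(i) to get local similarity of $h_1$ and $h_2$, then the Hasse principle for Hermitian forms over $L/K$ to conclude $G_1 \simeq G_2$ over $K$. The paper invokes this last local-global step implicitly, whereas you state it explicitly (and offer a fallback via Theorem \ref{T:equiv-tori}), but the argument is the same.
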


Indeed, if $G_1$ and $G_2$ are inner forms (in particular, if $p =
2$) then our assertion immediately follows from Proposition
\ref{P:An}(1). Furthermore, if one of the groups is of the form
$\mathrm{SU}(D , \tau)$ where $D$ is a central \emph{division}
algebra over $L$ of degree $p$ then we can use Proposition
\ref{P:An}(2). It remains to consider the case where $G_i =
\mathrm{SU}(L , h_i)$ with $h_i$ a nondegenerate hermitian form on
$L^p$ for $i = 1, 2.$ Then the proof of Lemma \ref{L:local1}(i)
shows that $h_1$ and $h_2$ are similar over $L_w$ for all $w \in
V_2^L.$ But then $h_1$ and $h_2$ are similar, i.e.,  $G_1 \simeq G_2$
over $K$ and there is nothing to prove.

\vskip2mm

Here is a companion to Theorem \ref{T:ArG2} for groups of types $A,$
$D$ and $E_6.$
\begin{thm}\label{T:ADE6}
Let $G_1$ and $G_2$ be two connected absolutely almost simple algebraic groups
of the same Killing-Cartan type which is one of the following:
$A_n$, $D_{2n+1}$ $(n > 1)$ or $E_6$ defined over a field $F$ of
characteristic zero, and let $\Gamma_i \subset G_i(F)$ be a
Zariski-dense $(\cG_i, K_i, S_i)$-arithmetic subgroup. Assume that
for at least one $i \in \{1 , 2\}$ there exists $v^{(i)}_0 \in
V^{K_i}$ such that $\cG_i$ is anisotropic over
${(K_i)}_{v^{(i)}_0}$. Then either condition $(C_i)$ holds for some
$i \in \{1 , 2\}$, or $K_1 = K_2 =: K$ and the groups $\cG_1$ and
$\cG_2$ have equivalent systems of maximal $K$-tori.
\end{thm}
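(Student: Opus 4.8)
The plan is to argue contrapositively to the asserted dichotomy: assuming that neither $(C_1)$ nor $(C_2)$ holds, I will deduce that $K_1 = K_2 =: K$ and that $\cG_1$ and $\cG_2$ have equivalent systems of maximal $K$-tori. The strategy is to extract from the failure of both properties $(C_i)$ exactly the hypotheses needed to invoke Theorem~\ref{T:equiv-tori} for the simply connected covers of $\cG_1$ and $\cG_2$, in the same spirit as the proofs of Theorems~\ref{T:ArG2} and \ref{T:ArG3}, but with Theorem~\ref{T:equiv-tori} playing the role that Theorems~\ref{T:D2n-G'} and \ref{T:equal-rank} played there.

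First I would pin down the arithmetic data. Since the trace field of a Zariski-dense $(\cG_i, K_i, S_i)$-arithmetic subgroup equals $K_i$, and $w_1 = w_2$ because $G_1$ and $G_2$ are of the same Killing--Cartan type, Theorem~\ref{T:F1} shows that the failure of both $(C_1)$ and $(C_2)$ forces $K_1 = K_2 =: K$. Then parts (b) and (c) of Theorem~\ref{T:ArG1} give $\mathrm{rk}_{K_v}\,\cG_1 = \mathrm{rk}_{K_v}\,\cG_2$ for all $v \in V^K$ and $L_1 = L_2 =: L$. The first of these is the hypothesis (\ref{E:rk}) of Theorem~\ref{T:equiv-tori}, and together with the standing assumption that some $\cG_i$ is anisotropic over $(K_i)_{v^{(i)}_0} = K_{v^{(i)}_0}$ it shows that both groups are anisotropic over $K_{v_0}$ for a common place $v_0 \in V^K$, which is hypothesis (2) of that theorem.

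Next I would manufacture the matching maximal $K$-tori. Fixing a finite set $V \subset V^K$ containing $S_1$, all archimedean places, and $v_0$, and taking the field $L = L_1 = L_2$, Remark 5.2 (applicable because $(C_1)$ fails) produces a maximal $K$-torus $T_1$ of $\cG_1$ with $\theta_{T_1}(\Ga(L_{T_1}/L)) \supset W(\cG_1, T_1)$ and $\mathrm{rk}_{K_v}\,T_1 = \mathrm{rk}_{K_v}\,\cG_1$ for all $v \in V$, together with a maximal $K$-torus $T_2$ of $\cG_2$ and a $K$-isogeny $T_2 \to T_1$. Exactly as in the derivation of Theorem~\ref{T:D2n-G'}, Lemma 4.3 of \cite{PR6} lets me use the largeness of $\theta_{T_1}(\Ga(L_{T_1}/L))$ to replace this isogeny by a $K$-isomorphism $T_2 \to T_1$ carrying $\Phi(\cG_2, T_2)$ onto $\Phi(\cG_1, T_1)$ and extending to a $\overline{K}$-isomorphism $\cG_2 \to \cG_1$. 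Passing to the simply connected covers $G^{\mathrm{sc}}_i$, the preimages $\widetilde{T}_i$ of $T_i$ are maximal $K$-tori, and the root-system-preserving isomorphism lifts to a $K$-isomorphism $\widetilde{T}_2 \to \widetilde{T}_1$ extending to a $\overline{K}$-isomorphism $G^{\mathrm{sc}}_2 \to G^{\mathrm{sc}}_1$ (the Galois action on the weight lattice being the restriction of the one on the root lattice). After inverting, this is hypothesis (1) of Theorem~\ref{T:equiv-tori}; and since $V$ contains every archimedean place while isogenous tori and groups have the same local ranks, restricting these data to each real place $v$ of $K$ supplies maximal $K_v$-tori of full local rank with a $K_v$-isomorphism extending to a $\overline{K_v}$-isomorphism $G^{\mathrm{sc}}_1 \to G^{\mathrm{sc}}_2$ — precisely the extra input Theorem~\ref{T:equiv-tori} demands in type $D_{2n+1}$.

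Applying Theorem~\ref{T:equiv-tori} to $G^{\mathrm{sc}}_1$ and $G^{\mathrm{sc}}_2$ then yields that these groups have equivalent systems of maximal $K$-tori, hence so do the adjoint groups $\cG_1$ and $\cG_2$, as recorded in \S\ref{S:ADE}; this is the desired conclusion. I expect the main obstacle to be the construction in the third paragraph: one needs a single torus $T_1$ (with its isogenous partner $T_2$) that simultaneously carries the large Galois image required to upgrade the isogeny to a root-preserving isomorphism via Lemma 4.3 of \cite{PR6} \emph{and} has full local rank at all archimedean places for the type-$D_{2n+1}$ clause of Theorem~\ref{T:equiv-tori}, and one must verify that Remark 5.2 indeed delivers exactly such a torus; the accompanying descent of the isomorphism data through the central isogenies to the simply connected covers is routine but needs a little care with the Galois actions on the character lattices.
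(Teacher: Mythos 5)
Your proposal is correct and follows essentially the same route as the paper: failure of both $(C_i)$ is fed through Theorem \ref{T:F1}/Theorem \ref{T:ArG1} (equivalently Remark 5.2) to produce a pair of isogenous maximal $K$-tori of full local rank on a suitable finite set of places, the isogeny is upgraded to a root-preserving $K$-isomorphism via Lemma 4.3 of \cite{PR6}, and Theorem \ref{T:equiv-tori} is then applied to the simply connected covers. The paper's proof differs only cosmetically, constructing the tori by repeating the argument of Theorem \ref{T:ArG1} rather than citing Remark 5.2 explicitly.
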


(We note that if $\cG_1$ and $\cG_2$ have equivalent systems of
maximal $K$-tori then $(C_i)$ can hold only if $S_1 \neq S_2$.)

\begin{proof}
We can obviously assume that for $i = 1, 2,$ the group $G_i$ is
adjoint and $\Gamma_i \subset \cG_i(K_i)$. According to Theorem
\ref{T:ArG1}, if neither $(C_1)$ nor $(C_2)$ hold, then we
have
$$
K_1 = K_2 =: K, \ \ \ L_1 = L_2 =:L, \ \ \ S_1 = S_2 =: S
$$
and
$$
 \mathrm{rk}_{K_v}\,\cG_1 = \mathrm{rk}_{K_v}\,\cG_2 \ \
\text{for all} \ \ v \in V^K.
$$
Furthermore, there exists $m \geqslant 1$ such that any $m$
multiplicatively independent semi-simple elements $\gamma_1, \ldots
, \gamma_m \in \Gamma_1$ are necessarily weakly contained in
$\Gamma_2$. Arguing as in the proof of Theorem \ref{T:ArG1}, we can
find $m$ multiplicatively independent elements $\gamma_1, \ldots ,
\gamma_m \in \Gamma_1$ so that the corresponding tori $T_i =
Z_{\cG_1}(\gamma_i)^{\circ}$ satisfy the following:

\vskip2mm

\noindent $\bullet$ $\theta_{T_i}(\Ga(L_{T_i}/L)) = W(\cG_1 , T_i)$;

\vskip1mm

\noindent $\bullet$ $\mathrm{rk}_{K_v}\, T_i = \mathrm{rk}_{K_v}\,
\cG_1$ for all $v \in S$.

\vskip2mm

\noindent Then the fact that $\gamma_1, \ldots , \gamma_m$ are
weakly contained in $\Gamma_2$ would imply that there exists a maximal $K$-torus $T_2^0$ of  $\cG_2$ and an $i
\leqslant m$ such that there is a $K$-isogeny $T^0_2
\to T^0_1 :=T_i$. Since the common type
of $\cG_1$ and $\cG_2$ is different from $B_2 = C_2$, $F_4$ and
$G_2$, it follows from Lemma 4.3 and Remark 4.4 in \cite{PR6} that
one can scale the isogeny so that it induces an isomorphism between
the root systems $\Phi(\cG_1 , T^0_1)$ and $\Phi(\cG_2 , T^0_2)$,
and therefore extends to a $\overline{K}$-isomorphism $\cG_1 \to
\cG_2$ as these groups are adjoint. Passing to the simply connected
groups $\widetilde{\cG}_1$ and $\widetilde{\cG}_2$ and the corresponding
tori $\widetilde{T}^0_1$ and $\widetilde{T}^0_2$, we see that there exists a
$K$-isomorphism $\widetilde{T}^0_1 \to \widetilde{T}^0_2$ that extends to a
$\overline{K}$-isomorphism $\widetilde{\cG}_1 \to \widetilde{\cG}_2$. Note
that by our construction we have $\mathrm{rk}_{K_v}\,T^0_i =
\mathrm{rk}_{K_v}\,\cG_i$ for $i = 1, 2$ and all real places $v$ of
$K$. In view of our assumptions, we can invoke Theorem
\ref{T:equiv-tori} to conclude that $\widetilde{\cG}_1$ and
$\widetilde{\cG}_2$ have equivalent systems of maximal $K$-tori, and
then the same remains true for $\cG_1$ and $\cG_2$.
\end{proof}

\vskip3mm

It follows from Proposition \ref{P:An} and Corollary \ref{C:Ap} that
the assertion of Theorem \ref{T:ADE6} remains valid without the
assumption that there be $v^{(i)}_0 \in V^{K_i}$ such that $\cG_i$
is ${(K_i)}_{v^{(i)}_0}$-anisotropic for groups of type $A_n$ in the
following three situations: (1) one of the $\cG_i$'s is an inner
form; (2) the simply connected cover of one of the $\cG_i$'s is isomorphic to  $\mathrm{SU}(D ,
\tau)$ where $D$ is a central {\it division} algebra over $L$ with
an involution $\tau$ of the second kind that restricts to the
nontrivial automorphism of $L/K$; (3) $n = p-1$ where $p$ is a
prime.

\section{Fields generated by the lengths of closed
geodesics}\label{S:FGeod}

Let $G$ be an absolutely simple adjoint 
algebraic $\R$-group such that $\mathcal{G} := G(\R)$ is noncompact. Pick
a maximal compact subgroup $\mathcal{K}$ of $\mathcal{G},$ and let
$\fX = \mathcal{K} \backslash \mathcal{G}$ denote the corresponding
symmetric space considered as a Riemannian manifold with the metric
induced by the Killing form. Given a discrete torsion-free subgroup
$\Gamma \subset \mathcal{G},$ we consider the associated locally
symmetric space $\fX_{\Gamma} := \fX/\Gamma.$ It was shown in
\cite{PR6}, 8.4, that every (nontrivial) semisimple element $\gamma
\in \Gamma$ gives rise to a closed geodesic $c_{\gamma}$ in
$\fX_{\Gamma},$ and conversely, every closed geodesic can be
obtained that way. Moreover, the length $\ell(c_{\gamma})$ can be
written in the form $(1/n_{\gamma}) \cdot \lambda_{\Gamma}(\gamma)$
where $n_{\gamma} \geqslant 1$ is an integer and
\begin{equation}\label{E:LG-1}
\lambda_{\Gamma}(\gamma) = \left(\sum_{\alpha} (\log \vert
\alpha(\gamma) \vert)^2 \right)^{1/2}
\end{equation}
where the summation is over all roots $\alpha$ of $G$ with respect
to an arbitrary maximal $\R$-torus $T$ containing $\gamma$ (Proposition 8.5 of \cite{PR6}). In
particular, for the set $L(\fX_{\Gamma})$  of lengths of all closed
geodesics in $\fX_{\Gamma}$ we have
$$
\Q \cdot L(\fX_{\Gamma}) = \Q \cdot \{\lambda_{\Gamma}(\gamma) \:
\vert \: \gamma \in \Gamma \ \text{nontrivial semisimple} \},
$$
and the subfield of $\R$ generated by $L(\fX_{\Gamma})$ coincides
with the subfield generated by the values $\lambda_{\Gamma}(\gamma)$
for all semisimple $\gamma \in \Gamma.$

\vskip2mm

Now, let $G_1$ and $G_2$ be two absolutely simple adjoint algebraic
$\R$-groups such that  the group  $\mathcal{G}_i :=G_i(\R)$ is
noncompact for both $i = 1, 2.$ For each $i \in \{1 , 2\}$, we pick
a maximal compact subgroup $\mathcal{K}_i$ of $\mathcal{G}_i :=
G_i(\R)$ and consider the symmetric space $\fX_i = \mathcal{K}_i
\backslash \mathcal{G}_i.$ Furthermore, given a discrete
torsion-free Zariski-dense subgroup $\Gamma_i$ of $\mathcal{G}_i,$
we let $\fX_{\Gamma_i} := \fX_i / \Gamma_i$ denote the associated
locally symmetric space.  As above, for $i = 1, 2$, we let $w_i$
denote the order of the Weyl group of $G_i$ with respect to a
maximal torus, and let $K_{\Gamma_i}$ be the field of definition of
$\Gamma_i,$ i.e. the subfield of $\R$ generated by the traces
$\mathrm{Tr}\: \mathrm{Ad}\: \gamma$ for $\gamma \in \Gamma_i$. In
this section, we will  focus our attention on the fields
$\cF_i$ generated by the set $L(\fX_{\Gamma_i}),$ for
$i = 1, 2.$

\vskip1mm

The results of this section depend on the truth of Schanuel's
conjecture from transcendental number theory (hence they are {\it
conditional}). For the reader's convenience we recall its
statement (cf.\:\cite{A}, \:\cite{Ba}, p.\,120).

\vskip2mm

\noindent {\bf Schanuel's conjecture.} {\it If $z_1, \ldots , z_n
\in \C$ are linearly independent over $\Q,$ then the transcendence
degree (over $\Q$) of the field generated by
$$
z_1, \ldots , z_n; \  e^{z_1}, \ldots , e^{z_n}
$$
is $\geqslant n.$}

\vskip3mm

Assuming Schanuel's conjecture and developing the techniques of
\cite{PR5}, we prove the following proposition which enables us to
connect the results of the previous sections to some geometric
problems involving the sets $L(\fX_{\Gamma_i})$ and the fields
$\cF_i.$

\vskip1mm

\begin{prop}\label{P:LG1}
Let $\mathscr{K} \subset \R$ be a subfield of finite
transcendence degree $d$ over $\Q,$  let $G_1$ and $G_2$ be
semisimple $\mathscr{K}$-groups, and for $i \in \{1 , 2\}$, let
$\Gamma_i \subset G_i(\mathscr{K}) \subset G_i(\R)$ be a discrete
Zariski-dense torsion-free subgroup. As above, for $i =1, 2$, let $\mathscr{F}_i$ be the
subfield of $\R$ generated by the 
$\lambda_{\Gamma_i}(\gamma)$ for all nontrivial semi-simple $\gamma
\in \Gamma_i$, where $\lambda_{\Gamma_i}(\gamma)$ is given by
equation (\ref{E:LG-1}) for $G=G_i.$ If
nontrivial semisimple elements $\gamma_1, \ldots , \gamma_m \in
\Gamma_1$ are multiplicatively independent and are not weakly
contained in $\Gamma_2$, then the transcendence degree of
$\mathscr{F}_2(\lambda_{\Gamma_i}(\gamma_1), \ldots ,
\lambda_{\Gamma_i}(\gamma_m))$ over $\mathscr{F}_2$ is $\geqslant m
- d.$
\end{prop}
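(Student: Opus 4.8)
The plan is to reduce the statement to a transcendence assertion about logarithms of numbers algebraic over $\mathscr{K}$ and then to invoke Schanuel's conjecture. First I would reduce to logarithms. For $i\le m$ fix a maximal $\R$-torus $T_i$ of $G_1$ with $\gamma_i\in T_i(\mathscr{K})$, and for each nontrivial semisimple $\delta\in\Gamma_2$ a maximal $\R$-torus $S_\delta$ of $G_2$ with $\delta\in S_\delta(\mathscr{K})$. Every value $\alpha(\gamma_i)$ (for $\alpha$ a root of $(G_1,T_i)$) and $\beta(\delta)$ lies in a finite extension of $\mathscr{K}$ inside $\C$, hence is algebraic over $\mathscr{K}$; therefore $|\alpha(\gamma_i)|=\sqrt{\alpha(\gamma_i)\,\overline{\alpha(\gamma_i)}}$ is algebraic over $\mathscr{K}$ as well. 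Writing $\lambda_i:=\lambda_{\Gamma_1}(\gamma_i)$, formula (\ref{E:LG-1}) gives $\lambda_i^{\,2}=\sum_\alpha(\log|\alpha(\gamma_i)|)^2$. Let $\mathscr{E}_2$ be the subfield of $\R$ generated over $\Q$ by all the numbers $\log|\beta(\delta)|$ ($\delta\in\Gamma_2$ nontrivial semisimple, $\beta$ a root of $(G_2,S_\delta)$). Since each $\lambda_{\Gamma_2}(\delta)$ is a square root of an element of $\mathscr{E}_2$, the field $\mathscr{F}_2$ is contained in the algebraic closure of $\mathscr{E}_2$; as $\lambda_i$ is a square root of $\lambda_i^{\,2}$, a routine argument with algebraic extensions (using that algebraic independence over a field persists over its algebraic closure) shows it suffices to prove $\mathrm{trdeg}_{\mathscr{E}_2}\mathscr{E}_2(\lambda_1^{\,2},\dots,\lambda_m^{\,2})\ge m-d$.

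The crux is a direct-sum decomposition. Let $P_i\subset\R$ be the $\Q$-span of $\{\log|\alpha(\gamma_i)|\}_\alpha$, set $n_i=\dim_\Q P_i$, and let $Q\subset\R$ be the $\Q$-span of all the numbers $\log|\beta(\delta)|$. I claim $P_1+\dots+P_m+Q=P_1\oplus\dots\oplus P_m\oplus Q$; equivalently, every $\Q$-linear relation among the generators $\{\log|\alpha(\gamma_i)|\}\cup\{\log|\beta(\delta)|\}$ is a sum of relations each supported inside a single $P_i$ or inside $Q$. To see this, exponentiate a putative relation and square it: since complex conjugation $\sigma$ acts on $X(T_i)$ through $\theta_{T_i}$ one has $\overline{\chi(\gamma)}=(\sigma\chi)(\gamma)$, hence $|\chi(\gamma)|^2=(\chi+\sigma\chi)(\gamma)$, so the relation becomes an identity $\prod_i\tilde\chi_i(\gamma_i)\cdot\prod_\delta\tilde\psi_\delta(\delta)=1$ with $\tilde\chi_i\in X(T_i)$ and $\tilde\psi_\delta\in X(S_\delta)$. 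Because $\gamma_1,\dots,\gamma_m$ are not weakly contained in $\Gamma_2$, the common value $\prod_i\tilde\chi_i(\gamma_i)=(\prod_\delta\tilde\psi_\delta(\delta))^{-1}$ must be $1$ (if no $\delta$ occurs this is immediate); then multiplicative independence of $\gamma_1,\dots,\gamma_m$ forces $\tilde\chi_i(\gamma_i)=1$ for every $i$, i.e.\ $|\chi_i(\gamma_i)|=1$, which says precisely that the $P_i$-component of the original relation vanishes. Finally $n_i\ge1$ for every $i$: since $\Gamma_1$ is torsion-free the nontrivial semisimple $\gamma_i$ has infinite order, and if $|\alpha(\gamma_i)|=1$ for all roots $\alpha$ then $\mathrm{Ad}\,\gamma_i$ would generate a relatively compact subgroup, forcing the discrete group $\langle\gamma_i\rangle$ to be finite.

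Now Schanuel enters. Choose a $\Q$-basis $\{z_{i,1},\dots,z_{i,n_i}\}$ of each $P_i$; each $z_{i,k}$ is a $\Q$-combination of the $\log|\alpha(\gamma_i)|$, so $e^{z_{i,k}}$ is algebraic over $\mathscr{K}$. For any finite $\Q$-linearly independent $Y\subset Q$ the set $\{z_{i,k}\}\cup Y$ is $\Q$-linearly independent by the previous step and all its exponentials are algebraic over $\mathscr{K}$, so Schanuel's conjecture gives $\mathrm{trdeg}_\Q\mathscr{K}(\{z_{i,k}\},Y)\ge\sum_i n_i+|Y|$; since $\mathrm{trdeg}_\Q\mathscr{K}(Y)\le d+|Y|$ this yields $\mathrm{trdeg}_{\mathscr{K}(Y)}\mathscr{K}(Y)(\{z_{i,k}\})\ge\sum_i n_i-d$. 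Letting $Y$ exhaust $Q$ (it suffices to absorb the finitely many algebraic relations of the $z_{i,k}$ over $\mathscr{K}(Q)$) and using $\mathscr{E}_2\subseteq\mathscr{K}(Q)$, we get $\mathrm{trdeg}_{\mathscr{E}_2}\mathscr{E}_2(\{z_{i,k}\})\ge\sum_i n_i-d$. To pass to the $\lambda_i^{\,2}$, note that $\lambda_i^{\,2}=\sum_\alpha(\log|\alpha(\gamma_i)|)^2$ is, in any basis of $P_i$, a positive-definite rational quadratic form, so I may take the basis of $P_i$ with $\lambda_i^{\,2}=\sum_k c_{i,k}z_{i,k}^{\,2}$, all $c_{i,k}\in\Q_{>0}$. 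Then $z_{i,1}$ is algebraic over $\mathscr{E}_2(\lambda_i^{\,2},z_{i,2},\dots,z_{i,n_i})$, so $\mathscr{E}_2(\{\lambda_i^{\,2}\}_i\cup\{z_{i,k}:k\ge2\})$ has the same transcendence degree over $\mathscr{E}_2$ as $\mathscr{E}_2(\{z_{i,k}\})$, namely $\ge\sum_i n_i-d$; discarding the $\sum_i(n_i-1)$ generators $z_{i,k}$ with $k\ge2$ lowers it by at most $\sum_i n_i-m$, leaving $\mathrm{trdeg}_{\mathscr{E}_2}\mathscr{E}_2(\lambda_1^{\,2},\dots,\lambda_m^{\,2})\ge(\sum_i n_i-d)-(\sum_i n_i-m)=m-d$, which by the first step finishes the proof.

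The main obstacle is the decomposition in the second step: it is the only place where \emph{both} hypotheses on $\gamma_1,\dots,\gamma_m$ --- multiplicative independence and non-weak-containment --- are used, and it rests on the identity $|\chi(\gamma)|^2=(\chi+\sigma\chi)(\gamma)$, which turns an absolute-value relation into a genuine relation among character values to which Definitions 1 and 2 of \S\ref{S:I} apply. The remaining points are routine: handling the possibly infinite-dimensional space $Q$ by exhausting it with finite subsets, and the quadratic-form bookkeeping in the last step.
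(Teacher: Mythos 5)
Your proof is correct, and its core is the same as the paper's: you turn the lengths into rational quadratic expressions in logarithms of character values, use multiplicative independence together with non-weak-containment to get $\Q$-linear independence of the combined family of logarithms (your identity $|\chi(\gamma)|^2=(\chi+\sigma\chi)(\gamma)$ plays the role of the paper's appeal to the expression (\ref{E:LG-2}) in terms of positive characters), and then apply Schanuel, using that all the exponentials are algebraic over $\mathscr{K}$ and hence cost at most $d$ in transcendence degree. The two places where you genuinely deviate are organizational and both check out. First, you work with the full field $\mathscr{E}_2$ generated by all the logarithms coming from $\Gamma_2$ and handle its infinite generation by an exhaustion with finite $\Q$-linearly independent subsets $Y\subset Q$, whereas the paper reduces at the outset to finitely generated subfields $\mathscr{P}_{\Theta_2}=\Q(B)$; these are equivalent finiteness reductions performed at different moments. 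Second, your endgame differs from the paper's: the paper shows that the exceptional set $C$ (of size at most $d$) meets at most $d$ of the blocks $A_{\gamma_j}$, discards those blocks, and invokes the partition lemma (Lemma \ref{L:LG1}) to conclude that the remaining $m-d$ values $\lambda_{\Gamma_1}(\gamma_j)^2$ are algebraically independent over $\Q(B)$; you instead diagonalize each positive-definite form as $\lambda_i^2=\sum_k c_{i,k}z_{i,k}^2$, observe that $z_{i,1}$ is algebraic over $\mathscr{E}_2(\lambda_i^2,z_{i,2},\dots,z_{i,n_i})$, and do a generator count, which yields $\mathrm{tr.deg}\geq(\sum_i n_i-d)-(\sum_i n_i-m)=m-d$ directly without renumbering or the partition lemma. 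Your version only needs one nonzero diagonal coefficient per block (positive-definiteness is more than enough), while the paper's version needs only that each $q_\gamma$ is nonzero; both are slight overkill relative to what is used, and either bookkeeping is fine.
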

\begin{proof}
We can assume that $m > d$ as otherwise there is nothing to prove.
It was shown in \cite{PR6} (see the remark after Proposition 8.5)
that for $i = 1, 2$ and any nontrivial semisimple element $\gamma
\in \Gamma_i$, the value $\lambda_{\Gamma_i}(\gamma)^2$, where
$\lambda_{\Gamma_i}(\gamma)$ is provided by (\ref{E:LG-1}), can be
written in the form
\begin{equation}\label{E:LG-2}
\lambda_{\Gamma_i}(\gamma)^2 = \sum_{k = 1}^p s_k \left(\log
\chi_k(\gamma) \right)^2,
\end{equation}
where $\chi_1, \ldots , \chi_p$ are some {\it positive} characters
of a maximal $\R$-torus $T$ of $G_i$ containing $\gamma,$ and $s_1,
\ldots , s_p$ are some positive rational numbers. Furthermore, we
note that if $\gamma \in \Gamma_i$ is a semisimple element $\neq 1$
and $T$ is a maximal $\R$-torus of $G_i$ containing $\gamma$ then
the condition $\vert \alpha(\gamma) \vert = 1$ for all roots
$\alpha$ of $G_i$ with respect to $T$ would imply that the
nontrivial subgroup $\langle \gamma \rangle$ is discrete and
relatively compact, hence finite. This is impossible as $\Gamma_i$
is torsion-free, so we conclude from (\ref{E:LG-1}) that
$\lambda_{\Gamma_i}(\gamma) > 0$ for any nontrivial $\gamma \in
\Gamma_i.$ Thus, assuming that $\gamma \in \Gamma_i$ is nontrivial
and renumbering the characters in (\ref{E:LG-2}), we can arrange so
that
$$
a_{\gamma,1} = \log \chi_1(\gamma), \ldots , a_{\gamma,d_\gamma} = \log \chi_{d_\gamma}(\gamma) \ \
\text{with} \ d_{\gamma} \geqslant 1,
$$
form a basis of the $\Q$-vector subspace of $\R$ spanned by $\log
\chi_1(\gamma), \ldots , \log \chi_p(\gamma).$ Then we can write
$\lambda_{\Gamma_i}(\gamma)^2 = q_{\gamma}(a_{\gamma,1}, \ldots ,
a_{\gamma,d_\gamma})$ where $q_{\gamma}(t_1, \ldots , t_{d_\gamma})$ is a
nontrivial rational quadratic form. Thus, for any nontrivial
semisimple $\gamma \in \Gamma_i$ there exists a finite set
$A_{\gamma} = \{a_{\gamma,1}, \ldots , a_{\gamma,d_{\gamma}} \},$ with $d_{\gamma}
\geqslant 1,$ of real numbers linearly independent over $\Q,$ each
of which is the logarithm of the value of a positive character on
$\gamma,$ such that
$$\lambda_{\Gamma_i}(\gamma)^2 = q_{\gamma}(a_{\gamma,1}, \ldots ,
a_{\gamma, d_{\gamma}}),$$ where $q_{\gamma}(t_1, \ldots , t_{d_{\gamma}})$
is a nonzero rational quadratic form. We fix such $A_{\gamma}$ and
$q_{\gamma}$ for each nontrivial semi-simple $\gamma \in \Gamma_i$,
where  $i = 1, 2$, for the remainder of the argument. Let
$\mathscr{M}_i$ be the subfield of $\R$ generated by the values
$\lambda_{\Gamma_i}(\gamma)^2 = q_{\gamma}(a_{\gamma,1}, \ldots ,
a_{\gamma,d_{\gamma}})$ for all nontrivial semisimple $\gamma \in
\Gamma_i$.

\vskip2mm

Now, suppose $\gamma_1, \ldots , \gamma_m \in \Gamma_1$ are as in
the statement of the proposition. It is enough to show that for any {\it
finitely generated} subfield $\mathscr{M}'_2 \subset \mathscr{M}_2$, 
we have
$$\mathrm{tr.\: deg}_{\mathscr{M}'_2}\:
\mathscr{M}'_2(\lambda_{\Gamma_i}(\gamma_1)^2, \ldots ,
\lambda_{\Gamma_i}(\gamma_m)^2) \geqslant m - d.$$ Indeed, this
would imply that $ \mathrm{tr.\: deg}_{\mathscr{M}_2}\:
\mathscr{M}_2(\lambda_{\Gamma_i}(\gamma_1)^2, \ldots ,
\lambda_{\Gamma_i}(\gamma_m)^2)$, and hence (as
$\mathscr{F}_2/\mathscr{M}_2$ is algebraic) $ \mathrm{tr.\:
deg}_{\mathscr{F}_2}\: \mathscr{F}_2(\lambda_{\Gamma_i}(\gamma_1)^2,
\ldots , \lambda_{\Gamma_i}(\gamma_m)^2)$ is $\geqslant m - d$,
yielding the proposition. We now note that any finitely generated
subfield $\mathscr{M}'_2 \subset \mathscr{M}_2$ is contained in a
subfield of the form $\mathscr{P}_{\Theta_2}$ for some finite set
$\Theta_2 = \{\gamma_1^{(2)}, \ldots , \gamma_{m_2}^{(2)}\}$ of
nontrivial semisimple elements of $\Gamma_2,$ which by definition is
generated by $\bigcup_{k = 1}^{m_2} A_{\gamma_k^{(2)}}.$ So, it is
enough to prove that if $\gamma_1, \ldots , \gamma_m \in \Gamma_1$
are as in the statement of the proposition then for any finite set
$\Theta_2$ of nontrivial semi-simple elements of $\Gamma_2$ we
have
\begin{equation}\label{E:TD-1}
\mathrm{tr.\: deg}_{\mathscr{P}_{\Theta_2}}\,
\mathscr{P}_{\Theta_2}(\lambda_{\Gamma_i}(\gamma_1), \ldots ,
\lambda_{\Gamma_i}(\gamma_m)) \geqslant m - d.
\end{equation}

Since the elements $\gamma_1, \ldots , \gamma_m$ are
multiplicatively independent, the elements of
$$
A = \bigcup_{j = 1}^m A_{\gamma_j}
$$
are linearly independent (over $\Q$). Let $B$ be a maximal linearly
independent (over $\Q$) subset of $\bigcup_{k = 1}^{m_2}
A_{\gamma_{k}^{(2)}}$. Since $\gamma_1, \ldots , \gamma_m$ are not
weakly contained in $\Gamma_2$, the elements of $A \cup B$ are
linearly independent over $\Q.$ Let $\alpha = \vert A \vert$ and
$\beta = \vert B \vert.$ Then by Schanuel's conjecture, the
transcendence degree over $\Q$ of the field generated by
$$
A \cup B \cup \widetilde{A} \cup \widetilde{B}, \ \text{where} \ \
\widetilde{A} = \{ e^s \:\vert\: s \in A\} \ \ \text{and} \ \ \widetilde{B}
= \{ e^s \:\vert\: s \in B \},
$$
is $\geqslant \alpha + \beta.$ But the set $\widetilde{A} \cup
\widetilde{B}$ consists of the values of certain characters on certain
semi-simple elements lying in  $\Gamma_i \subset G_i(\mathscr{K})$, 
and therefore is contained in
$\overline{\mathscr{K}}.$ It follows that the transcendence degree
over $\Q$ of the field generated by $\widetilde{A} \cup \widetilde{B}$ is
$\leqslant d,$ and therefore the transcendence degree of the field
generated by $A \cup B$ is $\geqslant \alpha + \beta - d.$ So,
$$
\mathrm{tr.\: deg}_{\Q(B)}\, \Q(A \cup B) = \mathrm{tr.\:
deg}_{\Q}\,\Q(A \cup B) - \mathrm{tr.\: deg}_{\Q}\,\Q(B) \geqslant
$$
$$
\geqslant (\alpha + \beta - d) - \beta = \alpha - d.
$$
Thus, there exists a subset $C \subset A$ of cardinality $\leqslant
d$ such that the elements of $A \setminus C$ are algebraically
independent over $\Q(B)$. Since $C$ intersects at most $d$ of the
sets $A_{\gamma_j}$, $j\leqslant m$,  we see that after renumbering,
we can assume that the elements of
$$
D = \bigcup_{j = 1}^{m-d} A_{\gamma_j}
$$
are algebraically independent over $\Q(B)$.  Since $\Q(B)$ coincides with $\mathscr{P}_{\Theta_2}$, (\ref{E:TD-1}) follows from the following simple lemma.\end{proof}

\begin{lemma}\label{L:LG1}
Let $F$ be a field, and let $E = F(t_1, \ldots , t_n)$, where $t_1,
\ldots , t_n$ are algebraically independent over $F.$ Let
$$
\{1, 2, \ldots , n\} = I_1 \cup \cdots \cup I_s
$$
be an arbitrary partition, and let $E_j$ be the field generated over
$F$ by the $t_i$ for $i \in I_j.$ For each $j \in \{1, \ldots ,
s\},$ pick $f_j \in E_j \setminus F.$ Then
$$
\mathrm{tr.\: deg}_F\,F(f_1, \ldots , f_s) = s.
$$
\end{lemma}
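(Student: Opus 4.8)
The plan is to argue by induction on $s$. For $s = 1$ the statement is immediate: the extension $E_1 = F(t_i : i \in I_1)$ is purely transcendental over $F$, so $F$ is algebraically closed in $E_1$, and therefore $f_1 \in E_1 \setminus F$ is transcendental over $F$, i.e.\ $\mathrm{tr.\: deg}_F\, F(f_1) = 1$.

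For the inductive step I would assume the result for $s-1$, so that $f_1, \ldots , f_{s-1}$ are algebraically independent over $F$, and set $E' = F(t_i : i \in I_1 \cup \cdots \cup I_{s-1})$; note that $f_1, \ldots , f_{s-1} \in E'$ while $f_s \in E_s = F(t_i : i \in I_s)$. Since $\mathrm{tr.\: deg}_F\, F(f_1, \ldots , f_s) \leqslant s$ trivially, it suffices to show that $f_s$ is transcendental over $F(f_1, \ldots , f_{s-1})$, and for this it is enough to show that $f_s$ is transcendental over the possibly larger field $E'$. Here I would invoke two standard consequences of the algebraic independence of $t_1, \ldots , t_n$ over $F$. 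First, because $I_s$ is the complement of $I_1 \cup \cdots \cup I_{s-1}$, a transcendence-degree count shows that the $t_i$ with $i \in I_s$ remain algebraically independent over $E'$; hence $E = E'(t_i : i \in I_s)$ is a rational function field over $E'$, so $E'$ is algebraically closed in $E$. Second, $E'$ and $E_s$ are generated over $F$ by disjoint subsets of the algebraically independent family $\{t_1, \ldots , t_n\}$, hence are linearly disjoint over $F$; in particular $E' \cap E_s = F$. Now if $f_s$ were algebraic over $E'$, then, being an element of $E_s \subseteq E$ that is algebraic over the algebraically closed subfield $E'$, it would lie in $E'$, whence $f_s \in E' \cap E_s = F$, contradicting $f_s \in E_s \setminus F$. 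Therefore $f_s$ is transcendental over $E'$, a fortiori over $F(f_1, \ldots , f_{s-1})$, and $\mathrm{tr.\: deg}_F\, F(f_1, \ldots , f_s) = (s-1) + 1 = s$, completing the induction.

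I do not expect any genuine obstacle: the entire content sits in the two standard facts used above. If a self-contained proof that $E' \cap E_s = F$ is desired, I would supply it directly by writing an element $a$ of the intersection both as a reduced fraction $p/q$ in the variables $t_i$ with $i \in I_s$ and as a fraction in the remaining variables, clearing denominators, and invoking unique factorization in $F[t_1, \ldots , t_n]$ to conclude that $p$ and $q$ --- being coprime and each dividing a polynomial in a disjoint set of variables --- must be constants. Finally I would note that in the application of this lemma in the proof of Proposition \ref{P:LG1} the role of the algebraically independent family $\{t_i\}$ is played by the set $D \cup B$, which was arranged to be linearly independent over $\Q$ precisely so that, with $F = \Q(B) = \mathscr{P}_{\Theta_2}$ and the partition of $D$ into the sets $A_{\gamma_j}$, the lemma yields the bound (\ref{E:TD-1}).
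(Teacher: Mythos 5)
Your proof is correct. The paper itself states this as a ``simple lemma'' and gives no proof at all, so there is nothing to compare against; your induction on $s$ is the natural argument one would supply, and each ingredient is sound: the base field is algebraically closed in a purely transcendental extension (giving the case $s=1$ and the fact that $E'$ is algebraically closed in $E = E'(t_i : i \in I_s)$, since a transcendence-degree count shows the $t_i$, $i \in I_s$, stay algebraically independent over $E'$), and $E' \cap E_s = F$ (your unique-factorization sketch, or linear disjointness, both work), which together force $f_s$ to be transcendental over $E' \supseteq F(f_1, \ldots , f_{s-1})$. Your closing remark about how the lemma is applied with $F = \Q(B) = \mathscr{P}_{\Theta_2}$ and the partition of $D$ into the sets $A_{\gamma_j}$ also matches the paper's use of the lemma in the proof of Proposition \ref{P:LG1}.
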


\vskip2mm

Now if property $(C_i)$ holds for $i=1$ or $2$, then  Proposition
\ref{P:LG1} implies the following at once.

\begin{cor}\label{C:LG1}
Notations and assumptions are as in Proposition \ref{P:LG1}, assume that
condition $(C_i)$ holds for either $i=1$ or \:$2$. Then the
transcendence degree of $\mathscr{F}_1  \mathscr{F}_2$ over
$\mathscr{F}_{3-i}$  is infinite, i.e. condition $(T_i)$ (of the
introduction) holds.
\end{cor}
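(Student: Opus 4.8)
The plan is to obtain the corollary as an immediate consequence of Proposition \ref{P:LG1}, using property $(C_i)$ to manufacture, for every $m \geqslant 1$, a family of elements to which that proposition applies. Fix the index $i \in \{1 , 2\}$ for which $(C_i)$ holds, and set $d = \mathrm{tr.\:deg}_{\Q}\,\mathscr{K}$, which is finite since $\mathscr{K}$ was assumed to have finite transcendence degree over $\Q$ in Proposition \ref{P:LG1}. Given an arbitrary integer $m \geqslant 1$, the definition of property $(C_i)$ (Definition 3 of \S\ref{S:I}) supplies semi-simple elements $\gamma_1, \ldots , \gamma_m \in \Gamma_i$ of infinite order that are multiplicatively independent and are \emph{not} weakly contained in $\Gamma_{3-i}$. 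In particular these are nontrivial semi-simple elements of $\Gamma_i$, so all the hypotheses of Proposition \ref{P:LG1} are satisfied --- applied with the roles of $G_1, \Gamma_1$ and $G_2, \Gamma_2$ interchanged in case $i = 2$.

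Next I would invoke Proposition \ref{P:LG1} to conclude that
$$
\mathrm{tr.\:deg}_{\mathscr{F}_{3-i}}\,\mathscr{F}_{3-i}\bigl(\lambda_{\Gamma_i}(\gamma_1), \ldots , \lambda_{\Gamma_i}(\gamma_m)\bigr) \;\geqslant\; m - d .
$$
Since each $\lambda_{\Gamma_i}(\gamma_j)$ belongs to $\mathscr{F}_i$, and hence to the compositum $\mathscr{F}_1 \mathscr{F}_2$, the field on the left is a subfield of $\mathscr{F}_1 \mathscr{F}_2$ containing $\mathscr{F}_{3-i}$; therefore $\mathrm{tr.\:deg}_{\mathscr{F}_{3-i}}\,\mathscr{F}_1 \mathscr{F}_2 \geqslant m - d$. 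As $m$ is arbitrary while $d$ is a fixed finite number, this transcendence degree exceeds every integer, i.e. $\mathscr{F}_1 \mathscr{F}_2$ has infinite transcendence degree over $\mathscr{F}_{3-i}$, which is precisely condition $(T_i)$.

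There is really no obstacle to overcome at this stage: all the substantive work --- in particular the only appeal to Schanuel's conjecture --- is already packaged inside Proposition \ref{P:LG1}, and the construction of the elements $\gamma_1, \ldots , \gamma_m$ is exactly the content of property $(C_i)$ established in the earlier sections. The only points deserving a line of care are the routine index bookkeeping (applying Proposition \ref{P:LG1} in its symmetric form when $i = 2$) and the elementary observation that passing from $\mathscr{F}_{3-i}$ to $\mathscr{F}_{3-i}\bigl(\lambda_{\Gamma_i}(\gamma_1), \ldots , \lambda_{\Gamma_i}(\gamma_m)\bigr)$ keeps us inside $\mathscr{F}_1 \mathscr{F}_2$ and can only increase transcendence degree.
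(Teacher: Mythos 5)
Your argument is correct and is exactly the deduction the paper has in mind: the corollary is stated there as following "at once" from Proposition \ref{P:LG1}, and your use of $(C_i)$ to produce, for every $m$, multiplicatively independent elements not weakly contained in $\Gamma_{3-i}$, followed by the bound $m-d$ and the inclusion $\mathscr{F}_{3-i}(\lambda_{\Gamma_i}(\gamma_1),\ldots,\lambda_{\Gamma_i}(\gamma_m))\subset\mathscr{F}_1\mathscr{F}_2$, is precisely that intended argument (with the routine symmetric application when $i=2$).
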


%
%
%
%
%
%
%
%
%
%


Combining the corollary with Theorem \ref{T:F1}, we obtain Theorem 1
(of the introduction). This theorem has the following important
consequence. In \cite{PR6}, \S8, we had to single out the following
exceptional case

\vskip2mm

\noindent $(\mathcal{E})$ \parbox[t]{13cm}{One of the locally
symmetric spaces, say, $\fX_{\Gamma_1}$, is 2-dimensional and the
corresponding discrete subgroup $\Gamma_1$ cannot be conjugated into
$\mathrm{PGL}_2(K)$, for any number field $K \subset \R$, and the
other space, $\fX_{\Gamma_2}$, has dimension $> 2$,}

\vskip2mm

\noindent which was then excluded in some of our results. Theorem
1(1) shows that the locally symmetric spaces as in $(\mathcal{E})$
can {\it never} be length-commensurable (assuming Schanuel's
conjecture), and therefore all our results are in fact valid without
the exclusion of case $(\mathcal{E})$.

As we mentioned in the introduction, much more precise results are
available when the groups $\Gamma_1$ and $\Gamma_2$ are arithmetic.
In this section we will prove Theorems 2 and 3 that treat the case
where $G_1$ and $G_2$ are of the same Cartan-Killing type, and
postpone the proof of Theorem 4, where one of the group is of type
$B_n$ and the other is of type $C_n$ for some $n \geqslant 3$, until
the next section. In fact, Theorem 2 follows immediately from
Corollary \ref{C:LG1} and Theorem \ref{T:ArG2}. It should be noted
that while Theorem 2 asserts that conditions $(T_i)$ and $(N_i)$
hold for {\it at least one}  $i \in \{1 , 2\}$, these may not
hold for { both} $i$.

\vskip2mm

\addtocounter{thm}{1}

\noindent {\bf Example 7.4.} Let $D_1$ and $D_2$ be the quaternion
algebras over $\Q$ with the sets of ramified places $\{ 2 , 3 \}$
and $\{2, 3, 5, 7\}$, respectively. Set $G_i = \mathrm{PSL}_{1 ,
D_i}$, and let $\Gamma_i$ be a torsion-free subgroup of ${G_i}(\Q)$, for $i = 1, 2$. 
Over $\R$, both $G_1$ and
$G_2$ are isomorphic to $G = \mathrm{PSL}_2$, so $\Gamma_1$ and
$\Gamma_2$ can be viewed as arithmetic subgroups of $\mathcal{G} =
G(\R)$. The symmetric space $\fX$ associated with $\mathcal{G}$ is
the hyperbolic plane $\mathbb{H}^2$, so the corresponding locally
symmetric spaces $\fX_{\Gamma_1}$ and $\fX_{\Gamma_2}$ are
arithmetically defined hyperbolic 2-manifolds that are not
commensurable as the groups $G_1$ and $G_2$ are not
$\Q$-isomorphic. At the same time, our choice of $D_1$ and $D_2$
implies that every maximal subfield of $D_2$ is isomorphic to a
maximal subfield of $D_1$ which entails that $\Q \cdot
L(\fX_{\Gamma_2}) \subset \Q \cdot L(\fX_{\Gamma_1})$, hence $\cF_2
\subset \cF_1$. Thus, $\cF_1\cF_2 = \cF_1$, so $(T_1)$ does not hold 
(although $(T_2)$ does hold).
\vskip2mm

Next, we will derive Theorem 3 from Theorem \ref{T:ADE6}. Let
$\Gamma_i$ be $(\cG_i, K_i)$-arithmetic. Assume that $(T_i)$, hence
$(C_i)$, does not hold for either $i = 1$ or \,$2$. Then by
Theorem \ref{T:ADE6} we necessarily have $K_1 = K_2 =: K$, and the groups $\cG_1$, $\cG_2$ have equivalent systems of maximal $K$-tori. By
the assumption made in Theorem 3, $K \neq \Q$. The field $K$ has the
real place associated with the identity embedding $K \hookrightarrow
\R$ but since $K \neq \Q$, it necessarily has another archimedean
place $v_0$, and the discreteness of $\Gamma_i$ implies that $\cG_i$
is $K_{v_0}$-anisotropic. Thus, Theorem \ref{T:ADE6} applies to the
effect that the groups $\cG_1$ and $\cG_2$ have equivalent systems
of maximal $K$-tori. Then the fact that $\Q \cdot L(\fX_{\Gamma_1})
= \Q \cdot L(\fX_{\Gamma_2})$ follows from the following.
\begin{prop}\label{P:length-commen}
{\rm (cf.\,\cite{PR6}, Proposition 9.14)} Let $G_1$ and $G_2$ be
connected absolutely simple algebraic groups such that
$\mathcal{G}_i = G_i(\R)$ is noncompact for both $i = 1, 2$, and let
$\fX_i$ be the symmetric space associated with $\mathcal{G}_i$.
Furthermore, let $\Gamma_i \subset \mathcal{G}_i$ be a discrete
torsion-free $(\cG_i , K)$-arithmetic subgroup (where $K \subset \R$
is a number field), and $\fX_{\Gamma_i} = \fX/\Gamma_i$ be the
corresponding locally symmetric space for $i = 1, 2$. If $\cG_1$ and
$\cG_2$ have equivalent systems of maximal $K$-tori, then
$\fX_{\Gamma_1}$ and $\fX_{\Gamma_2}$ are length-commensurable.
\end{prop}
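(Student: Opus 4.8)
The plan is to reduce everything to the formula (\ref{E:LG-1}) recalled at the start of this section, according to which $\Q\cdot L(\fX_{\Gamma_i})$ is spanned over $\Q$ by the numbers $\lambda_{\Gamma_i}(\gamma)$ with $\gamma\in\Gamma_i$ nontrivial semi-simple, and then to transport closed geodesics from $\fX_{\Gamma_1}$ to $\fX_{\Gamma_2}$ by means of the isomorphisms furnished by the hypothesis. Since neither the $\Q$-span of $L(\fX_{\Gamma_i})$ nor the property of $\cG_1$ and $\cG_2$ having equivalent systems of maximal $K$-tori is affected by replacing the $\Gamma_i$ by commensurable subgroups (the latter because it is a property of $\cG_1,\cG_2$ alone) or by replacing $\cG_i$ by a $K$-isomorphic group, I would first normalize the setup so that, with respect to fixed $\mathrm{GL}_n$-realizations, $\Gamma_i$ is identified with $\cG_i(\cO_K)$ inside $\cG_i(\R)=\mathcal{G}_i$; here one uses that an $\R$-automorphism of $\cG_i$ preserves the Killing form and hence induces an isometry of $\fX_i$, so conjugation by the automorphism appearing in the definition of a $(\cG_i,K)$-arithmetic subgroup is harmless. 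It then suffices to prove $\Q\cdot L(\fX_{\Gamma_1})\subseteq\Q\cdot L(\fX_{\Gamma_2})$, the opposite inclusion following by symmetry, and for this it is enough to show that for each nontrivial semi-simple $\gamma_1\in\Gamma_1$ there is a nontrivial semi-simple $\gamma_2\in\Gamma_2$ with $\lambda_{\Gamma_1}(\gamma_1)\in\Q\cdot\lambda_{\Gamma_2}(\gamma_2)$.

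Fix such a $\gamma_1$; it has infinite order since $\Gamma_1$ is torsion-free, and it lies in some maximal $K$-torus $T_1$ of $\cG_1$ (for instance in a maximal $K$-torus of the reductive group $Z_{\cG_1}(\gamma_1)^{\circ}$, in whose center $\gamma_1$ lies). Applying the definition of equivalent systems of maximal $K$-tori to $T_1$, I obtain a $\overline{K}$-isomorphism $\varphi\colon\cG_1\to\cG_2$ whose restriction $\psi:=\varphi\vert_{T_1}$ is a $K$-isomorphism onto the maximal $K$-torus $T_2:=\varphi(T_1)$ of $\cG_2$; as observed immediately after that definition, the induced map on character groups restricts to a bijection $\psi^{*}\colon\Phi(\cG_2,T_2)\to\Phi(\cG_1,T_1)$. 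Now $\Gamma_1\cap T_1(K)=T_1(\cO_K)$, so the infinite cyclic group $\langle\gamma_1\rangle$ meets $T_1(\cO_K)$ in a finite-index subgroup, whence $\gamma_1^{a}\in T_1(\cO_K)$ for some integer $a\geqslant 1$; since a $K$-isomorphism of $K$-tori carries an arithmetic subgroup into a commensurable one, $\psi(T_1(\cO_K))$ is commensurable with $T_2(\cO_K)=\Gamma_2\cap T_2(K)$, so $\gamma_2:=\psi(\gamma_1^{ab})=\psi(\gamma_1^{a})^{b}$ lies in $\Gamma_2$ for a suitable integer $b\geqslant 1$. This $\gamma_2$ is semi-simple, being a $K$-point of the torus $T_2$, and nontrivial because $\psi$ is injective and $\gamma_1$ has infinite order.

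It remains to compare the two length invariants. Fix an embedding into $\C$, extending the given real embedding of the number field $K$, of an algebraic closure of the common splitting field $K_{T_1}=K_{T_2}$. Since $\psi$ is defined over $K$, one has $\beta\circ\psi=\psi^{*}\beta$ as morphisms over $\overline{K}$ for every $\beta\in\Phi(\cG_2,T_2)$, hence $\vert\beta(\psi(\delta))\vert=\vert(\psi^{*}\beta)(\delta)\vert$ for all $\delta\in T_1(K)$; note also that both instances of (\ref{E:LG-1}) carry the same (Killing-form) normalization, so no group-dependent constant intervenes. Using this together with $\log\vert\alpha(\gamma_1^{ab})\vert=ab\cdot\log\vert\alpha(\gamma_1)\vert$ and the bijection $\psi^{*}$, I get
$$
\lambda_{\Gamma_2}(\gamma_2)^{2}=\sum_{\beta\in\Phi(\cG_2,T_2)}\bigl(\log\vert\beta(\psi(\gamma_1^{ab}))\vert\bigr)^{2}=\sum_{\alpha\in\Phi(\cG_1,T_1)}\bigl(\log\vert\alpha(\gamma_1^{ab})\vert\bigr)^{2}=(ab)^{2}\,\lambda_{\Gamma_1}(\gamma_1)^{2},
$$
whence $\lambda_{\Gamma_1}(\gamma_1)=(ab)^{-1}\lambda_{\Gamma_2}(\gamma_2)\in\Q\cdot\lambda_{\Gamma_2}(\gamma_2)$. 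Interchanging the roles of $\cG_1$ and $\cG_2$ yields the reverse inclusion, so $\fX_{\Gamma_1}$ and $\fX_{\Gamma_2}$ are length-commensurable. I expect the only delicate point to be the normalization step of the first paragraph, together with the passages $\Gamma_i\leftrightarrow\cG_i(\cO_K)\leftrightarrow T_i(\cO_K)$: one must check that the isogeny $\pi$ and the automorphism in the definition of a $(\cG_i,K)$-arithmetic subgroup are absorbed correctly, and that $\psi$ — a priori defined only on $T_1$ — does transport $T_1(\cO_K)$ to a group commensurable with $T_2(\cO_K)$; granting this, the remainder is a direct manipulation of the formula for $\lambda_{\Gamma_i}$.
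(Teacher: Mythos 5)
Your argument is correct and is essentially the proof the paper has in mind: the paper simply invokes Proposition 9.14 of \cite{PR6}, whose argument is exactly your transport of a semisimple $\gamma_1$ through the $K$-isomorphism of maximal $K$-tori supplied by the equivalence of systems of tori, passage to a suitable power to land in the arithmetic subgroup, and comparison of the values $\lambda_{\Gamma_i}$ via the induced bijection $\Phi(\cG_2,T_2)\to\Phi(\cG_1,T_1)$ noted right after the definition in \S\ref{S:ADE}. The commensurability and normalization points you flag are the routine ones (they only rescale $\lambda$ by rationals), so no gap remains.
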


This is essentially Proposition 9.14 of \cite{PR6} except that here
we require that the groups $\cG_1$ and $\cG_2$ have equivalent
systems of maximal $K$-tori instead of the more technical
requirement of having {\it coherently} equivalent systems of maximal
$K$-tori used in \cite{PR6}; this change however does not affect the
proof.

\vskip2mm

The analysis of our argument in conjunction with Proposition
\ref{P:An} and Corollary \ref{C:Ap} shows that the assertion of
Theorem 3 remains valid without the assumption that $K_{\Gamma_i}
\neq \Q$ at least in the following situations where $\cG_1$ and
$\cG_2$ are of type $A_n$: (1) one of the $\cG_i$'s is an inner
form; (2) one of the $\cG_i$'s is represented by $\mathrm{SU}(D ,
\tau)$ where $D$ is a central {\it division} algebra over $L$ with
an involution $\tau$ of the second kind that restricts to the
nontrivial automorphism of $L/K$; (3) $n = p-1$, where $p$ is a
prime.

\vskip2mm

To illustrate our general results in a concrete geometric situation,
we will now prove Corollary 1 of the introduction. The hyperbolic
$d$-space $\mathbb{H}^d$ is the symmetric space of the group $G(d) =
{\mathrm{PSO}}(d , 1)$. For $d \geqslant 2$, set $\displaystyle \ell =
\left[\frac{d + 1}{2} \right]$. Then for $d \neq 3$, $G(d)$ is an
absolutely simple group of type $B_{\ell}$ if $d$ is even, and of
type $D_{\ell}$ if $d$ is odd. Furthermore, the order $w(d)$ of the
Weyl group of $G(d)$ is given by:
$$
w(d) = \left\{ \begin{array}{ccl} 2^{\ell} \cdot\ell! & , & d \ \
\ \text{is even}, \\ 2^{\ell-1} \cdot\ell! & , & d \ \ \ \text{is
odd.}
\end{array} \right.
$$
One easily checks that $w(d) < w(d + 1)$ for any $d \geqslant 2$,
implying that $w(d_1) > w(d_2)$ whenever $d_1 > d_2$. With these
remarks, assertions $(i)$ and $(ii)$ follow from Theorem 1.
Furthermore, using the above description of the Killing-Cartan type
of $G(d)$ one easily derives assertions $(iii)$ and $(iv)$ from Theorems
2 and 3, respectively.

\vskip2mm

It follows from (\cite{PlR}, Theorem 5.7) that given a discrete
torsion-free $(\cG_i , K_i)$-arithmetic subgroup of $\mathcal{G}_i$,
the compactness of the locally symmetric space $\fX_{\Gamma_i}$ is
equivalent to the fact that $\cG_i$ is $K_i$-anisotropic. Combining
this with Theorem \ref{T:isotr}, we obtain Theorem 5.


\vskip3mm

Generalizing the notion of length-commensurability, one can define
two Riemannian manifolds $M_1$ and $M_2$ to be ``length-similar" if
there exists a real number $\lambda > 0$ such that $$\Q \cdot L(M_2)
= \lambda \cdot \Q \cdot L(M_1).$$ One can show, however, that for
arithmetically defined locally symmetric space, in most cases, this
notion is redundant, viz. it coincides with the notion of length
commensurability.
\begin{cor}\label{C:simil}
Let $\Gamma_i \subset G_i(\R)$ be a finitely generated Zariski-dense
torsion-free subgroup.
Assume that there exists $\lambda \in \R_{> 0}$ such that
\begin{equation}\label{E:length-sim}
\Q \cdot L(\fX_{\Gamma_1}) = \lambda \cdot \Q \cdot
L(\fX_{\Gamma_2}).
\end{equation}
Then

\vskip2mm

\noindent ${\mathrm{(i)}}$ \parbox[t]{13cm}{$w_1 = w_2$ (hence either $G_1$ and
$G_2$ are of the same type, or one of them is of type $B_n$ and the
other of type $C_n$ for some $n \geqslant 3$) and $K_{\Gamma_1} =
K_{\Gamma_2} =: K$.}

\vskip3mm

\noindent Assume now that ${\Gamma_1}$ and ${\Gamma_2}$ are
arithmetic. Then

\vskip2mm

\noindent  ${\mathrm{(ii)}}$ \parbox[t]{13cm}{$\mathrm{rk}_{\R}\: G_1 =
\mathrm{rk}_{\R}\: G_2,$ and either $G_1 \simeq G_2$ over
$\R,$ or one of the groups is of type $B_n$ and the other is of type
$C_n$;}

\vskip2mm

\noindent ${\mathrm{(iii)}}$ \parbox[t]{13cm}{if $\Gamma_i$ is $(\mathcal{G}_i ,
K)$-arithmetic then $\mathrm{rk}_{K}\: \mathcal{G}_1 =
\mathrm{rk}_K\: \mathcal{G}_2$, and consequently, if one of the
spaces is compact, the other must also be compact;}

\vskip2mm

\noindent ${\mathrm{(iv)}}$ \parbox[t]{13cm}{if $G_1$ and $G_2$ are of the same
type which is different from $A_n,$ $D_{2n+1}$ $(n > 1)$ or $E_6$
then $\fX_{\Gamma_1}$ and $\fX_{\Gamma_2}$ are commensurable, hence
length-commensurable;}

\vskip2mm

\noindent  ${\mathrm{(v)}}$ \parbox[t]{13cm}{if $G_1$ and $G_2$ are of the same
type which is one of the following: $A_n,$ $D_{2n+1}$ $(n > 1)$ or
$E_6,$ then provided that $K_{\Gamma_i} \neq \Q$ for at least one $i
\in \{1 , 2\},$ the spaces $\fX_{\Gamma_1}$ and $\fX_{\Gamma_2}$ are
length-commensurable (although not necessarily commensurable).}
\end{cor}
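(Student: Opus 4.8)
The plan is to observe that the length-similarity hypothesis (\ref{E:length-sim}) rules out both of the conditions $(N_1)$ and $(N_2)$, hence forces $(C_1)$ and $(C_2)$ to fail, and then to read off (i)--(v) from the algebraic results of \S\S \ref{S:FDef}--\ref{S:ADE}. For the first step: from $\Q\cdot L(\fX_{\Gamma_1})=\lambda\cdot\Q\cdot L(\fX_{\Gamma_2})$ we get $L(\fX_{\Gamma_1})\subset\{\lambda\}\cdot\Q\cdot L(\fX_{\Gamma_2})$ and, after multiplying by $\lambda^{-1}$, $L(\fX_{\Gamma_2})\subset\{\lambda^{-1}\}\cdot\Q\cdot L(\fX_{\Gamma_1})$, so $(N_1)$ and $(N_2)$ fail; since $(T_i)$ implies $(N_i)$, conditions $(T_1)$ and $(T_2)$ fail, whence by Corollary \ref{C:LG1} (this is the sole place where Schanuel's conjecture is used) $(C_1)$ and $(C_2)$ both fail. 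This observation is the only genuinely new input; everything else is bookkeeping with results already proved.

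\emph{Part (i).} Apply Theorem \ref{T:F1}: were $w_1\neq w_2$, one of $w_1>w_2$, $w_2>w_1$ would give $(C_1)$ or $(C_2)$; and if $w_1=w_2$ the failure of $(C_1)$ and $(C_2)$ forces $K_{\Gamma_1}\subset K_{\Gamma_2}$ and $K_{\Gamma_2}\subset K_{\Gamma_1}$. Hence $w_1=w_2$ and $K_{\Gamma_1}=K_{\Gamma_2}=:K$; the dichotomy on Killing--Cartan types is the standard consequence of $w_1=w_2$ recalled after Theorem 1 in \S\ref{S:I}.

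\emph{Parts (ii)--(iii).} Now suppose $\Gamma_i$ is $(\cG_i,K_i,S_i)$-arithmetic; then $K_i=K_{\Gamma_i}$, so $K_1=K_2=K$ by (i), and $G_i$ is $\R$-isomorphic to the group obtained from $\cG_i$ by $K\hookrightarrow\R$. Feeding the failure of $(C_1)$, $(C_2)$ into Theorem \ref{T:ArG1} we obtain in addition $\mathrm{rk}_{K_v}\cG_1=\mathrm{rk}_{K_v}\cG_2$ for all $v\in V^K$, $L_1=L_2$ and $S_1=S_2=:S$; evaluating the rank identity at the archimedean place attached to $K\hookrightarrow\R$ gives $\mathrm{rk}_\R G_1=\mathrm{rk}_\R G_2$. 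Having $S_1=S_2$, I invoke Theorem \ref{T:ArG3}: it yields $\mathrm{rk}_K\cG_1=\mathrm{rk}_K\cG_2$ (which is (iii), since, as recalled before the proof of Theorem 5, for discrete torsion-free $(\cG_i,K)$-arithmetic $\Gamma_i$ the space $\fX_{\Gamma_i}$ is compact iff $\cG_i$ is $K$-anisotropic, i.e. iff $\mathrm{rk}_K\cG_i=0$, by \cite{PlR}, Theorem 5.7, so compactness of one space forces compactness of the other), and, when $G_1$ and $G_2$ are of the same Killing--Cartan type, an isomorphism of the Tits indices $\cG_i/K_v$ for every $v$. Applying this at the real place and using that a connected absolutely simple real algebraic group is determined up to $\R$-isomorphism by its Killing--Cartan type and its Tits index (classification of real forms), we conclude $G_1\simeq G_2$ over $\R$; if the types differ, (i) leaves only the $B_n$/$C_n$ alternative, completing (ii).

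\emph{Parts (iv)--(v), and the main obstacle.} For (iv) the common type is not $A_n$, $D_{2n+1}$ $(n>1)$ or $E_6$, so Theorem \ref{T:ArG2} applies: the failure of $(C_1)$, $(C_2)$ means $\Gamma_1$ and $\Gamma_2$ are commensurable up to an $\R$-isomorphism between $G_1$ and $G_2$, hence $\fX_{\Gamma_1}$ and $\fX_{\Gamma_2}$ are commensurable and a fortiori length-commensurable. For (v) the common type is $A_n$, $D_{2n+1}$ $(n>1)$ or $E_6$, and the hypothesis $K_{\Gamma_i}\neq\Q$ for some $i$ gives $K\neq\Q$; then $K$ has an archimedean place $v_0$ different from the one fixed by $K\hookrightarrow\R$, and, exactly as in the derivation of Theorem 3 from Theorem \ref{T:ADE6} earlier in this section, discreteness of $\Gamma_i$ forces $\cG_i$ to be $K_{v_0}$-anisotropic. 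Thus the anisotropy hypothesis of Theorem \ref{T:ADE6} holds, and since $(C_1)$ and $(C_2)$ fail that theorem yields that $\cG_1$ and $\cG_2$ have equivalent systems of maximal $K$-tori; Proposition \ref{P:length-commen} then gives that $\fX_{\Gamma_1}$ and $\fX_{\Gamma_2}$ are length-commensurable. I do not expect a serious obstacle, since the whole statement is an assembly of earlier results; the only points needing a little care are the use of the classification of real forms in (ii) and the remark, used in (v), that an $S$-arithmetic subgroup of $G_i(\R)$ can be discrete only if $\cG_i$ is anisotropic at every archimedean place of $K$ other than the fixed one.
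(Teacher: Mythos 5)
Your proposal is correct and follows essentially the same route as the paper: deduce from (\ref{E:length-sim}) that $(N_1)$ and $(N_2)$ fail, hence (via Corollary \ref{C:LG1}) that $(C_1)$ and $(C_2)$ fail, and then read off (i)--(v) from Theorem \ref{T:F1}, Theorems \ref{T:ArG1} and \ref{T:ArG3} (with the compactness criterion as in the proof of Theorem 5), and the results behind Theorems 2 and 3. The only cosmetic difference is that the paper cites the geometric Theorems 1--3 directly where you unwind their proofs into the algebraic statements (\ref{T:ArG2}, \ref{T:ADE6}, Proposition \ref{P:length-commen}), which is the same argument in expanded form.
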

\begin{proof}
If (\ref{E:length-sim}) holds then obviously $(N_i)$ cannot possibly
hold for either $i = 1$ or $2.$ So, assertion (i) immediately follows from
Theorem 1. Now, if $\Gamma_i$ is $(\cG_i , K)$-arithmetic, then neither 
$(N_1)$ nor $(N_2)$ holds, so neither $(C_1)$ nor $(C_2)$ can hold 
(cf. Corollary \ref{C:LG1}). So by Theorem \ref{T:ArG1} we
have $\mathrm{rk}_{K_v}\: \cG_1 = \mathrm{rk}_{K_v}\: \cG_2$ for all
$v \in V^K$; in particular, $\mathrm{rk}_{\R}\: G_1 =
\mathrm{rk}_{\R}\: G_2$. Moreover, if $G_1$ and $G_2$ are of the same
type then by Theorem \ref{T:ArG3}, the Tits indices over $\R$ of
$G_1$ and $G_2$ are isomorphic, and therefore $G_1 \simeq G_2$, so
assertion (ii) follows. Regarding (iii), the fact that $\mathrm{rk}_K\:
\cG_1 = \mathrm{rk}_K\: \cG_2$ is again a consequence of Theorem
\ref{T:ArG3} in conjunction with Corollary \ref{C:LG1}; to relate
this to the compactness of the corresponding locally symmetric
spaces one argues as in the proof of Theorem 5 above. Finally, assertions
(iv) and (v) follow from Theorems 2 and 3 respectively.
\end{proof}

We note that assertions (iv) and (v) of the above  corollary assert that if
two arithmetically defined locally symmetric spaces of the same
group are not length-commensurable then, under certain assumption,
one cannot make them length-commensurable by scaling the metric on
one of them (cf., however, Theorem 4).


%
%
%

\vskip5mm

\section{Groups of types $B_n$ and $C_n$}

The goal of this section is to prove Theorem 4. Our argument will
heavily rely on the results of \cite{GR}. Here is one of the main
results.
\begin{thm}\label{T:BC1}
{\rm (\cite{GR}, Theorem 1.1)} Let $G_1$ and $G_2$ be connected absolutely
simple adjoint groups of types $B_n$ and $C_n$ $(n \geqslant 3)$
respectively over a field $F$ of characteristic zero, and let
$\Gamma_i$ be a Zariski-dense $(\cG_i, K, S)$-arithmetic subgroup.
Then $\Gamma_1$ and $\Gamma_2$ are weakly commensurable if and only
if  the following conditions hold:

\vskip2mm

\noindent {\rm (1)} \parbox[t]{13cm}{$\mathrm{rk}_{K_v}\,\cG_1 =
\mathrm{rk}_{K_v}\,\cG_2 = n$ (in other words, $\cG_1$ and $\cG_2$
are split over $K_v$) for all nonarchimedean $v \in V^K$, and}

\vskip1mm

\noindent {\rm (2)} \parbox[t]{13cm}{$\mathrm{rk}_{K_v}\,\cG_1 =
\mathrm{rk}_{K_v}\,\cG_2 = 0$ or $n$ (i.e., both $\cG_1$ and
$\cG_2$ are either anisotropic or split) for every archimedean $v \in
V^K$.}
\end{thm}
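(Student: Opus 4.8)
The plan is to follow \cite{GR} and reduce the assertion to a local statement about maximal $K$-tori. Since $G_1$ and $G_2$ are of types $B_n$ and $C_n$, their Weyl groups have the same order $w_1 = w_2 = 2^n\,n!$, so the techniques of \S\ref{S:WC}--\S\ref{S:Ex} apply. First I would translate weak commensurability of $\Gamma_1$ and $\Gamma_2$ into a correspondence between their maximal $K$-tori: using Theorem \ref{T:Ex2} together with Dirichlet's unit theorem (exactly as in the proof of Theorem \ref{T:ArG1}), a maximal $K$-torus $T_1$ of $\cG_1$ with sufficiently generic local behaviour contains an element $\gamma_1 \in \Gamma_1$ of infinite order generating a Zariski-dense subgroup of $T_1$; if $\gamma_1$ is weakly commensurable with some $\gamma_2 \in \Gamma_2$, then by Theorem \ref{T:10} there is a maximal $K$-torus $T_2$ of $\cG_2$ admitting a $K$-isogeny $T_2 \to T_1$, and symmetrically. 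Thus $\Gamma_1$ and $\Gamma_2$ are weakly commensurable if and only if every such maximal $K$-torus of either group is $K$-isogenous to a maximal $K$-torus of the other.

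Next I would make the maximal $K$-tori explicit. A maximal $K$-torus of $\cG_1$ (type $B_n$, so $\cG_1 = \mathrm{PSO}(q)$ for a $(2n+1)$-dimensional quadratic form $q$ over $K$) is governed by a $2n$-dimensional \'etale $K$-algebra $E$ with an involution whose fixed subalgebra has dimension $n$, subject to the requirement that the associated quadratic transfer form be a subform of $q$; a maximal $K$-torus of $\cG_2$ (type $C_n$) is governed by the same combinatorial datum together with an embedding of the \'etale algebra with involution into $(M_n(D),\mathrm{ad}_h)$, where $D$ and $h$ are the quaternion algebra and skew-Hermitian form attached to $\cG_2$. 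Matching a $B_n$-torus with a $C_n$-torus through the isogeny of Theorem \ref{T:10} then amounts, after unwinding the standard combinatorial identification of the maximal tori of the relevant $B_n$- and $C_n$-groups, to the existence of the required embedding of \'etale algebras with involution.

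The key arithmetic input is a Hasse principle for such embeddings (the results of \cite{PR1}, Appendix, and \cite{PR7}): the \'etale algebra with involution embeds into $(M_n(D),\mathrm{ad}_h)$ over $K$ precisely when it does so over $K_v$ for every $v \in V^K$. This reduces the global matching condition to local ones, which I would analyse place by place. At a nonarchimedean $v$: every $2n$-dimensional \'etale $K_v$-algebra with involution realisable from $\cG_1$ embeds locally if and only if $D$ splits at $v$, i.e.\ $\mathrm{rk}_{K_v}\cG_2 = n$; and if $D$ is ramified at $v$, one uses Theorem \ref{T:Ex0} to produce a maximal $K$-torus of $\cG_1$ whose local algebra has no embedding into $\cG_2$, forcing condition (1). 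At an archimedean $v$: a direct computation with the real and complex forms ($\mathrm{SO}(q_v)$ versus $\mathrm{Sp}(n)$ or the split groups) shows that matching local tori exist exactly when $\cG_1$ and $\cG_2$ are simultaneously split or simultaneously anisotropic over $K_v$, which is condition (2); once more, Theorem \ref{T:Ex0} supplies an obstructing torus in $\cG_1$ whenever this fails, giving the ``only if'' direction.

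I expect the main obstacle to be the third step: proving and correctly invoking the local--global principle for embeddings of \'etale algebras with involution into simple algebras with involution of types $B_n$ and $C_n$, and then carrying out the delicate local comparison --- especially at the finitely many nonarchimedean places where $\cG_2$ may be ramified and at the archimedean places --- to pin down precisely which local tori of a $B_n$-group can and cannot be realised inside a $C_n$-group. The remaining steps are essentially bookkeeping built on the torus-existence results of \S\ref{S:Ex} and the adaptation of the Isogeny Theorem in \S\ref{S:WC}.
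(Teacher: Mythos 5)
This theorem is not proved in the paper at all: it is quoted verbatim from \cite{GR}, Theorem 1.1, and the only indication here of how it is established is the discussion in \S 8, where a maximal $K$-torus of $\cG_1$ is encoded by a $(2n+1)$-dimensional \'etale algebra with involution $(E_1,\sigma_1)=(\widetilde{E}_1,\tilde{\sigma}_1)\oplus(K,\mathrm{id}_K)$, a torus of $\widetilde{\cG}_2$ by the $2n$-dimensional algebra $(E_2,\sigma_2):=(\widetilde{E}_1,\tilde{\sigma}_1)$, and the existence of the relevant embeddings is settled by Theorem 7.3 of \cite{PR7}. Your outline is consistent with that route (reduction to matching of maximal $K$-tori via Theorem \ref{T:Ex2}, Dirichlet's theorem and Theorem \ref{T:10}; \'etale algebras with involution; local--global principle from \cite{PR1}/\cite{PR7}; place-by-place analysis), so it should be read as a plan for re-deriving the cited result, not as a proof that can be checked against anything in this paper; within the paper the ``proof'' is the citation.

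As a proof sketch it also has concrete gaps. First, the biconditional in your opening reduction is misstated: weak commensurability is \emph{not} equivalent to the generic tori produced by Theorem \ref{T:Ex2} being $K$-isogenous to tori of the other group. The ``only if'' direction does use such generic tori (this is exactly the mechanism of Propositions \ref{P:BC2} and \ref{P:BC3}), but the ``if'' direction must handle \emph{every} maximal $K$-torus meeting $\Gamma_1$ in an element of infinite order, and must then actually transfer elements: in \S 8 this is done by producing a $K$-\emph{isomorphism} $\varphi\colon T_1\to T_2$ from the algebra correspondence, setting $\gamma_2=\varphi(\gamma_1)$, and passing to a power $\gamma_2^{n_2}\in\Gamma_2$; with only an abstract $K$-isogeny, lifting an element of $T_1(K)\cap\Gamma_1$ to a suitable element of $\Gamma_2$ is not automatic. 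Second, your local analysis attributes the obstruction solely to ramification of the quaternion algebra underlying $\cG_2$; but condition (1) forces $\cG_1$ to be split at every finite place as well, and a $(2n+1)$-dimensional quadratic form over $K_v$ can have a $3$-dimensional anisotropic kernel, so the failure can equally come from $\cG_1$; similarly, at real places one must rule out all intermediate ranks $0<\mathrm{rk}_{K_v}<n$ for either group, not just compare ``split versus anisotropic''. Finally, the two inputs you identify as the main obstacle --- the local--global principle for embeddings of \'etale algebras with involution and the delicate local case analysis --- are precisely the content of \cite{PR7} and \cite{GR}, so deferring them leaves the substance of the theorem unproved.
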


Furthermore, it has been shown in \cite{GR} that the same two conditions
precisely characterize the situations where $\cG_1$ and $\cG_2$ have
the same isogeny classes of maximal $K$-tori, or, equivalently, 
$\cG_1$ and $\widetilde{\cG}_2$ (the universal cover of $\cG_2$) have the same
isomorphism classes of maximal $K$-tori. We need the following
proposition which has actually been  established in the course of the proof
of Theorem \ref{T:BC1} in \cite{GR}.

\begin{prop}\label{P:BC2}
Notations and conventions be as in Theorem \ref{T:BC1}. Assume that $v_0 \in
V^K$ is such that the corresponding condition (1) or (2) fails. Then 
for at least one $i \in \{1 , 2\}$ there exists a
$K_{v_0}$-isotropic maximal torus $T_i(v_0)$ of $\cG_i$ such that no 
maximal $K$-torus $T_i$ of $\cG_i$ satisfying

\vskip2mm

\noindent \ $(i)$ $\theta_{T_i}(\Ga(K_{T_i}/K)) = W(\cG_i , T_i)$,

\vskip1mm

\noindent $(ii)$ $T_i$ is conjugate to $T_i(v_0)$ by an element of
$\cG_i(K_{v_0})$

\vskip2mm

\noindent is $K$-isogeneous to a maximal
$K$-torus of $\cG_{3-i}$.
\end{prop}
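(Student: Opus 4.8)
The plan is to extract this from the proof of Theorem~\ref{T:BC1} in \cite{GR}, supplying the elementary rank comparison that disposes of the generic situation and quoting \cite{GR} for the delicate archimedean case. The basic mechanism is the observation that a $K$-isogeny of $K$-tori $T_{3-i} \to T_i$ induces, for every place $v$, an isomorphism of $\Ga(\overline{K}/K)$-modules $X(T_i) \otimes_{\Z} \Q \to X(T_{3-i}) \otimes_{\Z} \Q$ over $K_v$; hence it preserves $\mathrm{rk}_{K_v}$ and, for archimedean $v$, the isomorphism class of the torus over $K_v$. Therefore it suffices to produce, for one $i$, a $K_{v_0}$-isotropic maximal $K_{v_0}$-torus $T_i(v_0)$ of $\cG_i$ whose $K_{v_0}$-isomorphism class (equivalently, that of any $\cG_i(K_{v_0})$-conjugate of it) is not realized by any maximal $K_{v_0}$-torus of $\cG_{3-i}$: for then no global $T_i$ as in $(ii)$ can be $K$-isogenous to a maximal $K$-torus of $\cG_{3-i}$. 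In the arguments below, condition $(i)$ will play a role only in the archimedean equal-rank case.

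Suppose first that $v_0$ is nonarchimedean, so that condition $(1)$ fails, i.e.\ not both $\cG_1$ and $\cG_2$ are $K_{v_0}$-split. Over the nonarchimedean local field $K_{v_0}$, a group of type $B_n$ is $\mathrm{Spin}(q)$ for a $(2n+1)$-dimensional quadratic form $q$ whose anisotropic kernel has odd dimension $\leqslant 4$, hence $1$ or $3$; thus its $K_{v_0}$-rank equals $n$ or $n-1$. A non-split group of type $C_n$ is of the form $\mathrm{SU}(D , h)$ for the quaternion division algebra $D$ over $K_{v_0}$ and a hermitian form $h$ on $D^n$, and since a split (hyperbolic) hermitian space over $D$ has even $D$-dimension, its $K_{v_0}$-rank is $\leqslant \lfloor n/2 \rfloor < n-1$ (as $n \geqslant 3$). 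Comparing these values, the failure of $(1)$ forces $\mathrm{rk}_{K_{v_0}}\,\cG_1 \neq \mathrm{rk}_{K_{v_0}}\,\cG_2$. Let $i$ be the index for which $\mathrm{rk}_{K_{v_0}}\,\cG_i$ is the larger, and let $T_i(v_0)$ be a maximal $K_{v_0}$-torus of $\cG_i$ of maximal $K_{v_0}$-rank (it is $K_{v_0}$-isotropic since that rank exceeds $\mathrm{rk}_{K_{v_0}}\,\cG_{3-i} \geqslant 0$). Any $\cG_i(K_{v_0})$-conjugate $T_i$ then has $\mathrm{rk}_{K_{v_0}}\,T_i = \mathrm{rk}_{K_{v_0}}\,\cG_i > \mathrm{rk}_{K_{v_0}}\,\cG_{3-i}$, which exceeds the $K_{v_0}$-rank of every maximal $K$-torus of $\cG_{3-i}$; so no $K$-isogeny as above can exist.

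Now suppose $v_0$ is archimedean; since $(2)$ holds automatically when $K_{v_0} = \C$, we may take $K_{v_0} = \R$. If $\mathrm{rk}_{\R}\,\cG_1 \neq \mathrm{rk}_{\R}\,\cG_2$, the argument of the previous paragraph applies verbatim. Otherwise $\mathrm{rk}_{\R}\,\cG_1 = \mathrm{rk}_{\R}\,\cG_2 = r$, and the failure of $(2)$ means $0 < r < n$. Here I would invoke the analysis in \cite{GR}: for the real forms of type $B_n$ and of type $C_n$ having equal real rank $r$, the lists of isomorphism classes of maximal $\R$-tori — encoded by the conjugacy class, in the common Weyl group $W = (\Z/2)^n \rtimes S_n$, of the action of complex conjugation, together with the $\R$-rank constraints — do not coincide; one of the two forms, say $\cG_i$, carries a $K_{v_0}$-isotropic maximal $\R$-torus (one with $\theta$-image as large as is compatible with being realized in $\cG_i$, which is where condition $(i)$ enters) whose $\R$-isomorphism class is not realized by any maximal $\R$-torus of $\cG_{3-i}$. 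Taking this torus to be $T_i(v_0)$ and invoking the first paragraph completes the argument.

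The routine part is the nonarchimedean case (and the unequal-rank archimedean case), which is just a rank count. The substance of the proposition lies in the archimedean equal-rank case: since type $B_n$ and type $C_n$ share the same Weyl group, the distinction between their real forms is genuinely subtle, and the required input is the detailed determination, carried out in \cite{GR}, of which involutions of $W$ occur as the conjugation action on a maximal $\R$-torus of each specific real form, along with the attendant rank restrictions. I would simply quote the relevant intermediate step of the proof of Theorem~\ref{T:BC1} in \cite{GR}.
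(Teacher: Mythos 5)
Your rank count disposes of the nonarchimedean case and of the archimedean case with unequal real ranks, and that part is correct (it is in fact more than the paper itself supplies: the paper gives no proof of Proposition \ref{P:BC2} at all, merely recording that it is established inside the proof of Theorem \ref{T:BC1} in \cite{GR}): failure of condition (1) at a nonarchimedean $v_0$ indeed forces $\mathrm{rk}_{K_{v_0}}\cG_1\neq\mathrm{rk}_{K_{v_0}}\cG_2$ (rank $n$ or $n-1$ for type $B_n$, rank $\lfloor n/2\rfloor$ for nonsplit type $C_n$), and a $K$-isogeny of tori preserves local split ranks, so a torus of maximal local rank in the group of larger local rank does the job, without any use of condition $(i)$.

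The genuine gap is in the remaining case ($v_0$ real, $\mathrm{rk}_{\R}\cG_1=\mathrm{rk}_{\R}\cG_2=r$ with $0<r<n$), and it lies in the reduction of your first paragraph, not merely in the decision to quote \cite{GR}. A $K$-isogeny does \emph{not} preserve the isomorphism class of a torus over an archimedean completion: over $\R$ two tori are isogenous exactly when they have the same dimension and the same split rank (for instance $\mathrm{GL}_1\times\mathrm{R}^{(1)}_{\C/\R}(\mathrm{GL}_1)$ and $\mathrm{R}_{\C/\R}(\mathrm{GL}_1)$ are isogenous but not isomorphic), so the non-realization of the $\R$-isomorphism class of $T_i(v_0)$ in $\cG_{3-i}$ does not obstruct a $K$-isogeny. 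Worse, no local obstruction of the kind your reduction requires can exist in this case: when both real forms have rank $r$ with $0<r<n$, each of them contains maximal $\R$-tori of every split rank $0,1,\ldots,r$ (take $a\leqslant r$ hyperbolic planes and fill up with compact factors, both in the orthogonal and in the quaternionic description), so every isotropic maximal $\R$-torus of $\cG_i$ is $\R$-isogenous to some maximal $\R$-torus of $\cG_{3-i}$, and the only isogeny-invariant local datum (the split rank) can never distinguish the two groups at $v_0$. This is precisely why hypothesis $(i)$ occurs in the statement, and your parenthetical misplaces its role: it is a genericity condition on the \emph{global} torus $T_i$, not a way of choosing $T_i(v_0)$, and its function is to rigidify a putative $K$-isogeny $T_{3-i}\to T_i$ --- when $\theta_{T_i}(\Ga(K_{T_i}/K))=W(\cG_i,T_i)$ the isogeny can be rescaled so as to identify the (dual) root systems (Lemma 4.3 and Remark 4.4 of \cite{PR6}), after which the transfer question becomes an embedding problem at $v_0$ for the associated \'etale algebras with involution, and it is that embedding problem (analyzed in \cite{GR} via \cite{PR7}) which has no solution when (2) fails with $0<r<n$. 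So the intermediate statement you would need to quote from \cite{GR} is the one about local embeddings of algebras with involution together with this rescaling step; the torus-isomorphism-class statement you formulate does not imply the isogeny conclusion of the proposition.
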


We will now use this proposition to prove the following.

\begin{prop}\label{P:BC3}
Notations and conventions be as in Theorem \ref{T:BC1}. Assume that there
exists $v_0 \in V^K$ such that the corresponding condition (1) or (2)
fails. Then condition $(C_i)$ holds for at least one $i \in
\{1 , 2\}$.
\end{prop}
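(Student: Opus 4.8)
The plan is to combine Proposition \ref{P:BC2} with the torus-construction machinery of \S\ref{S:Ex} and the unscrambling results of \S\ref{S:WC}, exactly in the spirit of the proofs of Theorem \ref{T:ArG1}(b), (d) and Theorem \ref{T:ADE6}. By Proposition \ref{P:BC2}, fix $i \in \{1,2\}$ and a $K_{v_0}$-isotropic maximal torus $T_i(v_0)$ of $\cG_i$ with the stated non-isogeny property; I will show that $(C_i)$ holds for this $i$. Set $V = S \cup \{v_0\}$; note $v_0$ is automatically nonarchimedean if condition (1) fails, and may be archimedean if (2) fails, but in either case $K_{v_0}$ is locally compact, so Theorem \ref{T:Ex1} applies. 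For each $v \in V$ choose a maximal $K_v$-torus $T^{(v)}$ of $\cG_i$ with $T^{(v_0)} = T_i(v_0)$ and with $\mathrm{rk}_{K_v}\,T^{(v)} = \mathrm{rk}_{K_v}\,\cG_i$ for $v \in S$ (so that, as in the proof of Theorem \ref{T:ArG1}(b), Dirichlet's Theorem will supply elements of infinite order inside $\Gamma_i$).

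Given $m \geqslant 1$, apply Theorem \ref{T:Ex1} (with $L$ a finite Galois extension of $K$ splitting $\cG_i$; recall $\cG_i$ is of inner type, being of type $B_n$ or $C_n$) to obtain maximal $K$-tori $T_1, \ldots, T_m$ of $\cG_i$, independent over $L$, each satisfying $\theta_{T_j}(\Ga(L_{T_j}/L)) \supset W(\cG_i, T_j)$ — hence in particular $\theta_{T_j}(\Ga(K_{T_j}/K)) = W(\cG_i, T_j)$, since $\cG_i$ is inner over $K$ — and each conjugate to $T^{(v)}$ over $K_v$ for all $v \in V$. The rank condition at the places of $S$ together with Dirichlet's Theorem (as quoted in the proof of Theorem \ref{T:ArG1}(b)) yields elements $\gamma_j \in \Gamma_i \cap T_j(K)$ of infinite order. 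By Lemma \ref{L:P1} the $\gamma_1, \ldots, \gamma_m$ are multiplicatively independent. It remains to show they are not weakly contained in $\Gamma_{3-i}$: if they were, then by Theorem \ref{T:10} (using $w_1 = w_2$, so $\cG_1$ and $\cG_2$ have equal absolute rank) there would be some $j$ and a maximal $K$-torus $T^{(3-i)}$ of $\cG_{3-i}$ admitting a $K$-isogeny $T^{(3-i)} \to T_j$. But $T_j$ satisfies condition $(i)$ of Proposition \ref{P:BC2} (the equality $\theta_{T_j}(\Ga(K_{T_j}/K)) = W(\cG_i,T_j)$) and condition $(ii)$ (conjugacy to $T_i(v_0)$ over $K_{v_0}$), so no such $K$-isogeny can exist — contradiction. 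Hence $(C_i)$ holds.

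The main subtlety is ensuring that the torus $T_j$ produced by Theorem \ref{T:Ex1} genuinely meets the two hypotheses of Proposition \ref{P:BC2} simultaneously: condition $(i)$ there asks for the \emph{equality} $\theta_{T_j}(\Ga(K_{T_j}/K)) = W(\cG_i,T_j)$, and I obtain this by noting that the inclusion ``$\supset$'' is guaranteed by Theorem \ref{T:Ex1} while the inclusion ``$\subset$'' is automatic because $\cG_i$ is of inner type over $K$ (Lemma 4.1 of \cite{PR6}, as used repeatedly in \S\ref{S:FDef}). A secondary point is the choice of $V$ and of the local tori $T^{(v)}$ at $v \in S$: I want these to have maximal $K_v$-rank so that $d_{T_j}(S) > 0$ and Dirichlet's Theorem produces infinite-order elements of $\Gamma_i$; this is exactly the bookkeeping carried out in part (b) of the proof of Theorem \ref{T:ArG1}, and there is no new difficulty here. (If the offending place $v_0$ happens to lie in $S$, one simply takes $V = S$ and arranges $T^{(v_0)} = T_i(v_0)$; the argument is unchanged.)
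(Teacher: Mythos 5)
Your proposal is correct and takes essentially the same route as the paper's own proof: fix $i$ and $T_i(v_0)$ via Proposition \ref{P:BC2}, produce independent irreducible tori through Theorem \ref{T:Ex1} with prescribed local behavior at $v_0$ and at the places of $S$, extract infinite-order elements of $\Gamma_i$ by Dirichlet's theorem, and conclude with Lemma \ref{L:P1} and Theorem \ref{T:10} against Proposition \ref{P:BC2}. The only cosmetic difference is that the paper applies Theorem \ref{T:Ex1} with $L=K$ (legitimate since groups of type $B_n$ and $C_n$ are inner over $K$), getting independence over $K$ directly, whereas your choice of a splitting field $L$ requires the one-line extra remark that independence over $L$ together with $[K_{T_j}:K]=\vert W(\cG_i,T_j)\vert=[L_{T_j}:L]$ forces independence over $K$, which is the hypothesis Theorem \ref{T:10} needs in order to yield a $K$-isogeny rather than an $L$-isogeny.
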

\begin{proof}
As $G_1$ and $G_2$ are adjoint, $\Gamma_i
\subset \cG_i(K)$ for $i = 1, 2$. Pick $i \in \{1 , 2\}$ and a
maximal $K_{v_0}$-torus $T_i(v_0)$ of $\cG_i$ as in Proposition
\ref{P:BC2}; we will show that property  $(C_i)$ holds for this $i$.
Fix $m \geqslant 1$. Using Theorem \ref{T:Ex1}, we can find maximal
$K$-tori $T_1, \ldots , T_m$ of $\cG_i$ that are independent over $K$
and satisfy the following conditions for each $j \leqslant m$:

\vskip2mm

\noindent $\bullet$ $\theta_{T_j}(\Ga(K_{T_j}/K)) = W(\cG_i , T_j)$,

\vskip1mm

\noindent $\bullet$ \parbox[t]{13cm}{$T_j$ is conjugate to
$T_i(v_0)$ by an element of $\cG_i(K_{v_0})$, and \newline
$\mathrm{rk}_{K_v}\, T_j = \mathrm{rk}_{K_v}\, \cG_i$ for all $v \in
S \setminus \{ v_0 \}$.}

\vskip2mm

\noindent Since $T_i(v_0)$ is $K_v$-isotropic, we have  
$d_{T_j}(S):= \sum_{v\in S}\mathrm{rk}_{K_v} T_j > 0$
no matter whether  or not $v_0$ belongs to $S$. Besides, $T_j$ is
automatically $K$-anisotropic, so it follows from Dirichlet's
Theorem that one can pick an element of infinite order $\gamma_j \in
\Gamma_i \cap T_j(K)$ for each $j \leqslant m$. These elements are
multiplicatively independent by Lemma \ref{L:P1}, so we only need to
show that they are not weakly contained in $\Gamma_{3-i}$. However,
by Theorem \ref{T:10}, a relation of weak containment would imply
that $T_j$ for some $j \leqslant m$ would admit a $K$-isogeny 
onto a maximal $K$-torus $T^{(2)}$ of $\cG_2$. However, this
is impossible(cf.\,Proposition \ref{P:BC2}).
\end{proof}

Let $G_1$ and $G_2$ be connected absolutely simple adjoint algebraic $\R$-groups of
type $B_n$ and $C_n$ $(n \geqslant 3)$ respectively, and let
$\Gamma_i$ be a discrete torsion-free $(\cG_i, K_i)$-arithmetic
subgroup of $\mathcal{G}_i=G_i (\R)$, for $i = 1, 2$. If $K_1 \neq K_2$, then
either condition $(T_1)$ or $(T_2)$ holds for the locally symmetric spaces
$\fX_{\Gamma_1}$ and $\fX_{\Gamma_2}$ by Theorem 1. So, let us assume that $K_1 = K_2 =: K$. If
there exists $v_0 \in V^K$ such that the corresponding condition (1)
or (2) of Theorem \ref{T:BC1} fails, then by Proposition \ref{P:BC3}
the groups $\Gamma_1$ and $\Gamma_2$ satisfy $(C_i)$ for at least
one  $i \in \{1 , 2\}$, and then $(T_i)$ holds for the same $i$
(cf.\,Corollary \ref{C:LG1}). So, to complete the proof of Theorem 4,
it remains to be shown that if conditions (1) and (2) of Theorem
\ref{T:BC1} hold for all $v \in V^K$, then
\begin{equation}\label{E:BC1}
\Q \cdot L(\fX_{\Gamma_2}) = \lambda \cdot \Q \cdot
L(\fX_{\Gamma_1}) \ \ \ \text{where} \ \ \ \lambda =
\sqrt{\frac{2n+2}{2n-1}}.
\end{equation}
We will show that 
provided the conditions (1) and (2) of Theorem \ref{T:BC1} hold, 
given a maximal $K$-torus $T_1$ of $\cG_1$, 
there exists a maximal $K$-torus $T_2$ of ${\widetilde\cG}_2$ and a $K$-isomorphism $T_1 \to T_2$ such that for
any $\gamma_1 \in T_1(K)$, and the corresponding $\gamma_2 \in T_2(K)$, one
can {\it relate} the  following sets $$\{\alpha(\gamma_1)\ |\ \alpha \in \Phi(\cG_1,T_1)\} \ \ \ {\rm {and}}\ \ \ \{\alpha(\gamma_2)\ |\ \alpha \in \Phi({\widetilde\cG}_2,T_2)\},$$  and derive
information about the ratio of the lengths of the closed 
geodesics associated to $\gamma_1$ and $\gamma_2$. The easiest way to do this is to use the description of
maximal $K$-tori of $\cG_1$ and ${\widetilde{\cG}}_2$ in terms of commutative  \'etale
algebras.

The group $\cG_1$ can be realize as the special unitary group
$\mathrm{SU}(A_1 , \tau_1)$ where $A_1 = M_{2n+1}(K)$ and $\tau_1$
is an involution of $A_1$ of orthogonal type (which means that
$\dim_K A^{\tau_1}_1 = (2n + 1)(n + 1)$). Any maximal $K$-torus
$T_1$ of $\cG_1$ corresponds to a maximal commutative \'etale $\tau_1$-invariant
subalgebra $E_1$ of $A_1$ such that $\dim_K E^{\tau_1}_1 = n + 1$;
more precisely, $T = \left( \mathrm{R}_{E_1/K}(\mathrm{GL}_1) \cap
\cG_1 \right)^{\circ}$. It is more convenient for our purposes to
think that $T_1$ corresponds to an embedding $\iota_1 \colon (E_1 ,
\sigma_1) \hookrightarrow (A_1 , \tau_1)$ of algebras with
involution, where $E_1$ is a commutative  \'etale $K$-algebra of dimension $(2n
+ 1)$ equipped with an involution $\sigma_1$ such that $\dim_K
E^{\sigma_1}_1 = n + 1$.

Similarly, the group ${\widetilde{\cG}}_2$ can be realized as the special
unitary group $\mathrm{SU}(A_2 , \tau_2)$, where $A_2$ is a central
simple algebra over $K$ of dimension $4n^2$, and $\tau_2$ is an
involution of $A_2$ of symplectic type (i.e., $\dim_K A^{\tau_2}_2 =
(2n +-1)n$). Furthermore, any maximal $K$-torus $T_2$ corresponds to
an embedding $\iota_2 \colon (E_2 , \sigma_2) \hookrightarrow (A_2 ,
\tau_2)$ of algebras with involution where $E_2$ is a commutative \'etale
$K$-algebra of dimension $2n$ equipped with an involution $\sigma_2$
such that $\dim_K E^{\sigma_2}_2 = n$.

Now, any involutory commutative \'etale algebra $(E_1 , \sigma_1)$ as above
admits a decomposition
$$
(E_1 , \sigma_1) = (\widetilde{E}_1 , \tilde{\sigma}_1) \oplus (K ,
\mathrm{id}_K)
$$
where $\widetilde{E}_1 \subset E_1$ is a $(2n)$-dimensional
$\sigma_1$-invariant subalgebra and $\tilde{\sigma}_1 = \sigma_1
\vert \widetilde{E}_1$; note that $\dim_K \widetilde{E}^{\tilde{\sigma}_1}_1
= n$. It was shown in \cite{GR} using Theorem 7.3 of \cite{PR7} that
if conditions (1) and (2) of Theorem \ref{T:BC1} hold then $(E_1 ,
\sigma_1)$ as above admits an embedding $\iota_1 \colon (E_1 ,
\sigma_1) \hookrightarrow (A_1 , \tau_1)$ if and only if $(E_2 ,
\sigma_2) := (\widetilde{E}_1 , \tilde{\sigma}_1)$ admits an embedding
$\iota_2 \colon (E_2 , \sigma_2) \hookrightarrow (A_2 , \tau_2)$.
This implies that for any maximal $K$-torus $T_1$ of $\cG_1$ there
exists a $K$-isomorphism $\varphi \colon T_1 \to T_2$ onto a maximal
$K$-torus $T_2$ of $\widetilde{\cG}_2$ that is induced by the above correspondence between
the associated algebras $(E_1 , \sigma_1)$ and $(E_2 , \sigma_2)$,
and vice versa. Fix the tori $T_1 , T_2$, the $K$-isomorphism $\varphi$,
the algebras $(E_1 , \sigma_1) , (E_2 , \sigma_2)$ and the
embeddings $\iota_1 , \iota_2$ for the remainder of this section. We
also assume henceforth that the discrete torsion-free
subgroups $\Gamma_i \subset \mathcal{G}_i$ are $(\cG_i ,
K)$-arithmetic. Given $\gamma_1 \in T_1(K) \cap \Gamma_1$, set
$\gamma_2 = \varphi(\gamma_1) \in T_2(K)$. Then there exists $n_2
\geqslant 1$ such that $\gamma^{n_2}_2 \in \Gamma_2$. It follows
from the discussion at the beginning of \S \ref{S:FGeod} that the
ratio
$\ell_{\Gamma_2}(c_{\gamma^{n_2}_2})/\ell_{\Gamma_1}(c_{\gamma_1})$
of the lengths of the corresponding geodesics is a rational multiple
of the ratio
$\lambda_{\Gamma_2}(\gamma_2)/\lambda_{\Gamma_1}(\gamma_1)$. Let us
show that in fact
\begin{equation}\label{E:BC7}
\lambda_{\Gamma_2}(\gamma_2) / \lambda_{\Gamma_1}(\gamma_1) =
\sqrt{\frac{2n+2}{2n - 1}}.
\end{equation}
Indeed, let $x \in E_1$ such that $\iota_1(x) = \gamma_1$. The roots
of the characteristic polynomial of $x$ are of the form $$\lambda_1,
\ldots , \lambda_n,\: \lambda^{-1}_1, \ldots , \lambda^{-1}_n,\: 1$$
for some complex numbers $\lambda_1, \ldots , \lambda_n$. Then
\begin{equation}\label{E:BC8}
\{ \alpha(\gamma_1) \: \vert \: \alpha \in \Phi(\cG_1 , T_1) \} = \{
\lambda^{\pm 1}_i \} \cup \{ \lambda^{\pm 1}_i \cdot \lambda^{\pm
1}_j \: \vert \: i < j \}.
\end{equation}
For the corresponding ``truncated'' element $\tilde{x} \in
\widetilde{E}_1= E_2$, the roots of the characteristic polynomial are
$$
\lambda_1, \ldots , \lambda_n,\: \lambda^{-1}_1, \ldots ,
\lambda^{-1}_n,
$$
and
\begin{equation}\label{E:BC9}
\{ \alpha(\gamma_2) \: \vert \: \alpha \in \Phi(\cG_2 , T_2) \} = \{
\lambda^{\pm 2}_i \} \cup \{ \lambda^{\pm 1}_i \cdot \lambda^{\pm
1}_j \: \vert \: i < j \}.
\end{equation}
Set $\mu_i = \log \vert \lambda_i \vert$. Then it follows from
(\ref{E:BC8}) that
$$
\lambda_{\Gamma_1}(\gamma_1)^2 = \sum_{i = 1}^n (\pm \mu_i)^2 +
\sum_{1 \leqslant i < j \leqslant n} (\pm \mu_i \pm \mu_j)^2 = (4n -
2) \cdot \sum_{i = 1}^n \mu^2_i.
$$
Similarly, we derive from (\ref{E:BC9}) that
$$
\lambda_{\Gamma_2}(\gamma_2)^2 = \sum_{i = 1}^n (\pm 2 \mu_i)^2 +
\sum_{1 \leqslant i < j \leqslant n} (\pm \mu_i \pm \mu_j)^2 = 4(n +
1) \cdot \sum_{i = 1}^n \mu^2_i.
$$
Comparing these equations, we obtain (\ref{E:BC7}). Then the
inclusion $\supset$ in (\ref{E:BC1}) follows immediately, and the
opposite inclusion is established by a symmetric argument,
completing the proof of Theorem 4.

\vskip3mm

Since the symmetric space of the real rank-one form of
type $B_n$ is the (real) hyperbolic space $\mathbb{H}^{2n}$, and the
symmetric space of the real rank-one form of type $C_n$ is the
quaternionic hyperbolic space $\mathbb{H}^{n}_{\mathbf{H}}$, we
obtain the following.

\begin{cor}\label{C:BC1}
Let $M_1$ be an arithmetic quotient of $\mathbb{H}^{2n}$, and $M_2$
be an arithmetic quotient of $\mathbb{H}_{\mathbf{H}}^n$ where $n
\geqslant 3$. Then $M_1$ and $M_2$ satisfy $(T_i)$ and $(N_i)$ for
at least one $i \in \{1 , 2\}$; in particular, $M_1$ and $M_2$ are
not length-commensurable.
\end{cor}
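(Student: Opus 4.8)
The plan is to obtain Corollary \ref{C:BC1} from the machinery behind Theorem 4, the point being that the exceptional scaling alternative in that theorem cannot arise for honest hyperbolic quotients. By the remarks preceding the statement, $M_i = \fX_{\Gamma_i}$, where $G_1$ is the rank-one real form of type $B_n$ (so $\fX_1 = \mathbb{H}^{2n}$) and $G_2$ is the rank-one real form of type $C_n$ (so $\fX_2 = \mathbb{H}^n_{\mathbf{H}}$); in particular $w_1 = w_2 = 2^n \cdot n!$. Since $\fX_{\Gamma_i}$ is a manifold, $\Gamma_i$ is discrete in $G_i(\R)$, so, being arithmetic, it is $(\cG_i, K_i)$-arithmetic with $S_i = V^\infty_{K_i}$ for some number field $K_i \subset \R$ (a nonarchimedean place of $S_i$ would be one at which $\cG_i$ is $K_i$-isotropic, which would force $\Gamma_i$ to be non-discrete in $G_i(\R)$). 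Moreover, under the distinguished embedding $K_i \hookrightarrow \R$, which corresponds to a place $v_0 \in V^{K_i}$, the group $\cG_i((K_i)_{v_0}) = G_i(\R)$ has $\R$-rank $1$, so $\mathrm{rk}_{(K_i)_{v_0}}\,\cG_i = 1$.

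First I would dispose of the case $K_{\Gamma_1} \neq K_{\Gamma_2}$: since $w_1 = w_2$, at least one of $K_{\Gamma_1} \not\subset K_{\Gamma_2}$ or $K_{\Gamma_2} \not\subset K_{\Gamma_1}$ holds, and Theorem 1 then gives $(T_i)$, hence $(N_i)$, for the corresponding $i$. So I may assume $K_{\Gamma_1} = K_{\Gamma_2} =: K$; then $K_1 = K_2 = K$ and $S_1 = S_2 = V^\infty_K$, so $\Gamma_1$ and $\Gamma_2$ are $(\cG_i, K, S)$-arithmetic with a common pair $(K,S)$, and Theorem \ref{T:BC1} and Proposition \ref{P:BC3} apply. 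Condition (2) of Theorem \ref{T:BC1} requires that $\mathrm{rk}_{K_v}\,\cG_1 = \mathrm{rk}_{K_v}\,\cG_2$ be either $0$ or $n$ at every archimedean $v \in V^K$; but at $v = v_0$ this common rank equals $1$, and $1 \notin \{0, n\}$ since $n \geqslant 3$. Hence condition (2) fails at $v_0$, so Proposition \ref{P:BC3} yields property $(C_i)$ for at least one $i$, Corollary \ref{C:LG1} then gives $(T_i)$ for that $i$, and $(N_i)$ follows from the implication $(T_i) \Rightarrow (N_i)$ noted in the introduction. (This is exactly the reason the proof of Theorem 4 never reaches the scaling relation here.)

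The last assertion is then formal: were $M_1$ and $M_2$ length-commensurable, we would have $\Q \cdot L(M_1) = \Q \cdot L(M_2)$, hence $L(M_i) \subset A \cdot \Q \cdot L(M_{3-i})$ with the finite set $A = \{1\}$, contradicting $(N_i)$. I do not expect any genuine obstacle; the only step deserving care is the identification $\mathrm{rk}_{(K_i)_{v_0}}\,\cG_i = \mathrm{rk}_{\R}\,G_i = 1$, i.e. checking that an arithmetic quotient of $\mathbb{H}^{2n}$ (resp. $\mathbb{H}^n_{\mathbf{H}}$) is indeed a reduction of the rank-one form of type $B_n$ (resp. $C_n$), so that the numerical coincidences (1)--(2) of Theorem \ref{T:BC1}, which underlie the exceptional scaling relation of Theorem 4, are ruled out once $n \geqslant 3$.
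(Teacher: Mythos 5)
Your proposal is correct and follows essentially the same route as the paper: the paper obtains the corollary by observing that the relevant real forms have $\R$-rank $1$, so condition (2) of Theorem \ref{T:BC1} fails at the archimedean place given by $K \hookrightarrow \R$ (since $1 \neq 0, n$ for $n \geqslant 3$), whence Proposition \ref{P:BC3} and Corollary \ref{C:LG1} give $(C_i)$, $(T_i)$ and $(N_i)$, exactly as you argue. Your extra checks (that discreteness forces $S_i = V^{\infty}_{K_i}$, and the reduction of the case $K_{\Gamma_1} \neq K_{\Gamma_2}$ to Theorem 1) are points the paper leaves implicit in the proof of Theorem 4, and they are handled correctly.
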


(We see from Theorem 1 that  the same conclusion holds when $M_1$ is as in the above corollary  
but $M_2$ is an
arithmetic quotient of $\mathbb{H}_{\mathbf{H}}^m$ with $m \neq n$.)

\vskip2mm

On the other hand, using Theorem 4, one can construct compact
locally symmetric spaces with isometry groups of types $B_n$ and
$C_n$ $(n \geqslant 3)$, respectively, that are length-similar - so,
these spaces can be made length-commensurable by scaling the metric
on one of them. According to the results of Sai-Kee Yeung
\cite{SKY}, however, scaling will never make these spaces (or their
finite-sheeted covers) isospectral.

\vskip9mm

\centerline{\sc Appendix. Proofs of Theorems \ref{T:equal-rank} and
\ref{T:equal-rank}$'$}

\vskip3mm

First, we need to review some notions pertaining to the Tits index
and recall some of the results established in \cite{PR6}. Let $G$ be
a semi-simple algebraic $K$-group. Pick a maximal $K$-torus $T_0$ of
$G$ that contains a maximal $K$-split torus $S_0$ and choose
coherent orderings on $X(T_0) \otimes_{\Z} \R$ and $X(S_0)
\otimes_{\Z} \R$ (which means that the linear map between these
vector spaces induced by the restriction $X(T_0) \to X(S_0)$ takes
nonnegative elements to nonnegative elements). Let $\Delta_0 \subset \Phi(G ,
T_0)$ denote the system of simple roots corresponding to the chosen
ordering on $X(T_0) \otimes_{\Z} \R.$ Then a root $\alpha \in
\Delta_0$ (or the corresponding vertex in the Dynkin diagram) is
{\it distinguished} in the Tits index of $G/K$ if its restriction to
$S_0$ is nontrivial. Let $\Delta_0^{(d)}$ be the set of distinguished roots in $\Delta_0$ and $P$ be 
the minimal parabolic $K$-subgroup containing $S_0$ determined by the above ordering on 
$\Phi(G,T_0)\,(\subset X(T_0))$. Then $Z_G(S_0)$ is the unique Levi subgroup of $P$ containing 
$T_0$, and $\Delta_0\setminus\Delta_0^{(d)}$ is a basis of its root system with respect to $T_0$. Moreover, 
the set $\Phi(P,T_0)$ of roots of $P$ with respect to $T_0$ is the union of positive roots in $\Phi(G,T_0)$ (positive with respect to the ordering fixed above) and the roots $\Phi(Z_G(S_0), T_0)$ of the subgroup $Z_G(S_0)$; hence, $\Delta_0\setminus \Delta_0^{(d)} = \Delta_0\cap -\Phi(P,T_0)$.  The set of  roots of the unipotent radical of $P$ is the set of all positive roots except the roots which are nonnegative integral linear combination of the roots in $\Delta_0\setminus \Delta_0^{(d)}$. 

The notion of a distinguished vertex is
invariant in the following sense: choose another compatible orderings on
$X(T_0) \otimes_{\Z} \R$ and $X(S_0)
\otimes_{\Z} \R$.  Let ${\Delta'_0} \subset \Phi(G , T_0)$ be the
system of simple roots corresponding to this new ordering and ${\Delta'_0}^{(d)}$ 
be the set of distinguished simple roots. Then 
there
exists a unique element $w$ in the Weyl group $W(G , T_0)$ such that
${\Delta'_0} = w(\Delta_0)$ and we call the identification of
$\Delta_0$ with ${\Delta'_0}$ using $w$ the {\it canonical  identification}. We assert that the
canonical identification identifies distinguished roots  with 
distinguished roots. To see this, note that  if $P'$ is the minimal parabolic $k$-subgroup containing $S_0$ determined 
by the new ordering, then there exists  $n\in N_G(S_0)(K)$ such that $P'=nPn^{-1}$. As $nT_0n^{-1}\subset Z_G(S_0)$, we can find  $z\in Z_G(S_0)(K_{\rm{sep}})$ such that $znT_0(zn)^{-1} =T_0$, i.e., $zn$ normalizes $T_0$, and  $zn(\Delta_0\setminus \Delta_0^{(d}= {\Delta'_0}\setminus {\Delta'_)}^{(d)}$.   It is obvious that $znP(zn)^{-1} = P'$ and that $zn$ carries the set of roots  which are positive with respect to the first ordering into the set of roots which are positive with respect to the second ordering. Therefore, $nz$ carries $\Delta_0$ into  ${\Delta'_0}$, and hence $w$ is its image in the Weyl group. From this we conclude that $w(\Delta_0\setminus \Delta_0^{(d)})= {\Delta'_0}\setminus {\Delta'_0}^{(d)}$, which implies that $w(\Delta_0^{(d)} = {\Delta'_0}^{(d)}$. This  proves our assertion.

We recall that $G$ is
$K$-isotropic if and only if the Tits index of $G/K$ has a
distinguished vertex, and, more generally, $\mathrm{rk}_K\,G$
equals the number of distinguished orbits in $\Delta_0$ under the $*$-action (for the definition and properties of  the $*$-action see \cite{PR6}, \S4).

Let now $T$ be an arbitrary maximal $K$-torus of $G.$ Fix a system
of simple roots $\Delta \subset \Phi(G , T).$ Let $\mathscr{K}$ be a field extension of $K$ such that both $T$ and $T_0$ split over it. Then there exists $g \in G(\mathscr{K})$
such that the inner automorphism $i_g \colon x \mapsto gxg^{-1}$ carries $T_0$ onto $T$ and $i^*_g(\Delta) = \Delta_0.$
Moreover, such a $g$ is unique up to right multiplication by an
element of $T_0({\mathscr K}),$ implying that the identification of $\Delta$ with
$\Delta^0$ provided by $i^*_g$ does not depend on the choice of $g$, and we call it the {\it
canonical identification}. A vertex $\alpha \in \Delta$ is said to {\it correspond
to a distinguished vertex} in the Tits index of $G/K$ if  the 
vertex $\alpha_0 \in \Delta_0$ corresponding to $\alpha$ in the
canonical identification is distinguished; the set of all such vertices in $\Delta$
will be denoted by $\Delta^{(d)}(K).$ Clearly, the group $G$ is
quasi-split over $K$ if and only if $\Delta^{(d)}(K) = \Delta.$  The
notion of canonical identification can be extended in the obvious
way to the situation where we are given two maximal $K$-tori $T_1,
T_2$ of $G$ and the systems of simple roots $\Delta_i \in \Phi(G ,
T_i)$ for $i = 1, 2;$ under the canonical identification
$\Delta^{(d)}_1(K)$ is mapped onto $\Delta^{(d)}_2(K).$ The $*$-action of the absolute Galois group $\Ga(K_{sep}/K)$ on $\Delta_1$ and $\Delta_2$ commutes with the canonical identification of $\Delta_1$ with $\Delta_2$, see Lemma 4.1(a) of \cite{PR6}.  The 
set $\Delta^{(d)}(K)$ is invariant under the $*$-action, so it makes
sense to talk about distinguished orbits. 

Let now $K$ be a number
field. We say that an orbit of the $*$-action in $\Delta$ is
distinguished everywhere if it is contained in $\Delta^{(d)}(K_v)$
for all $v \in V^K.$ The following was established in  \cite{PR6}, Proposition 7.2:

\vskip2mm

\noindent $\bullet$ \parbox[t]{13cm}{An orbit of the $*$-action in
$\Delta$ is distinguished (i.e., is contained in $\Delta^{(d)}(K)$)
if and only if it is distinguished everywhere.}

\vskip2mm

\noindent This implies  the following (Corollary 7.4 in \cite{PR6}):

\vskip2mm

\noindent $\bullet$ \parbox[t]{13.5cm}{Let $G$ be an absolutely almost
simple group of one of the following types: $B_n$ $(n \geqslant 2),$
$C_n$ $(n \geqslant 2),$ $E_7,$ $E_8,$ $F_4$ or $G_2.$ If $G$ is
isotropic over $K_v$ for all real $v \in V^K_{\infty}$, then $G$ is
isotropic over $K.$ Additionally, if $G$ is as above, but not of
type $E_7,$ then $\displaystyle \mathrm{rk}_K\, G = \min_{v \in V^K}
\mathrm{rk}_{K_v}\, G.$}

Before proceeding to the proof of Theorem \ref{T:equal-rank}, we
observe that since by assumption $L_1 = L_2 =: L,$ it follows from
condition (1) in the statement of that theorem that
$$
\theta_{T_i}(\Ga(L_{T_i}/L)) = W(G_i , T_i) \ \ \text{for} \ \ i =
1, 2.
$$
So, the fact that there is a  isogeny $T_1 \to T_2$ defined over $L$ 
implies that $w_1 = w_2.$ Thus, this condition holds in both the 
Theorems \ref{T:equal-rank} and \ref{T:equal-rank}$'$. As we already
mentioned, this implies that either the groups $G_1$ and $G_2$ are
of the same Killing-Cartan type, or one of them is of type $B_n$ and
the other is of type $C_n$ for some $n \geqslant 3;$ in particular,
the groups have the same absolute rank.

\vskip3mm

\noindent {\it Proof of Theorem \ref{T:equal-rank} for types $B_n,$
$C_n,$ $E_8,$ $F_4$ and $G_2$.} As we mentioned above, for these
types we have
$$
\mathrm{rk}_K\,G_i = \min_{v \in V^K} \mathrm{rk}_{K_v}\,G_i \ \
\text{for} \ \ i = 1, 2.
$$
Condition (2) of the theorem implies that $\mathrm{rk}_{K_v}\,G_1 =
\mathrm{rk}_{K_v}\, G_2$ for all $v \in \mathcal{V}.$ On the other
hand, for $v \notin \mathcal{V},$ both $G_1$ and $G_2$ are split
over $K_v,$ which automatically makes the local ranks equal. It
follows that $\mathrm{rk}_K\,G_1 = \mathrm{rk}_K\,G_2.$
Furthermore, inspecting the tables in \cite{Ti}, one observes that
the Tits index of an absolutely almost simple group $G$ of one of
the above types over a local or global field $K$ is completely
determined by its $K$-rank, and our assertion about the local and
global Tits indices of $G_1$ and $G_2$ being isomorphic follows  (in case
$G_1$ and $G_2$ are of the same type). \hfill $\Box$

\vskip3mm

\noindent {\it Proof of Theorem} \ref{T:equal-rank}$'$ {\it for types
$B_n,$ $C_n,$ $E_7,$ $E_8,$ $F_4$ and $G_2$.} It is enough to show
that $G_2$ is ${K_2}_v$-isotropic for all $v \in V^{K_2},$ and in
fact, since $G_2$ is assumed to be quasi-split over ${K_2}_v$ for
all $v \notin \mathcal{V}_2,$ it is enough to check this only for $v
\in \mathcal{V}_2.$ However by our construction, each $v \in
\mathcal{V}_2$ is an extension of some $v_0 \in \mathcal{V}_1.$
Since $G_1$ is $K_1$-isotropic, we have
$$
\mathrm{rk}_{{K_2}_v}\,T_1 \geqslant \mathrm{rk}_{{K_1}_{v_0}}\,
T_1 = \mathrm{rk}_{{K_1}_{v_0}}\,G_1 > 0,
$$
so the existence of a $K_2$-isogeny $T_1 \to T_2$ implies that
$\mathrm{rk}_{{K_2}_v}\: T_2 > 0,$ hence $G_2$ is
${K_2}_v$-isotropic as required. \hfill $\Box$

\vskip3mm

Thus, it remains to prove Theorems \ref{T:equal-rank} and
\ref{T:equal-rank}$'$ assuming that $G_1$ and $G_2$ are of the same
type which is one of the following: $A_n,$ $D_n,$ $E_6$ and $E_7$
(recall that  Theorem \ref{T:equal-rank}$'$ has already been proven for groups of type $E_7$). Then
replacing the isogeny $\pi \colon T_1 \to T_2,$ which is defined
over $K$ in Theorem \ref{T:equal-rank} and over $K_2$ in Theorem
\ref{T:equal-rank}$'$, with a suitable multiple, we may (and we
will) assume that $\pi^*(\Phi(G_2 , T_2)) = \Phi(G_1 , T_1).$
Besides, we may assume through the rest of the appendix that $G_1$
and $G_2$ are adjoint, and then $\pi$ extends to an isomorphism
$\bar{\pi} \colon G_1 \to G_2$ over a separable closure of the field
of definition (cf.\:Lemma 4.3(2) and Remark 4.4 in \cite{PR6}). This has
two consequences that we will need. First, the assumption that $L_1
= L_2$ in Theorem \ref{T:equal-rank} implies that the orbits of the
$*$-action on a system of simple roots $\Delta_1 \subset \Phi(G_1 ,
T_1)$ correspond under $\pi^*$ to the orbits of the $*$-action on
the system of simple roots $\Delta_2 \subset \Phi(G_2 , T_2)$ such
that $\pi^*(\Delta_2) = \Delta_1,$ and this remains true over any
completion $K_v$. Thus, it is enough to prove for each $v \in V^K$
that  $\alpha_1 \in \Delta_1$ corresponds to a distinguished vertex
in the Tits index of $G_1/K_v$ if and only if $\alpha_2 :={\pi^*}^{-1}(\alpha_1)\in
\Delta_2$ corresponds to a
distinguished vertex in the Tits index of $G_2/K_v.$ Similarly, the
assumption that $L_2 \subset K_2L_1$ in Theorem
\ref{T:equal-rank}$'$ implies (in the above notations) that if $O_1
\subset \Delta_1$ is an orbit of the $*$-action, then
$(\pi^*)^{-1}(O_1)$ is a union of orbits of the $*$-action.
Consequently, it is enough to prove that if $\alpha_1 \in \Delta$
corresponds to a distinguished vertex in the Tits index of $G_1/K_1$,  
then $\alpha_2 :={\pi^*}^{-1}(\alpha_1)\in
\Delta_2$ 
corresponds to a distinguished vertex in the Tits index of
$G_2/{K_2}_v$ for all $v \in V^{K_2}.$

Second, given two systems of simple roots $\Delta'_1 , \Delta''_1
\subset \Phi(G_1 , T_1)$ and the corresponding systems of simple
roots $\Delta'_2 , \Delta''_2 \subset \Phi(G_2 , T_2),$ an
identification (induced by an automorphism of the root system)
$\Delta'_1 \simeq \Delta''_1$ is canonical if and only if the
corresponding identification $\Delta'_2 \simeq \Delta''_2$ is
canonical.

\vskip3mm

\noindent {\it Proof of Theorem \ref{T:equal-rank} for the remaining
types.} As above, fix systems of simple roots $\Delta_i \subset
\Phi(G_i , T_i)$ for $i = 1, 2$ so that $\pi^*(\Delta_2) =
\Delta_1.$ We need to show, for each $v \in V^K,$ that a root
$\alpha_1 \in \Delta_1$ corresponds to a distinguished vertex in the
Tits index of $G_1/K_v$ if and only if  $\alpha_2 :={\pi^*}^{-1}(\alpha_1)\in
\Delta_2$ corresponds to a distinguished
vertex in the Tits index of $G_2/K_v.$ This is obvious if both $G_1$
and $G_2$ are quasi-split over $K_v$ as then all the vertices in the
Tits indices of $G_1/K_v$ and $G_2/K_v$ are distinguished. So, it
remains to consider the case where $v \in \mathcal{V}.$ Let
$S^v_{i}$ be the maximal $K_v$-split subtorus of $T_i.$ Since
$\mathrm{rk}_{K_v}\,T_i = \mathrm{rk}_{K_v}\, G_i,$ we see that
$S^v_{i}$ is actually a maximal $K_v$-split torus of $G_i$ for $i =
1, 2,$ and besides, $\pi$ induces an isogeny between $S^v_{1}$ and
$S^v_{2}.$ Pick coherent orderings on $X(S^v_{1}) \otimes_{\Z} \R$
and $X(T_1) \otimes_{\Z} \R$ and on $X(S^v_{2}) \otimes_{\Z} \R$
and $X(T_2) \otimes_{\Z} \R$ that correspond to each other under
$\pi^*$, and let $\Delta^v_i \subset \Phi(G_i , T_i)$ for $i = 1, 2$
be the system of simple roots that corresponds to this (new)
ordering on $X(T_i) \otimes_{\Z} \R;$ clearly, $\pi^*(\Delta^v_2) =
\Delta^v_1.$ Furthermore, let $\alpha^v_i \in \Delta^v_i$ be the
root corresponding to $\alpha_i$ under the canonical identification
$\Delta_i \simeq \Delta^v_i;$ it follows from the above remarks that
$\pi^*(\alpha^v_2) = \alpha^v_1.$ On the other hand, $\alpha_i$
corresponds to a distinguished vertex in the Tits index of $G_i/K_v$
if and only if $\alpha^v_i$ restricts to $T^v_{is}$ nontrivially,
and the required fact follows. \hfill $\Box$

\vskip3mm

\noindent {\it Proof of Theorem} \ref{T:equal-rank}$'$ {\it for the
remaining types.} Let $T^0_1$ be a maximal $K_1$-torus of $G_1$ that
contains a maximal $K_1$-split torus $S^0_{1}.$ As in the
definition of the Tits index of $G_1/K_1,$ we fix coherent ordering
on $X(S^0_{1}) \otimes_{\Z} \R$ and $X(T^0_1) \otimes_{\Z} \R,$ and
let $\Delta^0_1$ denote the system of simple roots in $\Phi(G_1 ,
T^0_1)$ corresponding to this ordering on $X(T^0_1) \otimes_{\Z}
\R.$ Now, pick $g_1 \in G_1$ so that $T_1 = i_{g_1}(T^0_1),$ and let
$$
\Delta_1 = i^*_{g_1}(\Delta^0_1) \subset \Phi(G_1 , T_1).
$$
Furthermore, let $\Delta_2 \subset \Phi(G_2 , T_2)$ be the system of
simple roots such that $\pi^*(\Delta_2) = \Delta_1.$ It follows from
the above discussion that it is enough to prove the following:

\vskip2mm

\noindent $(*)$ \parbox[t]{13.2cm}{\it Let $\alpha^0_1 \in \Delta^0_1$
be distinguished in the Tits index of $G_1/K_1,$ and let $\alpha_1 =
i^*_{g_1}(\alpha^0_1) \in\Delta_1.$ Then $\alpha_2 :={\pi^*}^{-1}(\alpha_1)\in
\Delta_2$ corresponds to a
distinguished vertex of $G_2/{K_2}_v$ for all $v \in V^{K_2}.$}

\vskip2mm

\noindent Since $G_2$ is quasi-split over ${K_2}_v$ for $v \notin
\mathcal{V}_2,$ it is enough to prove $(*)$ assuming that $v \in
\mathcal{V}_2.$ By our construction, $v$ is an extension to $K_2$ of
some $v_0 \in \mathcal{V}_1.$ Since $\mathrm{rk}_{{K_1}_{v_0}}\,T_1
= \mathrm{rk}_{{K_1}_{v_0}}\, G_1,$ the maximal ${K_1}_{v_0}$-split
subtorus $S^{v}_{1}$ of $T_1$ is a maximal ${K_1}_{v_0}$-split
torus of $G_1,$ so it follows from the conjugacy of maximal split
tori (cf.\,\cite{Spr}, 15.2.6) that we can find an element $h_1$ of $G_1$, which is rational over a finite extension of ${K_1}_{v_0}$, such
that
$$
T_1 = i_{h_1}(T^0_1) \ \ \text{and} \ \ S^{v}_{1} \supset
i_{h_1}(S^0_{1}).
$$
We claim that furthermore that to prove $(*)$ it suffices to find a
different ordering on $X(T^0_1) \otimes_{\Z} \R$ (depending on $v$)
that induces the same ordering on $X(S^0_{1}) \otimes_{\Z} \R$
(this ordering on $X(T^0_1) \otimes_{\Z} \R$ will be  referred to as the 
{\it new}{ ordering}, while the ordering fixed earlier will be called the {\it
old}{ ordering}) such that if $\Delta^{0v}_1 \subset \Phi(G_1 , T^0_1)$ is the
system of simple root corresponding to the new ordering, $i^* \colon
\Delta^0_1 \simeq \Delta^{0v}_1$ is the canonical identification,
$\alpha^{0v}_1 = i^*(\alpha^0_1),$ and $\Delta^v_1 \subset \Phi(G_1
, T_1)$ and $\alpha^v_1 \in \Delta^v_1$ are such that
\begin{equation}\tag{A.1}\label{E:A1}
i^*_{h_1}(\Delta^v_1) = \Delta^{0v}_1 \ \ \text{and} \ \
i^*_{h_1}(\alpha^v_1) = \alpha^{0v}_1
\end{equation}
then for $\Delta^v_2 \subset \Phi(G_2 , T_2)$ such that
$\pi^*(\Delta^v_2) = \Delta^v_1,$  the root $\alpha^v_2 \in
\Delta^v_2$ such that $\pi^*(\alpha^v_2) = \alpha^v_1$ corresponds
to a distinguished vertex in the Tits index of $G_2/{K_2}_v.$
Indeed, the identification $\Delta_1 \simeq \Delta^v_1$ given by
$i^*_{h_1} \circ i^* \circ (i^*_{g_1})^{-1}$ is canonical and takes
$\alpha_1$ to $\alpha^v_1.$ It follows that the canonical
identification of $\Delta_2$ with $\Delta^v_2$ takes $\alpha_2$ to
$\alpha^v_2,$ so the fact that $\alpha^v_2$ corresponds to a
distinguished vertex in the Tits index of $G_2/{K_2}_v$ implies that
the same is true for $\alpha_2,$ as required. What is crucial for
the rest of the argument is that due to the invariance of the Tits
index, the root $\alpha^{0v}_1$ is distinguished in the Tits index
of $G_1/K_1,$ i.e. its restriction to $S^0_{1}$ is nontrivial.

To construct a new ordering on $X(T^0_1) \otimes_{\Z} \R$ with the
required properties, we let $T^{0v}_2$ denote a maximal
${K_2}_v$-torus of $G_2$ that contains a maximal ${K_2}_v$-split
torus $S^{0v}_{2}.$ There exists $h_2 \in G_2({K_2}_v)$ such that
\begin{equation}\tag{A.2}\label{E:A2}
T_2 = i_{h_2}(T^{0v}_2) \ \ \text{and} \ \ S^v_{2} \subset
i_{h_2}(S^{0v}_{2}),
\end{equation}
where $S^v_{2}$ is the maximal ${K_2}_v$-subtorus of $T_2.$ Since
$\pi$ is defined over $K_2,$ it follows from (\ref{E:A1}) and
(\ref{E:A2}) that $\varphi := i^{-1}_{h_2} \circ \pi \circ i_{h_1}$
has the property $$S^0_{1} \subset \varphi^{-1}(S^{0v}_{2}) =:
\mathcal{S}.$$ 
Lift the old ordering on $X(S^0_{1}) \otimes_{\mathbb{Z}} \mathbb{R}$ first to a coherent ordering 
on $X(\mathcal{S})
\otimes_{\mathbb{Z}} \mathbb{R}$, and then lift the latter to a coherent ordering on 
$X(T^0) \otimes_{\mathbb{Z}} \mathbb{R}$.
 We claim that this
ordering on $X(T^0) \otimes_{\Z} \R$ can be taken for a new
ordering. Indeed, as above, let $\Delta^{0v}_1 \subset \Phi(G_1 ,
T^0_1)$ be the system of simple roots corresponding to the new
ordering, and let $\alpha^{0v}_1 \in \Delta^{0v}_1$ be the root
corresponding to $\alpha^0_1 \in \Delta^0_1$ under the canonical
identification $\Delta^0_1 \simeq \Delta^{0v}_1;$ as we already
mentioned, $\alpha^{0v}_1$ restricts to $S^0_{1}$ nontrivially. By
construction, the system of simple roots $\Delta^{0v}_2 \subset
\Phi(G_2 , T^{0v}_2)$ such that $\varphi^*(\Delta^{0v}_2) =
\Delta^{0v}_1$ corresponds to a coherent choice of orderings on
$X(S^{0v}_{2}) \otimes_{\Z} \R$ and $\alpha^{0v}_2 \in
\Delta^{0v}_2$ such that $\varphi^*(\alpha^{0v}_2) = \alpha^{0v}_1$
restricts to $S^{0v}_{2}$ nontrivially, i.e. is a distinguished
vertex in the Tits index of $G_2/{K_2}_v.$ On the other hand, in the
above notations we have
$$
i^*_{h_1}(\alpha^v_1) = \alpha^{0v}_1, \ \ \pi^*(\alpha^v_2) =
\alpha^v_1 \ \ \text{and} \ \ i^*_{h_2}(\alpha^v_2) = \alpha^{0v}_2.
$$
Thus, $\alpha^v_2 \in \Delta^v_2$ corresponds to a distinguished
vertex in the Tits index of $G_2/{K_2}_v,$ as required. \hfill
$\Box$

\vskip1cm

\bibliographystyle{amsplain}

\bibliographystyle{amsplain}

\end{document}